\newtheorem{theorem}{Theorem}[section]
\newtheorem{corollary}[theorem]{Corollary}
\newtheorem{lemma}[theorem]{Lemma}
\newtheorem{definition}[theorem]{Definition}
\newtheorem{proposition}[theorem]{Proposition}
\newtheorem{remark}[theorem]{Remark}
\title {General Least Gradient Problems with Obstacle}
\author{{Morteza Fotouhi \footnote{Department of Mathematical Sciences, Sharif University of Technology, Tehran, Iran (\href{mailto:fotouhi@sharif.edu}{\tt fotouhi@sharif.edu}).  }
\qquad Amir Moradifam\footnote{Department of Mathematics, University of California, Riverside, California, USA. E-mail: amirm@ucr.edu. Amir Moradifam is supported by NSF grant DMS-1715850. }}}
\date{}
\def\bR{\mathbb{R}}
\def\bQ{\mathbb{Q}}
\def\cA{\mathcal{A}}
\def\cH{\mathcal{H}}
\def\cC{\mathcal{C}}
\def\cK{\mathcal{K}}
\newcommand{\cL}{{\mathcal L}}
\newcommand{\ra}{\rightarrow}
\def\part{\partial}
\def\e{\epsilon}
\def\XXint#1#2#3{{\setbox0=\hbox{$#1{#2#3}{\int}$ }
\vcenter{\hbox{$#2#3$ }}\kern-.6\wd0}}
\begin{document}

\maketitle
{\small \noindent
}
\begin{abstract} 
We study existence, structure, uniqueness and regularity of solutions of the obstacle problem 
\begin{equation*}
\inf_{u\in BV_f(\Omega)}\int_{\bR^n}\phi(x,Du),
\end{equation*}
where $BV_f(\Omega)=\{u\in BV(\Omega): u\geq \psi \text{ in }\Omega\text{ and } u|_{\partial \Omega}=f|_{\partial \Omega}\}$, 
$f \in W^{1,1}_0(\mathbb{R}^n)$, $\psi$ is the obstacle, and $\phi(x,\xi)$ is a convex, continuous and homogeneous function of degree one with respect to the $\xi$ variable. We show that every minimizer of this problem is also a minimizer of the least gradient problem 
\[\inf_{u\in \cA_f(\Omega)}\int_{\bR^n}\phi(x,Du),\]
 where $\cA_f(\Omega)=\{u\in BV(\Omega): u\geq \psi, \text{ and } u=f \text{ in }\Omega^c\}$. Moreover, there exists a vector field $T$ with $\nabla \cdot T  \leq 0$ in $\Omega$ which determines the structure of all minimizers of these two problems, and  $T$ is divergence free on $\{x\in \Omega: u(x)>\psi(x)\}$ for any minimizer $u$. We also present uniqueness and regularity results that are based on maximum principles for minimal surfaces. Since minimizers of the least gradient problems with obstacle do not hit small enough obstacles,  the results presented in this paper extend several results in the literature about least gradient problems without obstacle. 

\end{abstract}
\section{Introduction}
Let $\Omega$ be a bounded open set in $\bR^n$ with Lipschitz boundary and $\phi:\Omega\times\bR^n\longrightarrow\bR$
be a continuous function satisfying the following conditions:

\begin{itemize}
\item[(C1)] There exists $\alpha>0$ such that $\alpha|\xi|\leq\phi(x,\xi)\leq\alpha^{-1}|\xi|$ for all $x\in\Omega$ and $\xi\in\bR^n$.

\item[(C2)] $\xi\mapsto\phi(x,\xi)$ is a norm for every $x$.
\end{itemize}
For our results concerning the regularity of solutions we will also assume the following three additional assumptions

\begin{itemize}

\item[(C3)] $\phi\in W^{2,\infty}_{\text{loc}}$ away from $\{\xi=0\}$, and there exists $C>0$ such that
$$\phi_{\xi_i\xi_j}(x,\xi)p^ip^j\geq C|p'|^2,$$
for all $\xi\in S^{n-1}$ and $p\in\bR^n$, where $p':=p-(p\cdot\xi)\xi$.

\item[(C4)] $\phi$ and $D_\xi\phi$ are $W^{2,\infty}$ away from $\{\xi=0\}$, and there are positive constants $\rho$ and $\lambda$ such that 
\begin{align*}
\phi(x,\xi)&+|D_\xi\phi(x,\xi)|+|D^2_\xi\phi(x,\xi)|+|D^3_\xi\phi(x,\xi)|+\rho|D_xD_\xi\phi(x,\xi)|\\
&+\rho|D_xD^2_\xi\phi(x,\xi)|+\rho^2|D_x^2D_\xi\phi(x,\xi)|\leq\lambda, \qquad\text{for all }x\in\Omega, \xi\in S^{n-1}.
\end{align*}

\item[(C5)] For the result of regularity we need to assume that the integrand $\phi(x,\xi)=\phi(\xi)$ is independent of $x$. \\

\end{itemize}

It is elementary to verify that if $\phi:\Omega\times\bR^n\longrightarrow\bR$ satisfies C1-C4, then for every $p,q \in \bR^n$ and $\lambda\in\bR$ we have
\begin{equation}\label{norm-property}
\phi_\xi(x,\lambda p)=\phi_\xi(x,p), \ \ \hbox{and} \ \ p\cdot\phi_\xi(x,p)=\phi(x,p).
\end{equation}
For  $u\in BV_{\emph{loc}}(\bR^n)$, let $\phi(x,Du)$ denote the measure defined by 
\begin{equation*}
\int_A\phi(x,Du)=\int_A\phi(x,\nu^u(x))|Du|\quad\text{for any bounded Borel set }A,
\end{equation*}
where $|Du|$ is the total variation measure associated to the vector-valued measure $Du$, and $\nu^u$ is the Radon-Nikodym derivative $\nu^u(x)=\frac{d\,Du}{d\,|Du|}$.
Basic facts about $BV$ functions imply that if $U$ is an open set, then 
\begin{equation}\label{convexity}
\int_U\phi(x,Du)=\sup\big\{\int_Uu\nabla\cdot Ydx:Y\in C_c^\infty(U;\bR^n),\;\sup\phi^0(x,Y(x))\leq1\big\},
\end{equation}
where $\phi^0(x,\cdot)$ denotes the norm on $\bR^n$ dual to $\phi(x,\cdot)$, defined by
\begin{equation*}
\phi^0(x,\xi):=\sup\{\xi\cdot p:\phi(x,p)\leq1\},
\end{equation*}
(see \cite{Amar-Bellettini, Jerrard-Moradifam-Ardian-2018}). 
For $u\in BV(\Omega)$, $\int_\Omega\phi(x,Du)$ is called the $\phi$-total variation of $u$ in $\Omega$. 
Also, if $A,\, E$ are subsets of $\bR^n$, with $A$ Borel and $E$ having finite perimeter, then we shall write $P_\phi(E;A)$ to denote the $\phi$-perimeter of $E$ in $A$, defined by
\begin{equation*}
P_\phi(E;A):=\int_A\phi(x,D\chi_E),
\end{equation*}
where $\chi_E$ is the characteristic function of $E$. We will also write $P_\phi(E)$ to denote $P_\phi(E;\bR^n)$. We shall need the following lemma. 
\begin{lemma}[Lemma 2.2 in  \cite{Jerrard-Moradifam-Ardian-2018}]\label{perimeter inequality}
Let $A\subset\bR^n$ be a Borel set and $E_1,\,E_2\subset\bR^n$ be of locally finite perimeter with respect $\phi$. Then
$$P_\phi(E_1\cup E_2;A)+P_\phi(E_1\cap E_2;A)\leq P_\phi(E_1;A)+P_\phi(E_2;A).$$
\end{lemma}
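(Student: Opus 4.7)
The approach is to reduce the submodularity inequality for characteristic functions of finite-$\phi$-perimeter sets to a pointwise identity that holds for smooth functions, and then pass to the limit using the lower semicontinuity of $u\mapsto\int\phi(x,Du)$ furnished by the dual representation \eqref{convexity}. The pointwise fact I would rely on is that, for any smooth $u,v:\bR^n\to\bR$,
\[
\phi(x,\nabla(u\vee v))+\phi(x,\nabla(u\wedge v))=\phi(x,\nabla u)+\phi(x,\nabla v)\qquad\text{a.e.}
\]
This is actually an equality: on $\{u>v\}$ one has $\nabla(u\vee v)=\nabla u$ and $\nabla(u\wedge v)=\nabla v$, on $\{u<v\}$ the roles are swapped, and on $\{u=v\}$ the classical fact that a $C^1$ function has vanishing gradient almost everywhere on its zero set forces $\nabla u=\nabla v$ a.e., so the four gradients all coincide there.

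Next I would set $u_k:=\chi_{E_1}*\rho_k$ and $v_k:=\chi_{E_2}*\rho_k$ for a standard mollifier $\rho_k$, apply the pointwise identity, and integrate over an open set $U$. Because $\chi_{E_1}\vee\chi_{E_2}=\chi_{E_1\cup E_2}$, $\chi_{E_1}\wedge\chi_{E_2}=\chi_{E_1\cap E_2}$, and the lattice operations are Lipschitz in their arguments, $u_k\vee v_k\to\chi_{E_1\cup E_2}$ and $u_k\wedge v_k\to\chi_{E_1\cap E_2}$ in $L^1_{\text{loc}}$. For $U$ open with $P_\phi(E_i;\partial U)=0$, lower semicontinuity from \eqref{convexity} gives
\[
P_\phi(E_1\cup E_2;U)+P_\phi(E_1\cap E_2;U)\le\liminf_{k\to\infty}\int_U\bigl(\phi(x,\nabla(u_k\vee v_k))+\phi(x,\nabla(u_k\wedge v_k))\bigr)\,dx,
\]
and the pointwise identity reduces this to $\liminf_k\int_U(\phi(x,\nabla u_k)+\phi(x,\nabla v_k))\,dx=P_\phi(E_1;U)+P_\phi(E_2;U)$ by the standard strict convergence of BV mollifications.

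The main technical obstacle is extending the inequality from such nice open $U$ to an arbitrary Borel set $A$; this I would handle via the outer regularity of the four Radon measures $P_\phi(E_1\cup E_2;\cdot)$, $P_\phi(E_1\cap E_2;\cdot)$, $P_\phi(E_1;\cdot)$, $P_\phi(E_2;\cdot)$, approximating $A$ from outside by a decreasing sequence of open sets whose boundaries carry zero $\phi$-perimeter for both $E_1$ and $E_2$ (generically achievable by slight shifting), applying the already-proved inequality on each, and letting the approximation parameter tend to zero.
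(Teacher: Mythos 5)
Your proposal cannot be measured against a proof in the paper, because the paper contains none: the lemma is quoted, proof and all, from Lemma 2.2 of \cite{Jerrard-Moradifam-Ardian-2018} (hence its title), and the authors never reprove it. Taken on its own terms, your smoothing argument is the classical self-contained route to submodularity of anisotropic perimeter, and its core is sound: the a.e.\ identity $\phi(x,\nabla(u\vee v))+\phi(x,\nabla(u\wedge v))=\phi(x,\nabla u)+\phi(x,\nabla v)$ for smooth $u,v$ is correct (on $\{u=v\}$ the gradients of $u$, $v$, $u\vee v$, $u\wedge v$ all agree a.e.), the mollifications of the characteristic functions converge in $L^1_{\mathrm{loc}}$ to the right limits, and lower semicontinuity follows from \eqref{convexity} exactly as you say.

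Two steps need repair before this is a complete proof. First, ``standard strict convergence of BV mollifications'' gives $\int_U|\nabla u_k|\,dx\to|D\chi_{E_1}|(U)$ when $|D\chi_{E_1}|(\partial U)=0$, but for an $x$-dependent anisotropic integrand this does not by itself yield convergence of $\int_U\phi(x,\nabla u_k)\,dx$; you must either invoke Reshetnyak's continuity theorem (applicable since $\phi$ is continuous and $1$-homogeneous in $\xi$) or, more elementarily, use Jensen's inequality $\phi(x,\nabla u_k(x))\le\int\rho_k(x-y)\,\phi(x,\nu_{E_1}(y))\,d|D\chi_{E_1}|(y)$, Fubini, and the modulus of continuity of $\phi(\cdot,\xi)$ to get $\limsup_k\int_U\phi(x,\nabla u_k)\,dx\le P_\phi(E_1;\overline U)$, which is all you actually need. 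Second, and more seriously, the Borel extension as you describe it fails: for a general Borel $A$ one cannot find open $U\supset A$ with negligible boundary and $\mu(U)$ close to $\mu(A)$. Concretely, take $E_1=E_2=B$ a ball and $A=B$: any open $U\supset B$ whose boundary is $P_\phi(B;\cdot)$-negligible must contain $\cH^{n-1}$-almost all of $\partial B$, so $\mu(U)\ge P_\phi(B)>0=\mu(A)$, and no outer approximation of the kind you want exists. The standard fix reverses the order of operations: first extend from negligible-boundary open sets to \emph{all} open sets $V$ by the inner exhaustion $V_t=\{x\in V:\mathrm{dist}(x,\partial V)>t\}\cap B(0,1/t)$ (all but countably many $t$ give negligible boundaries) together with continuity of measures from below; then pass to arbitrary Borel $A$ by plain outer regularity of the Radon measures $P_\phi(E_1\cup E_2;\cdot)+P_\phi(E_1\cap E_2;\cdot)$ and $P_\phi(E_1;\cdot)+P_\phi(E_2;\cdot)$, which at that stage requires no boundary condition at all. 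With these two repairs your argument is complete. You might also note the alternative proof that avoids limits entirely: Federer's theorem and the standard decompositions of $D\chi_{E_1\cup E_2}$ and $D\chi_{E_1\cap E_2}$ identify the deficit in the inequality as $\int_{A\cap\{\nu_{E_1}=-\nu_{E_2}\}}\bigl(\phi(x,\nu_{E_1})+\phi(x,\nu_{E_2})\bigr)\,d\cH^{n-1}\ge0$, which gives the statement for every Borel $A$ in one stroke, at the price of using the structure theory of reduced boundaries that your approach avoids.
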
 

\begin{definition}  We say that a function $u\in BV(\bR^n)$ is a $\phi$-total variation minimizing in a set $\Omega\subset\bR^n$ if 
\begin{equation*}
\int_{\bR^n}\phi(x,Du)\leq\int_{\bR^n}\phi(x,Dv)\text{ for all }v\in BV(\bR^n)\text{ such that }u=v\text{ a.e. in }\Omega^c.
\end{equation*}
Similarly, we say that $E\subset\bR^n$ of finite perimeter is $\phi$-area minimizing in $\Omega$ if 
\begin{equation*}
P_\phi(E)\leq P_\phi(F)\text{ for all }F\subset\bR^n\text{ such that }F\cap\Omega^c=E\cap\Omega^c\text{ a.e.}.
\end{equation*}
Moreover,  $E\subset\bR^n$ is called $\phi$-super (sub) area minimizing  in $\Omega$, if 
\begin{equation*}
P_\phi(E)\leq P_\phi(E\cup F)\,(\text{respectively } P_\phi(E)\leq P_\phi(E\cap F))
\end{equation*}
 for all $F\subset\bR^n$ such that $F\cap\Omega^c=E\cap\Omega^c$  almost everywhere.
\end{definition}

Let $f \in  BV(\mathbb{R}^n)$ and $\psi \in W^{1,1}(\Omega)$, and consider the obstacle least gradient problem
\begin{equation}\label{obstacle-problem-BC}
\inf_{u\in BV_f}\int_{\Omega}\phi(x,Du),
\end{equation}
where
$$BV_f(\Omega):=\{u\in BV(\Omega): u\mid_{\partial \Omega}=f \ \ \hbox{and}\ \  u(x)\geq \psi (x)   \text{ for a.e. }x\in \Omega\}.$$

Functions of least gradient was first studies by P. Sternberg, W. Graham, and W.  Ziemer in \cite{Strenberg-Williams-Ziemer}, and the results were later extended to least gradient problems with obstacle in \cite{ZZ}. Due to important applications of least gradient problems in conductivity imaging, such problems have received an extensive attention in the past decade (see \cite{Gorny, Jerrard-Moradifam-Ardian-2018, Mazon-Rossi-Segura, Moradifam, Moradifam1, MNT, MNTim, NTT07, NTT08, NTT10, Zuniga}). In this paper we will study existence and structure of minimizers, uniqueness, and regularity of minimizers of the general obstacle least gradient problem \eqref{obstacle-problem-BC}. Since minimizers of the least gradient problems with obstacle do not hit small enough obstacles,  the results in this paper extend and unify several results in the literature about least gradient problems without obstacle.

In general the problem \eqref{obstacle-problem-BC} may not have a minimizer  (see \cite{Jerrard-Moradifam-Ardian-2018}, \cite{Mazon-Rossi-Segura}, \cite{Strenberg-Williams-Ziemer}). However the relaxed problem 
\begin{equation}\label{obstacle-problem0}
\min_{u\in \cA_f} \left( \int_{\Omega}\phi(x,Du)+\int_{\partial \Omega}\varphi(x,\nu_{\Omega})|u-f| \right),
\end{equation}
always has a solution, where $\cA_f=\{u\in BV(\bR^n): u\geq \psi, \text{ and } u=f\text{ in }\Omega^c\}$, and $\nu_\Omega$ is the outer pointing unit normal vector on $\partial \Omega$. Indeed let $\{v_n\}_{n=1}^{\infty}$ be a minimizing sequence for 
\begin{equation}\label{F}
F(v):=\int_{\mathbb{R}^n}\varphi(x,dv).
\end{equation}
 Since $BV(\mathbb{R}^n)\hookrightarrow L^1_{loc}$, $F$ is coercive in $BV(\mathbb{R}^n)$ (a consequence of $C_1$) and weakly lower semicontinuous (see \cite{Jerrard-Moradifam-Ardian-2018} for more details), it follows from standard arguments that $\{v_n\}_{n=1}^{\infty}$ has a subsequence converging strongly in $L^1_{loc}$ to a function $v\in \cA_f$ with 
\[\int_{\mathbb{R}^n}\varphi(x, Du) \leq \inf_{v\in BV_f(\Omega)} \int_{\mathbb{R}^n} \varphi(x, Dv),\]
and hence $v$ is also a minimizer of \eqref{obstacle-problem0}.  However, in general, the trace $v |_{\partial \Omega}$ on $\partial \Omega$ may not be equal to $f$, leading to possible nonexistence for the problem \eqref{obstacle-problem-BC}. In addition, we shall prove the the following result.\\

\begin{remark} Since $u|_{\partial \Omega}=f$ for every $u\in BV_f(\Omega)$, the compatibility condition $f \geq \psi$ on $\partial \Omega$ must be satisfied.  Every $f\in L^1(\partial \Omega)$ can be extended to a function in $BV(\mathbb{R}^n)$ (denoted by $f$ again) with $f \geq \psi$ in $\Omega$, and throughout the paper we shall naturally assume that $f\geq \psi $ in $\bar{\Omega}$.

\end{remark}

\begin{proposition}\label{relationProp}
Let $\Omega$ be a bounded open set in $\bR^n$ with Lipschitz boundary, $f \in  BV(\mathbb{R}^n)$ and $\psi \in W^{1,1}(\Omega)$ with $f \geq \psi$ in $\Omega$, and $\phi:\Omega\times\bR^n\longrightarrow\bR$
be a continuous function satisfying C1-C2. Then \eqref{obstacle-problem0} has a solution and
\[ \inf_{u\in BV_f}\int_{\Omega}\phi(x,Du)= \min_{u\in \cA_f} \left( \int_{\Omega}\phi(x,Du)+\int_{\partial \Omega}\varphi(x,\nu_{\Omega})|u-f| \right).\]
In particular, every minimizer of \eqref{obstacle-problem-BC} is also a minimizer of \eqref{obstacle-problem0}. 
\end{proposition}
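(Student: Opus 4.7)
The plan has three pieces: (i) existence for \eqref{obstacle-problem0} via the direct method, (ii) the easy inequality obtained by extending elements of $BV_f(\Omega)$ by $f$ outside $\Omega$, and (iii) the reverse inequality obtained by a collar-interpolation construction. For existence, the argument sketched in the excerpt suffices: a minimizing sequence is $BV(\bR^n)$-bounded (by coercivity from C1 together with $u=f$ in $\Omega^c$), has an $L^1_{\text{loc}}$ limit lying in $\cA_f$ (both constraints pass to the limit), and $F$ is lower semicontinuous by \eqref{convexity}. Moreover, for $v \in \cA_f$ the standard BV jump decomposition
\[
\int_{\bR^n}\phi(x,Dv) = \int_\Omega\phi(x,Dv) + \int_{\partial\Omega}\phi(x,\nu_\Omega)\,|v^- - f|\, dH^{n-1} + \int_{\bR^n \setminus \bar\Omega}\phi(x,Df)
\]
shows that minimizing $F$ over $\cA_f$ is equivalent to \eqref{obstacle-problem0} up to an additive constant.

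The inequality $\min_{\cA_f}(\cdots) \leq \inf_{BV_f}\int_\Omega \phi(x,Du)$ is immediate: for $u \in BV_f(\Omega)$, the extension $\tilde u$ by $f$ outside $\Omega$ lies in $\cA_f$, has vanishing boundary jump (its inner trace already equals $f$), and the same interior $\phi$-total variation, so its value in \eqref{obstacle-problem0} equals $\int_\Omega \phi(x,Du)$.

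For the reverse inequality, given $u \in \cA_f$ I would construct an approximating sequence in $BV_f(\Omega)$ by interpolating $u$ to $f$ in a thin collar. Let $d(x) := \mathrm{dist}(x,\partial\Omega)$, $U_\delta := \{x \in \Omega : d(x)<\delta\}$, and $\eta_\delta(s) := \max(1 - s/\delta, 0)$, and set
\[
v_\delta(x) := \eta_\delta(d(x))\, f(x) + \bigl(1 - \eta_\delta(d(x))\bigr)\, u(x).
\]
Then $v_\delta \in BV_f(\Omega)$ (its inner trace is $f$) and $v_\delta \geq \psi$ holds automatically since $v_\delta$ is a pointwise convex combination of $u, f \geq \psi$. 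A BV chain-rule computation yields
\[
\int_\Omega \phi(x, Dv_\delta) \leq \int_{\Omega \setminus U_\delta}\!\phi(x,Du) + \int_{U_\delta}\!\bigl(\phi(x,Du) + \phi(x,Df)\bigr) + \frac{1}{\delta}\int_{U_\delta}\!|u-f|\,\phi(x,\nabla d)\,dx.
\]
As $\delta \to 0$, the first term tends to $\int_\Omega \phi(x,Du)$, the second vanishes since $|Du|,|Df|$ are finite Radon measures and $U_\delta \downarrow \emptyset$ in $\Omega$, and by the BV trace theorem combined with the coarea formula,
\[
\frac{1}{\delta}\int_{U_\delta} |u-f|\,\phi(x,\nabla d)\, dx \;\longrightarrow\; \int_{\partial\Omega} |u^- - f|\,\phi(x,\nu_\Omega)\, dH^{n-1}
\]
(using $\nabla d = -\nu_\Omega$ on $\partial\Omega$ and the fact that $\phi(x,\cdot)$ is a norm).

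The main technical hurdle is making the collar computation rigorous when $u,f$ are only $BV$, so $Du,Df$ may carry singular parts: both the BV chain rule for $v_\delta$ and the slicewise trace convergence require care, but are standard. The obstacle constraint creates no additional trouble. Combining the two inequalities gives the claimed equality of values, and applying the easy extension to any minimizer $u^* \in BV_f(\Omega)$ of \eqref{obstacle-problem-BC} produces $\tilde u^* \in \cA_f$ with value $\min_{\cA_f}(\cdots)$, hence a minimizer of \eqref{obstacle-problem0}.
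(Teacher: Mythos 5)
Your proposal is correct in outline, but it proves the central identity by a genuinely different route from the paper. The paper obtains the equality as a byproduct of convex duality: in the proof of Theorem \ref{structure} it forms the Fenchel dual $(P^*)$, invokes Theorem III.4.1 of \cite{Ekeland-Temam} to get a dual solution $T$ and zero duality gap, and the chain of inequalities built around $T$ shows that every $w\in\cA_f$ has relaxed energy at least $\sup(P^*)=\inf(P)$; combined with the trivial embedding of $BV_f(\Omega)$ into $\cA_f$ (your ``easy direction'', identical in both arguments) this yields \eqref{this}, from which the Proposition is quoted. You instead prove the hard inequality constructively, via the collar interpolation $v_\delta=\eta_\delta(d)\,f+(1-\eta_\delta(d))\,u$. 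The convex-combination form is exactly the right device here: it is what keeps the competitor above the obstacle (using the hypothesis $f\ge\psi$ in $\Omega$), which the classical additive trace-correction construction behind the no-obstacle relaxation formula would destroy. As for what each approach buys: the duality machinery is needed by the paper anyway for Theorem \ref{structure}, so the Proposition comes for free there; but that machinery is set up for $f\in W^{1,1}_0(\bR^n)$ (it uses $\nabla f\in L^1(\Omega)$ and $\cK\subset W^{1,1}_0(\Omega)$), whereas your argument is self-contained, elementary, and works directly under the Proposition's stated hypothesis $f\in BV(\bR^n)$.

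One caution: the step you label ``standard'' is the only place where real work remains, and on a merely Lipschitz boundary it is more delicate than your phrasing suggests. The limit
\[
\frac1\delta\int_{U_\delta}|u-f|\,\phi(x,\nabla d)\,dx\;\longrightarrow\;\int_{\partial\Omega}\phi(x,\nu_\Omega)\,|u^--f|\,d\cH^{n-1}
\]
requires, besides the $L^1$ collar convergence of BV traces, the facts that $\nabla d(x)$ coincides for a.e.\ $x$ in the collar with $-\nu_\Omega$ at the corresponding foot point (true where $d$ is differentiable and $\partial\Omega$ has a measure-theoretic normal there; note you also implicitly use $\phi(x,-\xi)=\phi(x,\xi)$, which holds since $\phi(x,\cdot)$ is a norm by C2), and that the normalized collar volume converges to $\cH^{n-1}(\partial\Omega)$, a Minkowski-content statement. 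All of this is true for Lipschitz domains, but it should be either proved in local graph coordinates or attributed: the relaxation formula you are reconstructing is precisely the no-obstacle statement available in \cite{Amar-Bellettini} and \cite{Mazon-Rossi-Segura}. With that reference (or a proof of the weighted collar limit) supplied, your argument is complete.
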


Indeed in order to prove existence of solutions to \eqref{obstacle-problem-BC} we need a condition on $\Omega$ which is defined as  follows.
\begin{definition}\label{barrier}
Let $\Omega\subset\bR^n$ be a bounded Lipschitz domain and $\phi:\Omega\times\bR^n\longrightarrow\bR$ is continuous function that satisfies C1-C2. We say that $\Omega$ satisfies the barrier condition if for $x_0\in\partial\Omega$ and $\epsilon>0$ sufficiently small, if $V$ minimizes $P_\phi(\cdot\,;\bR^n)$ in 
\begin{equation*}
\{W\subset\Omega: W\setminus B(\epsilon,x_0)=\Omega \setminus B(\epsilon,x_0)\},
\end{equation*}
then
\begin{equation*}
\partial V^{(1)}\cap\partial\Omega\cap B(\epsilon,x_0)=\varnothing.
\end{equation*}
\end{definition}

\begin{remark}
Intuitively, if  $\Omega$ satisfies the barrier condition, then at every point on $\partial \Omega$ one can decrease the perimeter of $\partial \Omega$ by pushing the boundary inwards.  
In \cite{Jerrard-Moradifam-Ardian-2018}, 
a convenient interpretation of the barrier condition, when $\partial\Omega$ is sufficiently smooth, is provided:
\begin{equation}\label{barrier-condition}
-\sum_{i=1}^n\partial_{x_i}\phi_{\xi_i}(x,Dd(x))>0,\quad\text{on a dense subset of }\partial\Omega,
\end{equation}
where $d(\cdot)$ is the signed distance to $\partial\Omega$ by 
$$d(x):=\left\{\begin{array}{ll}
\emph{dist}(x,\partial\Omega),&\text{if }x\in\Omega,\\
-\emph{dist}(x,\partial\Omega),&\text{if not.}
\end{array}\right.$$
\end{remark}

We will show that if $\Omega$ satisfies the barrier condition, then every solution of  \eqref{obstacle-problem0} is also a solution of \eqref{obstacle-problem-BC}.

\begin{theorem}\label{Existence}
Suppose that $\phi:\Omega\times\bR^n\longrightarrow\bR$ is a continuous function that satisfies C1-C2 in a bounded Lipschitz domain $\Omega\subset\bR^n$,  and $f\in C(\partial\Omega)$ with $f\geq \psi$. If $\Omega$ satisfies the barrier condition with respect to $\phi$, then every solution of \eqref{obstacle-problem0} is also a solution of \eqref{obstacle-problem-BC}. In particular, \eqref{obstacle-problem-BC} has a solution. 
\end{theorem}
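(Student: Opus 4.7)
The plan is to prove that every minimizer $u$ of \eqref{obstacle-problem0} attains the boundary data in the trace sense, i.e.\ $u|_{\partial\Omega}=f|_{\partial\Omega}$. Once this is established, Proposition \ref{relationProp} identifies $u$ as a minimizer of \eqref{obstacle-problem-BC}, and since \eqref{obstacle-problem0} is known to admit a solution, existence for \eqref{obstacle-problem-BC} follows immediately.

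First I would pass to super-level sets. Extending $u$ by $f$ outside $\Omega$ and setting $E_t:=\{u>t\}$, the definition of $\cA_f$ yields $E_t\cap\Omega^c=\{f>t\}\cap\Omega^c$, while the obstacle $u\geq\psi$ yields $E_t\supset\{\psi>t\}\cap\Omega$. Using the BV coarea representation of $\int_{\bR^n}\phi(x,Du)$ together with Lemma \ref{perimeter inequality} (which ensures that replacing a competitor $F$ by $F\cup E_t$ both preserves the obstacle constraint and does not increase the combined $\phi$-perimeter), a standard slicing argument shows that for $\mathcal{L}^1$-a.e.\ $t$ the set $E_t$ is a $\phi$-area minimizer in the class
\[\cK_t:=\{F\subset\bR^n\;:\;F\cap\Omega^c=\{f>t\}\cap\Omega^c\text{ and }F\supset\{\psi>t\}\cap\Omega\}.\]

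Next, I would argue by contradiction. Suppose the interior trace of $u$ at some $x_0\in\partial\Omega$ differs from $f(x_0)$. Consider first the case $\mathrm{tr}\,u(x_0)>f(x_0)$. By continuity of $f$, pick $t\in(f(x_0),\mathrm{tr}\,u(x_0))$ of an appropriate measure-theoretic type, so that $E_t$ has density one at $x_0$ from inside $\Omega$ and density zero from outside. Then for sufficiently small $\epsilon>0$, $E_t\cap\Omega$ essentially coincides with $\Omega\cap B(\epsilon,x_0)$. Applying Definition \ref{barrier}, the $\phi$-perimeter minimizer $V$ among $\{W\subset\Omega:W\setminus B(\epsilon,x_0)=\Omega\setminus B(\epsilon,x_0)\}$ satisfies $\partial V^{(1)}\cap\partial\Omega\cap B(\epsilon,x_0)=\varnothing$ and therefore has strictly smaller $\phi$-perimeter than $\Omega$. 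Replacing $E_t\cap B(\epsilon,x_0)$ by $V\cap B(\epsilon,x_0)$ yields a competitor in $\cK_t$ with strictly smaller $\phi$-perimeter, contradicting the area-minimality of $E_t$. The opposite case $\mathrm{tr}\,u(x_0)<f(x_0)$ is handled symmetrically by applying the barrier condition to the complement $\Omega\setminus E_t$, using the symmetry $\phi(x,\xi)=\phi(x,-\xi)$ of norms.

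The main obstacle will be verifying that the barrier perturbation is compatible with the obstacle, i.e.\ that $V\cap B(\epsilon,x_0)\supset\{\psi>t\}\cap B(\epsilon,x_0)\cap\Omega$. This is exactly where the hypothesis $f\geq\psi$ in $\bar\Omega$ is essential: since $\psi(x_0)\leq f(x_0)<t$, by integrability of $\psi$ together with an appropriate choice of $t$ from a positive-measure subset of $(f(x_0),\mathrm{tr}\,u(x_0))$, one can arrange that $\{\psi>t\}$ has vanishing density at $x_0$, so the piece removed by the barrier perturbation lies essentially outside $\{\psi>t\}$ and the new competitor remains in $\cK_t$. With this technical point resolved, the contradiction obtained above forces $u|_{\partial\Omega}=f|_{\partial\Omega}$, completing the proof.
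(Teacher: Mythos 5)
Your high-level strategy is the same as the paper's: show that super-level sets of a minimizer of \eqref{obstacle-problem0} solve the obstacle $\phi$-area minimizing problem, use the barrier condition to rule out the level-set boundaries clinging to $\partial\Omega$ away from the obstacle, deduce $u|_{\partial\Omega}=f$, and close with Proposition \ref{relationProp}. The gap is in the execution of the barrier-condition step. From ``$E_t$ has density one at $x_0$ from inside $\Omega$ and density zero from outside'' you infer that ``for sufficiently small $\epsilon>0$, $E_t\cap\Omega$ essentially coincides with $\Omega\cap B(\epsilon,x_0)$,'' and you then swap $E_t\cap B(\epsilon,x_0)$ for $V\cap B(\epsilon,x_0)$ and compare perimeters against that of $\Omega$. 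Density one at the single point $x_0$ does not give coincidence on any ball: $E_t$ may omit holes accumulating at $x_0$ whose relative measure tends to zero, in which case the swap modifies $E_t$ on a region you do not control, and the strict inequality $P_\phi(V)<P_\phi(\Omega)$ furnished by the barrier condition no longer implies that the new competitor beats $E_t$. This is precisely the delicate point that the paper does not argue directly but isolates as Lemma \ref{boundary of superset} (adapted from Lemma 3.4 of \cite{Jerrard-Moradifam-Ardian-2018}): for a solution $E$ of \eqref{variational problem}, any point of $\partial\Omega\cap\partial E^{(1)}$ near which $\partial E^{(1)}\subset\bar\Omega$ must lie in $\bar O$. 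Its proof is a cut-and-paste argument exploiting the submodularity inequality (Lemma \ref{perimeter inequality}) together with the minimality of \emph{both} $E_t$ and the barrier minimizer $V$, rather than a direct comparison of $E_t$ with $\Omega\cap B(\epsilon,x_0)$. Without this lemma (or a complete proof of it), your contradiction step does not go through.

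Two further points. First, your treatment of the obstacle constraint is both too weak and more complicated than necessary: ``vanishing density'' of $\{\psi>t\}$ at $x_0$ does not guarantee that the modified set still contains $O_t\cap\Omega$, since the constraint must hold up to measure zero, not up to small density. What actually holds is stronger and simpler: since $\psi\leq f$ a.e.\ in $\Omega$, $f$ is continuous on $\partial\Omega$, and $f(x_0)<t$, one gets $\psi\leq f<t$ a.e.\ on $B(\epsilon,x_0)\cap\Omega$ for small $\epsilon$, hence $|O_t\cap B(\epsilon,x_0)|=0$; no special choice of $t$ or integrability of $\psi$ is needed. (In the symmetric case $\mathrm{tr}\,u(x_0)<f(x_0)$ the modification enlarges $E_t$, so the constraint $E_t\supset O_t\cap\Omega$ is preserved automatically and no control of $\psi$ near $x_0$ is required.) Second, your Step 1 — minimality of $E_t$ for a.e.\ $t$ by ``a standard slicing argument'' from the coarea formula — is not routine as stated, because competitors for different levels cannot be directly reassembled into a single $BV$ competitor for $u$. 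The paper proves the stronger, every-$t$ statement (Theorem \ref{minimal-superset}) by truncation combined with the stability result Lemma \ref{stability of solution}; you should invoke or reproduce that argument instead.
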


We shall also prove that there exists a fixed vector field $T$ that determines the structure of level sets of the minimizers of \eqref{obstacle-problem-BC} and \eqref{obstacle-problem0}. 

\begin{theorem}\label{structure}
Let $\Omega\subset\bR^n$ be a bounded Lipschitz domain and $\phi:\Omega\times\bR^n\longrightarrow\bR$ is continuous function that satisfies C1-C2, and $f\in W^{1,1}_0(\bR^n)$. Then there exists a vector field $T\in (\cL^\infty(\Omega))^n$ with $\phi^0(x,T)\leq1$ a.e. in $\Omega$,  and $\nabla \cdot T  \leq 0$  such that 
\begin{equation}\label{first}
\phi(x,\frac{Dw}{|Dw|})=T\cdot\frac{Dw}{|Dw|},\quad |Dw|-\text{a.e. in }\Omega,
\end{equation}
\begin{equation}\label{second}
\phi(x,\nu_\Omega)|f-w|=[T,(f-w)\nu_\Omega],\quad\cH^{n-1}-\text{a.e. in }\partial\Omega \cap \{w>\psi\},
\end{equation}
for every minimizer $w$ of \ \eqref{obstacle-problem-BC} or \eqref{obstacle-problem0}. Moreover $T$ is divergence-free in $\{x\in \Omega: w(x)>\psi(x)\}$. 
\end{theorem}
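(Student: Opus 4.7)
The plan is to construct $T$ as a maximizer of an appropriate dual problem via convex duality, extending the strategy of \cite{Jerrard-Moradifam-Ardian-2018} to account for the obstacle. Thanks to Proposition \ref{relationProp}, any minimizer $w$ of \eqref{obstacle-problem-BC} or \eqref{obstacle-problem0} is also a minimizer of
\[
m := \min_{u \in \cA_f,\ u \geq \psi}\int_{\mathbb{R}^n}\phi(x, Du),
\]
because the boundary penalty in \eqref{obstacle-problem0} is exactly the $\phi$-mass of $Du$ carried on $\partial\Omega$ once one extends $u$ by $f$ on $\Omega^c$. I would take this as the primal problem.

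I would then introduce the dual problem
\[
\mathcal{D} := \sup_{T \in \cK} G(T),\qquad \cK := \bigl\{ T \in L^\infty(\mathbb{R}^n)^n : \phi^0(x, T) \leq 1 \text{ a.e.\ and } \nabla\cdot T \leq 0 \text{ in } \Omega \bigr\},
\]
with $G(T) := -\int_\Omega \psi\,\diff(\nabla\cdot T) - \int_{\Omega^c} f\,\diff(\nabla\cdot T)$, interpreted through the Anzellotti pairing. Weak duality $\mathcal{D}\leq m$ is then transparent: for any admissible $u$ and $T$,
\[
\int_{\mathbb{R}^n}\phi(x, Du) \geq \int_{\mathbb{R}^n} T\cdot Du = -\int_{\mathbb{R}^n} u\,\diff(\nabla\cdot T) \geq G(T),
\]
the first step being the definition of $\phi^0$, the equality being Anzellotti integration by parts (made clean by $u = f \in W^{1,1}_0$ on $\Omega^c$), and the last inequality using $u \geq \psi$ in $\Omega$ together with $-\nabla\cdot T \geq 0$.

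The main technical step is strong duality $\mathcal{D} = m$ together with attainment by some $T^{\ast} \in \cK$, which I would prove by a Fenchel-Rockafellar-type argument; existence of $T^{\ast}$ then follows from weak-$\ast$ compactness of the unit ball in $L^\infty$ and closedness of the distributional inequality $\nabla\cdot T \leq 0$. With $T^{\ast}$ in hand, the three conclusions are obtained by tracking equality in the weak-duality chain applied to any minimizer $w$. Equality in the first inequality forces $\phi(x,\nu^w) = T^{\ast}\cdot\nu^w$ pointwise $|Dw|$-a.e., giving \eqref{first}. Equality in the last inequality, combined with $w\geq\psi$ and $-\nabla\cdot T^{\ast}\geq 0$, forces the complementary-slackness identity $(w-\psi)\,\diff(\nabla\cdot T^{\ast})=0$, hence $\nabla\cdot T^{\ast} = 0$ on $\{w>\psi\}$. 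Universality across minimizers is automatic, since $T^{\ast}$ is selected by a dual problem that does not see the particular $w$. For the boundary relation \eqref{second} I would localize the Anzellotti identity to $\partial\Omega\cap\{w>\psi\}$ and identify $[T^{\ast},(f-w)\nu_\Omega]$ with $\phi(x,\nu_\Omega)|f-w|$ using the trace properties of the pairing.

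The hard part will be establishing strong duality in the obstacle setting: embedding the unilateral constraint $u\geq\psi$ into a Fenchel-Rockafellar framework so that its Lagrange multiplier emerges as the distributional inequality $\nabla\cdot T\leq 0$, and verifying a qualification condition compatible with the BV/$L^\infty$ topology. A secondary subtlety is defining $G(T)$ when $\nabla\cdot T$ is only a distribution; I would resolve this by approximating $\psi \in W^{1,1}(\Omega)$ and $f \in W^{1,1}_0(\mathbb{R}^n)$ and integrating by parts against them.
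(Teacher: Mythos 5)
Your overall strategy --- convex duality producing a single dual vector field $T$ with $\phi^0(x,T)\le1$ and $\nabla\cdot T\le0$, then equality-tracking/complementary slackness to obtain \eqref{first}, \eqref{second} and the divergence-free property on $\{w>\psi\}$ --- is the same as the paper's. The genuine gap is in \emph{which} problem you dualize, and it is not cosmetic. You take as primal the relaxed problem $m=\min_{u\in\cA_f}\int_{\bR^n}\phi(x,Du)$, so your strong duality gives $\mathcal{D}=m$, i.e.\ the dual value equals the value of the \emph{smallest} of the problems in play. That suffices to run equality-tracking for minimizers of \eqref{obstacle-problem0}, but it says nothing about whether $\inf_{BV_f}\int_\Omega\phi(x,Du)$ equals $m$ (up to the constant $\int_{\Omega^c}\phi(x,\nabla f)\,dx$); a priori there could be a relaxation gap, in which case a minimizer of \eqref{obstacle-problem-BC} would fail to minimize $m$ and your argument would not apply to it. To bridge this you invoke Proposition \ref{relationProp} --- but in this paper Proposition \ref{relationProp} is not an independent input: its proof \emph{is} the identity \eqref{this}, which is established inside the proof of Theorem \ref{structure}. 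So as written your argument is circular, unless you supply an independent proof of the relaxation identity in the obstacle setting (the standard boundary-layer approximations must be made to respect $u\ge\psi$), which you do not sketch.

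The paper avoids this by dualizing the \emph{unrelaxed} problem posed over $W^{1,1}_0(\Omega)$, namely $E(Du)+G(u)$ with $E(P)=\int_\Omega\phi(x,P+\nabla f)\,dx$ and $G$ the indicator of $\cK=\{u\ge\psi-f\}$. Since that problem has the largest value, the chain
\begin{equation*}
\inf(P)\;\ge\;\inf_{u\in BV_f(\Omega)}\int_\Omega\phi(x,Du)\;\ge\;\min\,\eqref{obstacle-problem0}\;\ge\;\sup(P^*)\;=\;\inf(P)
\end{equation*}
closes the sandwich: the third inequality is the $\eta_\epsilon$-cutoff estimate \eqref{thm1-8-rel1}--\eqref{thm1-8-rel3}, and the final equality is Ekeland--Temam. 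This proves Proposition \ref{relationProp} as a byproduct and makes the equality conditions apply simultaneously to minimizers of \eqref{obstacle-problem-BC} and \eqref{obstacle-problem0}. If you re-anchor your duality at the $W^{1,1}_0$ level, your outline essentially reproduces the paper. Two further points need more care than you give them: (i) your dual functional $G(T)$ pairs the distribution $\nabla\cdot T$ with $\psi$ and $f$ across $\partial\Omega$, where $\nabla\cdot T$ may charge the boundary and both $u$ and the normal trace of $T$ can jump; the paper's computation of $G^*$, with the boundary constant $C(V)$, and the cutoffs $\eta_\epsilon$ exist precisely to make this rigorous; (ii) dual attainment does not follow from weak-$*$ compactness alone, since the boundary contribution to your $G(T)$ is not weak-$*$ continuous --- the paper instead obtains attainment from the qualification condition in Theorem III.4.1 of Ekeland--Temam.
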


The above result generalizes Theorem 1.2 in \cite{Moradifam} and simplifies to the following result in the special case $\varphi(x,\xi)=a(x)|\xi|$. 

\begin{corollary}
Let $\Omega\subset\bR^n$ be a bounded Lipschitz domain and assume that $a\in C(\bar\Omega)$ is a non-negative function, and $f\in W^{1,1}_0(\bR^n)$. 
Then there exists a vector field $T\in(\cL^\infty(\Omega))^n$ with $|T|\leq a$ a.e. in $\Omega$, and $\nabla \cdot T \leq 0$ such that every minimizer $w\in\cA_f$ of the least gradient problem 
\begin{equation}\label{Boundary1}
\inf_{v\in\cA_f}\int_\Omega a|Dv|,
\end{equation}
satisfies 
\begin{equation*}
T\cdot\frac{Dw}{|Dw|}=|T|=a,\quad |Dw|-\text{a.e. in }\Omega,
\end{equation*}
\begin{equation*}
a|f-w|=[T,(f-w)\nu_\Omega],\quad\cH^{n-1}-\text{a.e. in }\partial\Omega \cap \{w>\psi\}.
\end{equation*}
Moreover $T$ is divergence-free in $\{x\in \Omega: w(x)>\psi(x)\}$. 
\end{corollary}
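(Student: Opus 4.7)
The plan is to derive this corollary as a direct specialization of Theorem \ref{structure} to the weighted isotropic integrand $\phi(x,\xi)=a(x)|\xi|$. First I would invoke Theorem \ref{structure} to obtain a vector field $T\in(\cL^\infty(\Omega))^n$ satisfying $\phi^0(x,T)\leq1$ a.e., $\nabla\cdot T\leq0$ in $\Omega$, and the two identities \eqref{first}--\eqref{second} together with the divergence-free property on $\{w>\psi\}$, for every minimizer $w$ of \eqref{Boundary1}.

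Next I would compute the dual norm for this particular $\phi$. Since $\phi(x,p)=a(x)|p|$, the definition $\phi^0(x,\xi)=\sup\{\xi\cdot p:\phi(x,p)\leq1\}$ gives $\phi^0(x,\xi)=|\xi|/a(x)$ at points where $a(x)>0$. The constraint $\phi^0(x,T)\leq 1$ therefore translates precisely to the pointwise bound $|T(x)|\leq a(x)$ a.e. in $\Omega$. (At points where $a$ vanishes, the bound $|T|\le a$ follows by continuity from the regularization/coercivity hypothesis built into C1.) The boundary identity \eqref{second} and the divergence statements pass through verbatim, since $\phi(x,\nu_\Omega)=a(x)$.

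The only step requiring a short argument is the identification $T\cdot\frac{Dw}{|Dw|}=|T|=a$. Substituting $\phi(x,\xi)=a(x)|\xi|$ into \eqref{first} and using that $\frac{Dw}{|Dw|}$ is a unit vector $|Dw|$-a.e.\ yields
\begin{equation*}
a(x)=\phi\!\left(x,\tfrac{Dw}{|Dw|}\right)=T\cdot\tfrac{Dw}{|Dw|},\qquad |Dw|\text{-a.e. in }\Omega.
\end{equation*}
By Cauchy--Schwarz we have $T\cdot\tfrac{Dw}{|Dw|}\le|T|\le a(x)$, and since the first and last quantities agree, equality must hold throughout. This forces both $|T|=a$ and the alignment $T=a\,\tfrac{Dw}{|Dw|}$, $|Dw|$-a.e.\ in $\Omega$, which is exactly the stated conclusion.

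No real obstacle arises beyond this elementary Cauchy--Schwarz saturation; the corollary is essentially a translation of Theorem \ref{structure} through the dual-norm dictionary for the weighted Euclidean norm, with the only mild subtlety being to handle the (possibly empty) set where $a$ degenerates, which is already covered by the standing assumption C1.
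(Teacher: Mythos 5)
Your proposal is correct and takes essentially the same route as the paper: the paper offers no separate proof of this corollary, presenting it as a direct simplification of Theorem \ref{structure} in the special case $\phi(x,\xi)=a(x)|\xi|$, and your dual-norm computation $\phi^0(x,\xi)=|\xi|/a(x)$ together with the Cauchy--Schwarz saturation argument supplies exactly the details left implicit there. One small caveat: your parenthetical about points where $a$ vanishes is misstated, since condition C1 forces $a\geq\alpha>0$, so the degenerate case is \emph{excluded} by the standing hypotheses rather than covered by them --- an inconsistency that is already present in the paper's own statement, which only assumes $a$ non-negative.
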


The above corollary asserts that there exists a vector field $T$ such that for every minimizer $w$ of \eqref{Boundary1} the vector field $\frac{Dw}{|Dw|}$ is parallel to $T$, $|Dw|$-a.e. in $\Omega$. Moreover, if the trace of $T$ can be represented by a function $T_{tr}\in ( L^{\infty}(\partial \Omega))^n$, then up to a set with $\mathcal{H}^{n-1}$-measure zero 
\[\{x\in \partial \Omega \cap \{w>\psi \}: w|_{\partial \Omega}> f\} \subseteq \{x\in \partial \Omega: T_{tr} \cdot \nu_{\Omega}=|T_{tr}|\},\]
and similarly 
\[\{x\in \partial \Omega  \cap \{w>\psi \}: w|_{\partial \Omega}< f\} \subseteq \{x\in \partial \Omega:  T_{tr} \cdot \nu_{\Omega}=-|T_{tr}|\}.\]
In other words $w|_{\partial \Omega}=f$, $\mathcal{H}^{n-1}$-a.e. in 
\[\{ x\in \partial \Omega \cap  \{w>\psi \}: |T_{tr}\cdot \nu_{\Omega}|<|T_{tr}|\},\]
for every minimizer $w$ of \eqref{Boundary1}. These results extend the second authors results about structure of minimizers of least gradient problems \cite{Moradifam} for least gradient problems with obstacle.

We will also prove the following results about the uniqueness and regularity of minimizers of the obstacle least gradient problem \eqref{obstacle-problem-BC}.

\begin{theorem}[Comparison Principle]\label{Comparison principle}
Let $\Omega\subset\bR^n$ be a bounded Lipschitz domain with connected boundary, and assume $\phi:\Omega\times\bR^n\longrightarrow\bR$ satisfies C1-C5. Suppose that $u_1$ and $u_2$ are solutions of \eqref{obstacle-problem-BC} for boundary conditions $f_1, f_2\in C(\partial\Omega)$ respectively. Then 
\begin{equation*}
|u_1-u_2|\leq\sup_{\partial\Omega}|f_1-f_2| \qquad\text{ a.e. in }\Omega.
\end{equation*}
Moreover, 
\begin{equation}\label{boundary-monoton}
u_2\geq u_1\text{ a.e. in }\Omega, \text{ if }f_2\geq f_1\text{ on }\partial\Omega.
\end{equation}
In particular, for every $f\in C(\partial\Omega)$, there is at most one solution for \eqref{obstacle-problem-BC}. 
\end{theorem}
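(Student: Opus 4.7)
The plan is to reduce the whole statement to the monotonicity claim ``$f_2\ge f_1$ on $\partial\Omega\Rightarrow u_2\ge u_1$ a.e.'', which I would prove at the level of super-level sets. Uniqueness and the $L^{\infty}$ bound then follow as formal consequences.

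\textbf{Step 1 (monotonicity).} I would use the coarea formula $\int\phi(x,Du_i)=\int_{\bR}P_\phi(\{u_i>t\})\diff t$ to reduce to a statement about the super-level sets $E_t^i:=\{u_i>t\}$. A standard slicing/perturbation argument, using crucially that the modified function still satisfies $u\ge\psi$ precisely when the competing set $F$ satisfies $F\supseteq\{\psi>t\}$, shows that for a.e.\ $t$ the set $E_t^i$ minimizes $P_\phi$ among all $F\subset\bR^n$ with $\{\psi>t\}\subseteq F$ and $F\setminus\Omega=\{f_i>t\}\setminus\Omega$. Assuming $f_2\ge f_1$, the sets $F_\cap:=E_t^1\cap E_t^2$ and $F_\cup:=E_t^1\cup E_t^2$ are admissible in the $t$-level problems for $u_1$ and $u_2$ respectively, so minimality combined with Lemma~\ref{perimeter inequality} forces
\[
P_\phi(F_\cap)=P_\phi(E_t^1),\qquad P_\phi(F_\cup)=P_\phi(E_t^2).
\]
Thus $F_\cap$ is also a minimizer of the $u_1$-level problem. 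To upgrade this to $F_\cap=E_t^1$ (i.e.\ $E_t^1\subseteq E_t^2$), I would invoke C3--C5 and the barrier condition: inside $\Omega$, on the free region $\{u_i>\psi\}$ the boundaries $\partial F_\cap$ and $\partial E_t^1$ are $\phi$-minimal hypersurfaces, and the strong maximum principle for such surfaces (together with Theorem~\ref{structure}, which provides a Cahn--Hoffman vector field $T$ that is divergence-free on $\{u>\psi\}$) prevents them from touching from one side without coinciding; the barrier condition rules out touching along $\partial\Omega$, since $\{f_i>t\}\setminus\Omega$ is prescribed. Carrying this out for a.e.\ $t$ and applying the layer-cake formula gives $u_1\le u_2$ a.e.

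\textbf{Step 2 (uniqueness).} Taking $f_1=f_2$ in Step~1 gives $u_1\le u_2$ and $u_2\le u_1$, hence $u_1=u_2$ a.e.

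\textbf{Step 3 ($L^{\infty}$ bound).} Let $c:=\sup_{\partial\Omega}|f_1-f_2|$ and define
\[
v_1:=u_1\wedge(u_2+c),\qquad v_2:=u_2\vee(u_1-c).
\]
Since $c\ge 0$ and $u_i\ge\psi$, one checks $v_i\ge\psi$, and the choice of $c$ makes $v_i|_{\partial\Omega}=f_i$, so each $v_i$ is admissible for the $u_i$-problem. The BV analogue of Lemma~\ref{perimeter inequality} (subadditivity of the $\phi$-total variation under $\min/\max$) yields
\[
\int\phi(x,Dv_1)+\int\phi(x,Dv_2)\le\int\phi(x,Du_1)+\int\phi(x,Du_2),
\]
and minimality of $u_1,u_2$ forces equality, so each $v_i$ is itself a minimizer for data $f_i$. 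Step~2 then gives $v_1=u_1$ and $v_2=u_2$, which rearrange to $u_1-u_2\le c$ and $u_2-u_1\le c$, i.e.\ $|u_1-u_2|\le c$ a.e.

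\textbf{Main obstacle.} The decisive step is the maximum-principle argument in Step~1 upgrading ``$F_\cap$ is another minimizer'' to ``$F_\cap=E_t^1$''. The obstacle forces one to distinguish the free region $\{u>\psi\}$, where classical minimal-surface comparison applies via Theorem~\ref{structure}, from the contact set $\{u=\psi\}$, where $T$ only satisfies $\nabla\cdot T\le 0$ and the two competing boundaries may \emph{a priori} genuinely touch along the obstacle; C3--C5, connectedness of $\partial\Omega$, and the barrier condition together are what allow one to exclude such degenerate touching.
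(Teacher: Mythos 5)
Your Steps 2 and 3 are correct formal reductions (the lattice trick $v_1=u_1\wedge(u_2+c)$, $v_2=u_2\vee(u_1-c)$ together with submodularity of the $\phi$-total variation does derive the $L^\infty$ bound from monotonicity and uniqueness), and the first half of Step 1 matches the paper: by Theorem~\ref{minimal-superset} the super-level sets solve \eqref{variational problem}, and submodularity (Lemma~\ref{perimeter inequality}) shows $F_\cap=E_t^1\cap E_t^2$ is again a minimizer of the level problem for $u_1$. The genuine gap is the decisive step you yourself flag: upgrading ``$F_\cap$ is another minimizer'' to ``$F_\cap=E_t^1$''. Because you work at the \emph{same} level $t$ for both functions, none of the comparison machinery applies: Theorem~\ref{comparison-sets} requires $L_1\Subset L_2$ \emph{and} $O_1\Subset O_2$, and Theorem~\ref{maximum-principle} requires $E\setminus\Omega\Subset F\setminus\Omega$, whereas at equal levels the obstacle sets are literally identical, $O_t^1=O_t^2=\{\psi>t\}$, and the boundary-data sets satisfy only $\{f_1>t\}\subseteq\{f_2>t\}$, with equality wherever $f_1=f_2$. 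Touching therefore cannot be excluded --- it genuinely occurs: both $\partial E_t^1$ and $\partial E_t^2$ may contain pieces of $\partial\{\psi>t\}$ (both solutions can sit on the obstacle), and the boundaries can meet at points of $\partial\Omega$. Moreover, ``two minimizers of the same level problem'' does not imply they coincide, since minimizers of \eqref{variational problem} need not be unique; neither the strong maximum principle, nor the calibration $T$ of Theorem~\ref{structure} (which constrains orientations, not inclusions), nor connectedness of $\partial\Omega$ can close this, so the claimed exclusion of ``degenerate touching'' is an assertion without a proof --- and, as stated, false.

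The paper's proof circumvents exactly this by never arguing at a single level. If \eqref{boundary-monoton} fails, then $\cH^n(\{u_1>\lambda_1>\lambda_2\geq u_2\})>0$ for some \emph{distinct} rationals $\lambda_1>\lambda_2$; one sets $E_1=\{u_1>\lambda_1\}$, $E_2=\{u_2>\lambda_2\}$, so $\cH^n(E_1\setminus E_2)>0$. The strict gap between the levels is what converts the non-strict orderings $f_2\geq f_1$ and $\psi=\psi$ into the compact containments that Theorem~\ref{comparison-sets} needs: $\overline{\{f_1>\lambda_1\}}\subseteq\{f_1\geq\lambda_1\}\subseteq\{f_2>\lambda_2\}$ and $\overline{\{\psi>\lambda_1\}}\subseteq\{\psi>\lambda_2\}$. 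Theorem~\ref{comparison-sets} then gives $E_1^{(1)}\Subset E_2^{(1)}$, contradicting $\cH^n(E_1\setminus E_2)>0$. The same two-level argument (with $\lambda_1>\lambda_2+\sup|f_1-f_2|$) yields the $L^\infty$ bound directly, so your Step 3, while valid, is not needed once Step 1 is repaired along these lines.
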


\begin{theorem}[Holder Regularity]\label{Holder regularity}
Suppose that $\phi:\Omega\times\bR^n\longrightarrow\bR$  satisfies C1-C5 and
let $\Omega$ be a bounded, open subset of $\bR^n$ with $C^2$ boundary which the signed distance $d(\cdot)$ to $\partial\Omega$ satisfies the relation \eqref{barrier-condition}. Assume $f\in C^{0,\alpha}(\partial\Omega)$, and $\psi\in C^{0,\alpha/2}$ for some $0<\alpha\leq1$. If $u\in BV(\Omega)$ is a solution of \eqref{obstacle-problem-BC}, then $u\in C^{0,\alpha/2}(\overline\Omega)$.\\
\end{theorem}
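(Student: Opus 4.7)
The plan is to prove Hölder regularity in two stages: first establish the estimate $|u(x) - f(x_0)| \le C|x-x_0|^{\alpha/2}$ at every boundary point $x_0 \in \partial\Omega$, and then propagate this boundary bound into $\Omega$ by a translation-comparison argument that crucially uses assumption C5. The Comparison Principle (Theorem \ref{Comparison principle}) is the engine throughout, and explicit geometric barriers play the role of upper and lower envelopes.

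For boundary regularity at a point $x_0 \in \partial\Omega$, fix $r>0$ small and build boundary data $f^\pm_r \in C(\partial\Omega)$ satisfying $f^-_r \le f \le f^+_r$ on $\partial\Omega$, with $f^\pm_r \equiv f(x_0)\pm[f]_\alpha r^\alpha$ on $\partial\Omega \cap B_r(x_0)$, and with $f^-_r \ge \psi|_{\partial\Omega}$ (possible thanks to the compatibility $f\ge\psi$). Let $u^\pm_r$ be the corresponding obstacle minimizers supplied by Theorem \ref{Existence}; Theorem \ref{Comparison principle} gives $u^-_r \le u \le u^+_r$. Because C5 holds and $\partial\Omega$ is $C^2$ with the strong barrier condition \eqref{barrier-condition}, a Wulff-shape cap for the norm $\phi$, tangent to $\partial\Omega$ at $x_0$ and scaled to the appropriate height, is a $\phi$-minimal surface that sits inside $\Omega$ near $x_0$. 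This cap furnishes a super-barrier $w^+$ with $w^+(x) \le f(x_0)+Cr^{\alpha/2}$ on $\{\mathrm{dist}(x,\partial\Omega)\le r\}\cap B_{\sqrt{r}}(x_0)$, the $r^{1/2}$ scaling being dictated by the curvature of the Wulff cap. To apply the comparison principle against $w^+$, one needs to verify that $w^+$ is itself admissible for the obstacle problem, i.e.\ $w^+ \ge \psi$ on the region in question; here $\psi \in C^{0,\alpha/2}$ is exactly what is required, since $\psi(x) \le \psi(x_0) + [\psi]_{\alpha/2}|x-x_0|^{1/2}$ is absorbed by $w^+$ on the appropriate scale. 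A symmetric sub-barrier argument yields the two-sided bound.

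Once the boundary estimate holds at every $x_0 \in \partial\Omega$, the interior Hölder modulus follows by translation: for small $h\in\bR^n$, the shifted function $u_h(x):=u(x+h)$ minimizes an obstacle problem on $\Omega-h$ with boundary data $f(\cdot+h)$ and obstacle $\psi(\cdot+h)$; assumption C5 is indispensable here because otherwise translation would alter the integrand $\phi$. Applying the Comparison Principle on $\Omega\cap(\Omega-h)$ against the combined obstacle $\max(\psi,\psi_h)$, and invoking the boundary Hölder modulus of $u$ together with the $C^{0,\alpha/2}$ modulus of $\psi$, produces $\|u-u_h\|_{L^\infty(\Omega\cap(\Omega-h))}\le C|h|^{\alpha/2}$. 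The main difficulty I anticipate is the barrier step: the standard Wulff barriers from unconstrained least-gradient theory must be matched against the obstacle, and this matching forces the curvature scale $r^{1/2}$ of the barrier to dominate the oscillation of $\psi$ across $B_r(x_0)$, which is precisely why $\psi\in C^{0,\alpha/2}$ appears in the hypotheses and why the resulting exponent is $\alpha/2$ and not $\alpha$; a weaker regularity on $\psi$ would allow the obstacle to pierce the barrier at scales that cannot be absorbed.
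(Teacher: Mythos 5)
Your two-stage plan (boundary barriers, then interior propagation by translation using C5) matches the paper's strategy in spirit, but both stages rest on a comparison step that is not available as you state it, and these are genuine gaps rather than omitted routine details.

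In Stage 1 you have no valid tool to compare the minimizer $u$ with your barrier. Theorem \ref{Comparison principle} compares two \emph{solutions} of \eqref{obstacle-problem-BC} with the \emph{same} obstacle; it cannot be ``applied against $w^+$'' when $w^+$ is built from a Wulff cap, which is not a solution of anything in the admissible class. (Moreover, a cap of the Wulff shape is not a $\phi$-minimal surface: it has constant \emph{nonzero} anisotropic mean curvature. That sign of curvature is indeed what a one-sided barrier needs, but your proposal never states, let alone proves, the lemma that converts it into a bound on $u$.) The paper supplies exactly this missing ingredient as Lemma \ref{barriers}: if $v\in C^2$ with $|\nabla v|>0$, $u\le v$ on the boundary and $\cL v<0$, where $\cL v=\sum_i\partial_{x_i}\phi_{\xi_i}(x,Dv)$, then $u\le v$; the proof is a calibration/integration-by-parts argument with the vector field $g=\eta\,\phi_\xi(x,Dv)$ that uses the minimality of $u$ directly, and it is here (not in the barrier geometry) that the obstacle constraint must be respected. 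Given that lemma, no Wulff caps are needed: the paper takes the explicit analytic barriers $w^\pm=f(x_0)\pm K\bigl(|x-x_0|^2+\lambda d(x)\bigr)^{\alpha/2}$, with $d$ the signed distance, and hypothesis \eqref{barrier-condition} is precisely what yields $\cL w^+<0<\cL w^-$ for $\lambda$ large (Lemma \ref{boundary-regularity}). Your remark that the upper barrier must dominate the obstacle identifies a real subtlety of the obstacle setting, but it does not rescue the stage: without a solution-versus-barrier comparison lemma, the claimed bound $u\le f(x_0)+C|x-x_0|^{\alpha/2}$ is unsupported, and the detour through $f_r^\pm$ and $u_r^\pm$ is idle since you still have no way to estimate $u_r^+$.

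In Stage 2, applying Theorem \ref{Comparison principle} to $u$ and $u_h$ on $\Omega\cap(\Omega-h)$ is not licensed: that theorem assumes both functions solve the obstacle problem with the same obstacle $\psi$ on a bounded Lipschitz domain with connected boundary, and its proof runs through Theorem \ref{comparison-sets}, which needs the barrier condition or \eqref{thm-comp-relation}; here the obstacles differ ($\psi$ versus $\psi_h$) and the intersection domain need not satisfy any of these hypotheses. A two-obstacle, two-domain comparison can in principle be extracted from Theorem \ref{comparison-sets} (which does allow $O_1\Subset O_2$), but carrying that out is essentially what the paper does, in geometric rather than functional form: for $s<t$ it takes points $x\in\partial E_t$, $y\in\partial E_s$ realizing $\mathrm{dist}(\partial E_t,\partial E_s)$, and Proposition \ref{minimizer of distance} --- itself a translation argument, which is exactly where C5 enters --- forces either $x$ or $y$ to lie on $\partial\Omega$, in which case Lemma \ref{boundary-regularity} gives $|t-s|\le C|x-y|^{\alpha/2}$, or $x\in\bar O_t$, in which case $u(x)=\psi(x)$, $u(y)\ge\psi(y)$ and $t-s\le\psi(x)-\psi(y)\le[\psi]_{0,\alpha/2}|x-y|^{\alpha/2}$. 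So your translation idea and your diagnosis of why $\psi\in C^{0,\alpha/2}$ forces the exponent $\alpha/2$ are the right intuitions, but both of your stages invoke Theorem \ref{Comparison principle} on objects to which it does not apply; the paper's Lemma \ref{barriers} (at the boundary) and Proposition \ref{minimizer of distance} (in the interior) are the tools that actually close these gaps.
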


\begin{theorem}[Lipschitz Regularity]\label{Lipschitz regularity}
Suppose that $\phi:\Omega\times\bR^n\longrightarrow\bR$ satisfies C1-C5 and
let $\Omega$ be a bounded, open subset of $\bR^n$ with $C^2$ boundary which the signed distance $d(\cdot)$ to $\partial\Omega$ satisfies the relation \eqref{barrier-condition}. Assume $f\in C^{1,\alpha}(\partial\Omega)$, and $\psi\in C^{0,\frac{1+\alpha}2}$ for some $0<\alpha\leq1$. If $u\in  BV(\Omega)$ is a solution of \eqref{obstacle-problem-BC}, then $u\in C^{0,\frac{1+\alpha}2}(\overline\Omega)$.
\end{theorem}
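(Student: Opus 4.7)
The plan is to establish the modulus-of-continuity estimate at each boundary point $x_0\in\partial\Omega$ by sandwiching the solution $u$ between an upper barrier $\bar u$ and a lower barrier $\underline u$ that touch $u$ at $x_0$ with a prescribed $C^{0,(1+\alpha)/2}$ modulus, and then to propagate the boundary regularity inward using the structure provided by Theorem \ref{structure} together with standard interior regularity of anisotropic minimal surfaces under conditions C3--C5. Theorem \ref{Holder regularity} already yields $u\in C^{0,\alpha/2}(\overline\Omega)$, so the essential new content is the $1/2$ gain at boundary points allowed by the improved regularity of $f$.

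First, I would fix $x_0\in\partial\Omega$, extend $f$ to a $C^{1,\alpha}$ function on a neighborhood of $x_0$, and write $\ell(x):=f(x_0)+Df(x_0)\cdot(x-x_0)$. For suitable constants $M,C,r>0$ depending only on $\|f\|_{C^{1,\alpha}}$, $\|\psi\|_{C^{0,(1+\alpha)/2}}$ and the structural constants of $\phi$, I would look for an upper barrier of the form
\[
\bar u(x):=\ell(x)+M\sqrt{-d(x)}+C|x-x_0|^{1+\alpha},
\]
in the one-sided neighborhood $N_r:=\{x\in\Omega:-r<d(x)\}\cap B_r(x_0)$. A direct computation using the $C^2$ regularity of $\partial\Omega$, conditions C3--C5, and most importantly the quantitative barrier condition \eqref{barrier-condition} shows that the leading term $M\sqrt{-d(x)}$ produces a strictly negative contribution to $\nabla\cdot\phi_\xi(x,\nabla\bar u)$ that dominates the lower-order terms for $r$ small and $M$ large; thus $\bar u$ is a smooth $\phi$-supersolution on $N_r$. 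The hypothesis $\psi\in C^{0,(1+\alpha)/2}$ ensures $\bar u\ge\psi$ on $N_r$, while the $C^{1,\alpha}$ regularity of $f$ gives $\bar u\ge f=u$ on $\partial\Omega\cap B_r(x_0)$, and on the interior part of $\partial N_r$ one has $\bar u\ge u$ from the $C^{0,\alpha/2}$ estimate of Theorem \ref{Holder regularity} together with a large enough $M$. Applying the localized comparison principle of Theorem \ref{Comparison principle} to $u$ and the obstacle minimizer in $N_r$ with boundary datum $\bar u$ then yields $u\le\bar u$ in $N_r$. A symmetric construction gives a lower barrier $\underline u$, with the observation that wherever $\underline u\le\psi$ the inequality $u\ge\underline u$ is automatic from $u\ge\psi$, so the obstacle does not obstruct the lower bound. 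Together these give $|u(x)-f(x_0)|\le C|x-x_0|^{(1+\alpha)/2}$ in $N_r$, uniformly in $x_0\in\partial\Omega$.

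For the interior, I would use Theorem \ref{structure}: on the noncoincidence set $\{u>\psi\}$ the vector field $T$ is divergence-free, so each superlevel set $\{u>t\}$ is locally $\phi$-area minimizing, and by classical regularity for anisotropic minimal surfaces (available under C3--C5 with $\phi$ independent of $x$) these level sets are $C^{1,\beta}$ smooth and pairwise disjoint. Oscillation estimates for $u$ in the interior then follow by comparing adjacent level sets with planar $\phi$-minimizers anchored to the boundary, propagating the $(1+\alpha)/2$-modulus from $\partial\Omega$ into $\Omega$. On the contact set $\{u=\psi\}$ the claimed regularity is directly inherited from the hypothesis $\psi\in C^{0,(1+\alpha)/2}$, and continuity of $u$ across the free boundary $\partial\{u>\psi\}$ glues the two regimes.

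The main obstacle will be the barrier computation in the second paragraph: establishing the correct sign of $\nabla\cdot\phi_\xi(x,\nabla\bar u)$ requires a careful expansion of the Hessian of $d$ near $\partial\Omega$ (using $\partial\Omega\in C^2$) and of $D_\xi^2\phi$ along $\nabla d$, with the gain supplied by \eqref{barrier-condition} having to dominate the perturbations coming from $Df(x_0)$ and $C|x-x_0|^{1+\alpha}$; getting uniform constants in $x_0$ via compactness is delicate but standard. A secondary technical point is that Theorem \ref{Comparison principle} is stated globally, so one must first check it can be localized to $N_r$ by using $\bar u$ as comparison data on the inner portion of $\partial N_r$, and this is exactly where the obstacle compatibility $\bar u\ge\psi$, guaranteed by $\psi\in C^{0,(1+\alpha)/2}$, is needed.
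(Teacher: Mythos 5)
Your outline has the right two-stage shape (boundary barriers, then inward propagation), which is indeed the paper's strategy, but both stages have genuine gaps. The decisive one is the barrier itself: $\bar u(x)=\ell(x)+M\sqrt{-d(x)}+C|x-x_0|^{1+\alpha}$ cannot yield the claimed estimate $|u(x)-f(x_0)|\le C|x-x_0|^{(1+\alpha)/2}$. (Set aside the sign issue that with the paper's convention $d>0$ inside $\Omega$, so $\sqrt{-d}$ is undefined there.) Along the inner normal at $x_0$ one has $\mathrm{dist}(x,\partial\Omega)=|x-x_0|$, so the term $M\sqrt{|d(x)|}=M|x-x_0|^{1/2}$ dominates $|x-x_0|^{(1+\alpha)/2}$ for every $\alpha>0$; the sandwich $\underline u\le u\le\bar u$ therefore produces only a $C^{0,1/2}$ modulus at the boundary, and since the interior step can never do better than the boundary estimate it feeds on, the argument ends at $C^{0,1/2}$, not $C^{0,\frac{1+\alpha}{2}}$. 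The paper avoids this by raising the whole distance-like quantity to the correct power: $w^+(x)=Kv^{\frac{1+\alpha}{2}}(x)+\nabla f(x_0)\cdot(x-x_0)+f(x_0)$ with $v(x)=|x-x_0|^2+\lambda d(x)$; since $d(x)\le|x-x_0|$ one gets $v(x)\le C|x-x_0|$, hence exactly the $(1+\alpha)/2$ modulus, while $\mathcal{L}w^+<0$ follows with no delicate expansion because $\phi_\xi$ is $0$-homogeneous, so $\phi_\xi(x,Dw^+)=\phi_\xi\big(x,\tfrac{2}{\lambda}(x-x_0)+Dd\big)$ and \eqref{barrier-condition} applies for $\lambda$ large. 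Moreover, the comparison $u\le w^+$ should come from Lemma \ref{barriers}, which pits the minimality of $u$ directly against the competitor $\max(u,w^+-\epsilon)$ on $U=B(x_0,\delta)\cap\Omega$; your detour through a ``localized'' Theorem \ref{Comparison principle} on $N_r$ is unjustified, since that theorem compares two obstacle minimizers on a domain satisfying the standing hypotheses (Lipschitz, connected boundary, barrier condition), none of which you verify for $N_r$, and $\bar u$ is not an obstacle minimizer in any case.

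The interior stage is also not a proof as written. Smoothness of level sets on $\{u>\psi\}$ (via Theorem \ref{structure} or classical regularity) gives no quantitative separation between $\partial E_s$ and $\partial E_t$, and H\"older regularity of $u$ \emph{restricted} to the contact set does not control $|u(x)-u(y)|$ when only one of the two points touches the obstacle; ``comparing adjacent level sets with planar $\phi$-minimizers anchored to the boundary'' is not a defined operation. The mechanism the paper uses --- shared with the proof of Theorem \ref{Holder regularity}, and the place where C5 genuinely enters through translations and the strict maximum principle --- is Proposition \ref{minimizer of distance}: if $\mathrm{dist}(\partial E_s\cap\Omega,\partial E_t\cap\Omega)=|x-y|$ with $x\in\partial E_t\cap\Omega$, $y\in\partial E_s\cap\Omega$, then $x\in\bar O_t\cup\partial\Omega$ or $y\in\partial\Omega$. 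In the boundary case the (corrected) barrier gives $t-s\le C|x-y|^{(1+\alpha)/2}$; in the obstacle case $u(x)=\psi(x)$ and $u(y)\ge\psi(y)$ give $t-s\le[\psi]_{0,\frac{1+\alpha}{2}}|x-y|^{(1+\alpha)/2}$. Without this touching/translation argument your inward propagation has no engine, so the proposal needs both the barrier exponent fixed and the interior step replaced by (or reduced to) Proposition \ref{minimizer of distance}.
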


\section{Structure of minimizers}\label{section-Structure of Minimizers}
In this section we study the relationship between minimizers of the least gradient problems \eqref{obstacle-problem-BC} and \eqref{obstacle-problem0}, and prove several results about existence and structure of minimizers of these problems.

Let $\nu_\Omega$ denote the outer unit normal vector to $\partial\Omega$. Then for every $V\in(\cL^\infty(\Omega))^n$ with $\nabla\cdot V\in\cL^n(\Omega)$ there exists a unique function $[V,\nu_\Omega]\in\cL^\infty_{\cH^{n-1}}(\partial\Omega)$ such that 
\begin{equation}\label{trace}
\int_{\partial\Omega}[V,\nu_\Omega]u\,d\cH^{n-1}=\int_\Omega u\nabla\cdot Vdx+\int_\Omega V\cdot D udx,
\quad u\in C^1(\bar\Omega).
\end{equation}
Moreover, for $u\in BV(\Omega)$ and $V\in(\cL^\infty(\Omega))^n$ with $\nabla \cdot V\in\cL^n(\Omega)$, the linear functional $u\mapsto(V\cdot Du)$ gives rise to a Radon measure on $\Omega$, and \eqref{trace} is valid for every $u\in BV(\Omega)$ (see \cite{Alberti, Anzellotti} for a proof).

We first show that there exists a vector field $T$ that determines the structure of all minimizers of  \eqref{obstacle-problem-BC} and \eqref{obstacle-problem0}. Next we define the dual of the least gradient problem \eqref{obstacle-problem-BC}. Let $E:(\cL^1(\Omega))^n\ra\bR$ and $G:W^{1,1}_0(\Omega)\ra\bR$ be defined as follows
\begin{equation}\label{definitionE&G}
E(P):=\int_\Omega\phi(x,P+\nabla f)dx,\quad G(u)=\left\{\begin{array}{ll}0 &u\in\cK\\+\infty &u\notin\cK,\end{array}\right.
\end{equation}
where  
\[\cK:=\{u\in W^{1,1}_0(\Omega):u\geq\psi-f\}.\]
 Then the problem \eqref{obstacle-problem-BC}  can be written as 
\begin{equation*}
(P)\qquad\inf_{u\in W^{1,1}_0(\Omega)} E(Du)+G(u).
\end{equation*}
By Fenchel duality (see Chapter III in \cite{Ekeland-Temam}) the dual problem is given by
\begin{equation*}
(P^*)\qquad\sup_{V\in (\cL^\infty(\Omega))^n}\{-E^*(V)-G^*( \nabla \cdot V)\},
\end{equation*}
where $E^*$ and $G^*$ are the Legendre-Fenchel transform of $F$ and $G$. By Lemma 2.1 in \cite{Moradifam} we have 
\begin{equation*}
E^*(V)=\left\{\begin{array}{ll}-\langle Df,V\rangle&\text{if }\phi^0(x,V(x))\leq1\text{ in }\Omega\\+\infty,&\text{otherwise.}\end{array}\right.
\end{equation*}
One can also compute $G^*:W^{-1,\infty}(\Omega)\ra\bR$ as follows.

\begin{lemma}
 Suppose $v=\nabla \cdot V$ for some $V\in(\cL^\infty(\Omega))^n$. Then
$$G^*(v)=\left\{\begin{array}{ll}< \infty,&v\in\cC^*,\\+\infty,&v\notin\cC^*,\end{array}\right.$$
where 
\[\cC^*:=\{v\in W^{-1,\infty}(\Omega):\langle v,u\rangle\leq0, \text{for all }0\leq u\in W^{1,1}_0(\Omega)\}.\]
Moreover for $v\in C^*$
\begin{equation}
G^*(v)=-\int_{\Omega} V \cdot D (\psi-f)+C(V),
\end{equation}
for some constant $C$ which only depends on $V$ near $\partial \Omega$, i.e. 
\[C(V_1)=C(V_2) \ \ \hbox{if}\ \ V_1-V_2  \in (L^{\infty}_c(\Omega))^n.\]
\end{lemma}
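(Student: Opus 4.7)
The plan is to evaluate $G^*(v)=\sup_{u\in\cK}\langle v,u\rangle$ through the substitution $u=(\psi-f)+w$. Because $f\in W^{1,1}_0(\bR^n)$ and $f\geq\psi$ on $\partial\Omega$ force $(\psi-f)|_{\partial\Omega}\leq0$, every $u\in\cK$ is of this form for a unique $w\in W^{1,1}(\Omega)$ with $w\geq0$ and $w|_{\partial\Omega}=(f-\psi)|_{\partial\Omega}\geq0$, and conversely any such $w$ produces an admissible $u$. Combined with $v=\nabla\cdot V$ and \eqref{trace}, this will let me rewrite $\langle v,u\rangle$ as a piece fixed by $(\psi-f)$ and the normal trace of $V$ plus the free quantity $\int_\Omega w\,dv$, reducing the problem to maximizing this last integral under the constraints on $w$.

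For the dichotomy, if $v\notin\cC^*$ I would pick $u_0\in W^{1,1}_0(\Omega)$ with $u_0\geq0$ and $\langle v,u_0\rangle>0$, observe that $\tilde u:=(\psi-f)^+$ lies in $\cK$ (it has vanishing boundary trace), and conclude that $\tilde u+tu_0\in\cK$ for every $t\geq0$ drives $\langle v,\tilde u+tu_0\rangle\to+\infty$, giving $G^*(v)=+\infty$. If instead $v\in\cC^*$, then $-v$ is a non-negative distribution on $\Omega$, hence a non-negative Radon measure, so in particular $\int w\,dv\leq0$ for every non-negative $w$.

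For the main computation, using $u|_{\partial\Omega}=0$ together with \eqref{trace} gives $\langle v,u\rangle=-\int_\Omega V\cdot Du$, which splits as $-\int_\Omega V\cdot Dw-\int_\Omega V\cdot D(\psi-f)$. Applying \eqref{trace} once more to the (not necessarily boundary-vanishing) $w$ yields $\int_\Omega V\cdot Dw=\int_{\partial\Omega}[V,\nu_\Omega](f-\psi)\,d\cH^{n-1}-\int_\Omega w\,dv$, so
\[
\langle v,u\rangle=-\int_\Omega V\cdot D(\psi-f)+C(V)+\int_\Omega w\,dv,\qquad C(V):=-\int_{\partial\Omega}[V,\nu_\Omega](f-\psi)\,d\cH^{n-1}.
\]
Because $[V,\nu_\Omega]$ vanishes whenever $V\in (L^\infty_c(\Omega))^n$, the quantity $C(V)$ depends only on $V$ in an arbitrary neighborhood of $\partial\Omega$, which is exactly the property required by the statement. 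The inequality $\int w\,dv\leq0$ then delivers the upper bound $G^*(v)\leq-\int_\Omega V\cdot D(\psi-f)+C(V)$.

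The remaining, and essentially the only, obstacle is to realize this bound in the limit by exhibiting admissible $w_k$ with $\int_\Omega w_k\,dv\to 0$. I would fix a bounded $W^{1,1}$-extension $\Phi$ of $(f-\psi)|_{\partial\Omega}$ into $\Omega$ together with smooth cutoffs $\eta_k(x)=\eta(k\,\text{dist}(x,\partial\Omega))$, where $\eta\in C^\infty(\bR)$ satisfies $\eta(0)=1$ and $\eta\equiv 0$ on $[1,\infty)$, and set $w_k:=\Phi^+\eta_k$. Admissibility is immediate; dominated convergence gives $\int_\Omega\eta_k V\cdot D\Phi^+\to 0$; and a tubular-coordinate computation shows that $\int_\Omega\Phi^+ V\cdot D\eta_k$ converges to $\int_{\partial\Omega}[V,\nu_\Omega](f-\psi)\,d\cH^{n-1}$. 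Substituting these into the identity $\int_\Omega w_k\,dv=\int_{\partial\Omega}[V,\nu_\Omega](f-\psi)\,d\cH^{n-1}-\int_\Omega V\cdot Dw_k$ forces $\int_\Omega w_k\,dv\to 0$, so that the corresponding $u_k\in\cK$ realize the upper bound in the limit and complete the formula $G^*(v)=-\int_\Omega V\cdot D(\psi-f)+C(V)$.
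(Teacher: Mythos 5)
Your reduction $u=(\psi-f)+w$ with $w\geq0$ and your handling of the case $v\notin\cC^*$ are fine, but the core of your argument has a genuine gap: every step in which you invoke \eqref{trace} is unjustified under the hypotheses of the lemma. The paper's Gauss--Green formula \eqref{trace} and the normal trace $[V,\nu_\Omega]$ are defined only for $V\in(\cL^\infty(\Omega))^n$ with $\nabla\cdot V\in\cL^n(\Omega)$, whereas here $\nabla\cdot V=v$ is merely an element of $W^{-1,\infty}(\Omega)$; even when $v\in\cC^*$, all one gets is that $-v$ is a non-negative Radon measure (of finite mass, by the $L^\infty$ bound on $V$), which can be singular and can charge $(n-1)$-dimensional sets (e.g.\ $V=\mathrm{sign}(x_n)e_n$ gives $\nabla\cdot V=2\cH^{n-1}\llcorner\{x_n=0\}$). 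Consequently your identity
\[
\int_\Omega V\cdot Dw=\int_{\partial\Omega}[V,\nu_\Omega](f-\psi)\,d\cH^{n-1}-\int_\Omega w\,dv,
\]
and likewise your final claim that $\int_\Omega\Phi^+V\cdot D\eta_k\to\int_{\partial\Omega}[V,\nu_\Omega](f-\psi)\,d\cH^{n-1}$, are precisely the assertions that a normal trace exists for a bounded divergence-measure field and that a Gauss--Green formula holds when it is paired with a $W^{1,1}$ function. Neither follows from \eqref{trace}, and both are nontrivial: since $v$ may charge Lebesgue-null sets, $\int w\,dv$ depends on the choice of representative of $w$, and identifying it with the duality pairing $\langle v,w\rangle$ requires an argument you do not give. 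Two secondary problems: the data are only $W^{1,1}$, so $(f-\psi)|_{\partial\Omega}\in L^1(\partial\Omega)$ and a \emph{bounded} $W^{1,1}$ extension $\Phi$ need not exist, yet your dominated-convergence step uses that boundedness; and the existence of the limit defining your boundary term is asserted, not proved.

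This is exactly the difficulty the paper's proof is engineered to avoid: it never pairs $v$ with anything outside $W^{1,1}_0(\Omega)$, where $\langle v,u\rangle=-\int_\Omega V\cdot Du$ is unambiguous. It tests with $\eta_\epsilon(\psi-f)\in\cK$, notes that $\epsilon\mapsto\langle v,\eta_\epsilon(\psi-f)\rangle$ is monotone \emph{because} $v\in\cC^*$ and hence has a limit, and then \emph{defines} $C(V)=\lim_{\epsilon\to0}\int_{\Omega\setminus\Omega_\epsilon}\frac1\epsilon(\psi-f)V\cdot\nu_\Omega$, whose existence comes for free from that monotonicity rather than from any trace theorem; the upper bound follows by testing $v$ against $0\leq\eta_\epsilon(u-(\psi-f))\in W^{1,1}_0(\Omega)$. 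To rescue your route you would need to first establish (or cite, e.g.\ Anzellotti or Chen--Frid type results) a normal-trace and Gauss--Green theorem for $L^\infty$ fields whose divergence is a measure, paired with possibly unbounded $W^{1,1}$ functions; as written, your proof assumes the conclusion of such a theorem.
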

\begin{proof}
First note that 
\begin{equation*} 
G^*(v)=\sup_{u\in W^{1,1}_0(\Omega)}\big(\langle v,u\rangle-G(u)\big)
=\sup_{u\in \cK}\langle v,u\rangle.
\end{equation*}
Then if $v\notin\cC^*$, there exists $0\leq u_0\in W^{1,1}_0(\Omega)$ such that $\langle v,u_0\rangle>0$. Hence for any $u\in\cK$ and $\lambda>0$, we have $u+\lambda u_0\in\cK$ and $\langle v,u+\lambda u_0\rangle\ra\infty$ when $\lambda\ra\infty$.

For $v\in\cC^*$, consider the decomposition $u=u_+-u_-$ where $u_\pm=\max\{\pm u,0\}$. Then $\langle v,u\rangle\leq\langle v,-u_-\rangle$, and hence 
$$G^*(v)=\sup_{0\geq u\in\cK}\langle v,u\rangle.$$

Now consider the Lipschitz function $\eta_\epsilon\in C_0^{0,1}(\Omega)$ with value in $[0,1]$ such that $\eta_\epsilon\equiv1$ in $\Omega_\epsilon=\{x\in\Omega:\emph{dist}(x,\partial\Omega)\geq\epsilon\}$ and $\nabla\eta_\epsilon=-\frac1\epsilon\nu_\Omega$ a.e. in $\Omega\setminus\Omega_\epsilon$, in which $\nu_\Omega$ is a Lipschitz extension of the boundary normal vector of $\partial\Omega$ to its neighborhood. 
If  $\psi-f \leq u \leq 0$ in $\Omega$, we have $\eta_\epsilon(\psi-f)\in\cK$, and 
\[\eta_{\epsilon_2}(\psi-f) \geq \eta_{\epsilon_1}(\psi-f) \ \ \hbox{if} \ \ 0<\epsilon_1\leq \epsilon_2.\]
Since $v\in C^*$, $\langle v,\eta_\epsilon(\psi-f)\rangle$ is monotone in $\epsilon$ and the limit
\begin{equation}\label{limit}
\lim_{\epsilon \rightarrow 0} \langle v,\eta_\epsilon(\psi-f)\rangle
\end{equation}
exists.  Thus we have 
\begin{align*}
G^*(v)&\geq \lim_{\epsilon \rightarrow 0}\langle v,\eta_\epsilon(\psi-f)\rangle\\
&=\lim_{\epsilon \rightarrow 0} \left( \int_{\Omega\setminus\Omega_\epsilon}\frac1\epsilon (\psi-f) V \cdot \nu_{\Omega}-\eta_\epsilon V\cdot D(\psi-f)\,dx
-\int_{\Omega_\epsilon}V\cdot D(\psi-f)\,dx\right)\\
&=\lim_{\epsilon \rightarrow 0} \left( \int_{\Omega\setminus\Omega_\epsilon}\frac1\epsilon (\psi-f) V \cdot \nu_{\Omega}\right)
-\int_{\Omega}V\cdot D(\psi-f)\,dx\\
&=C(V)-\int_{\Omega}V\cdot D(\psi-f)\,dx,
\end{align*}
where
\[C(V):=\lim_{\epsilon \rightarrow 0} \left( \int_{\Omega\setminus\Omega_\epsilon}\frac1\epsilon (\psi-f) V \cdot \nu_{\Omega}\right).\]
Note that, in view of \eqref{limit}, the above limit exists and only depends on $V$ near $\partial \Omega$.

On the other hand, for every $\psi-f\leq u\leq0$, we have $0\leq\eta_\epsilon(u-(\psi-f))\in W^{1,1}_0(\Omega)$, 
so $0\geq\langle v,\eta_\epsilon(u-(\psi-f))\rangle$. 
Thus $\langle v,\eta_\epsilon u\rangle\leq\langle v,\eta_\epsilon(\psi-f)\rangle$.  
Letting $\epsilon \rightarrow 0$ we arrive at 
\[\langle v, u\rangle\leq C(V)-\int_{\Omega}V\cdot D(\psi-f)\,dx,\]
and hence
\[G^*(v) \leq C(V)-\int_{\Omega}V\cdot D(\psi-f)\,dx.\]
The proof is now complete. 
\end{proof}

\begin{proof}[Proof of Theorem \ref{structure}] The dual problem  $(P^*)$ has a solution. This follows from Theorem III.4.1 in \cite{Ekeland-Temam}. Indeed it easily follows from \eqref{convexity} that $I(v)=\int_{\Omega}\varphi (x, Dv)$ is convex, and $J: L^1(\Omega)\rightarrow \mathbb{R}$ with $J(p)=\int_{\Omega} \varphi(x, p)dx$ is continuous at $p=0$ (a consequence of $C_2$). Therefore the condition (4.8) in the statement of Theorem III.4.1 in \cite{Ekeland-Temam} is satisfied, duality gap is zero, and the dual problem $(P^*)$ has a solution. Let $T$ be a solution of the dual problem $(P^*)$, then it must satisfy $\phi^0(x,T(x))\leq1$ and $\nabla\cdot T\in\cC^*$ (i.e. $\nabla\cdot T\leq0$ in the sense of distributions). 
Moreover, we have 
\begin{equation*}
\sup(P^*)=\langle T, Df\rangle+\langle T, D (\psi-f )\rangle-C(T)=\langle T, D \psi \rangle-C(T).
\end{equation*}
Let  $w\in \cA_f$ be a  minimizer of \eqref{obstacle-problem0}, and $\epsilon>0$. Then 
\begin{align}
\int_\Omega\phi(x,Dw)&=\int_\Omega\phi(x,\frac{Dw}{|Dw|})|Dw|\geq\int_\Omega T\cdot\frac{Dw}{|Dw|}|Dw|\label{thm1-8-rel1}\\
&=\int_\Omega T\cdot Dw\notag\\
&=\sup(P^*)+\int_\Omega T\cdot D(w-\psi)+C(T)\notag\\
&=\sup(P^*)- \langle T, D(\psi-f)\rangle+C(T)+\int_\Omega T\cdot D(w-f)\notag\\
&=\sup(P^*)+G^*(\nabla\cdot T)+\int_\Omega T\cdot D(w-f)\notag\\
&= \sup(P^*)+G^*(\nabla\cdot T)+\int_\Omega T\cdot D(\eta_{\epsilon}(w-f))\notag\\
&\hspace{3cm}+\int_\Omega T\cdot D \left[(1-\eta_\epsilon)(w-f)\right] \notag
\end{align}
\begin{align}
&\geq \sup(P^*)+G^*(\nabla\cdot T)+\inf_{\psi-f\leq u\in BV_0(\Omega)}\int_\Omega T\cdot Du\label{thm1-8-rel2}\\
&\hspace{2.5cm}+\int_\Omega T\cdot D \left[(1-\eta_\epsilon)(w-f)\right]\notag\\
&= \sup(P^*)+\sup_{u\in \cK} \langle \nabla\cdot T,u\rangle+\inf_{u\in \cK}\int_\Omega T\cdot Du+\int_\Omega T\cdot D \left[(1-\eta_\epsilon)(w-f)\right]\notag\\
&= \sup(P^*)+\sup_{u\in \cK} \langle \nabla\cdot T,u\rangle-\sup_{u\in \cK}\langle \nabla\cdot T,u\rangle+\int_\Omega T\cdot D \left[(1-\eta_\epsilon)(w-f)\right]\notag\\
&= \sup(P^*)+\int_\Omega T\cdot D \left[(1-\eta_\epsilon)(w-f)\right]\notag\\
&=  \sup(P^*)+\int_\Omega T\cdot\big[ (w-f)D (1-\eta_\epsilon) +(1-\eta_\epsilon)D(w-f)\big]\notag\\
&\geq \sup(P^*)-\int_{\Omega \setminus \Omega_\epsilon}\phi(x, \frac{\nu_{\Omega}}{\epsilon}(w-f))+\int_\Omega (1-\eta_\epsilon)T\cdot D(w-f)\label{thm1-8-rel3}\\
&= \sup(P^*)-\int_{\Omega \setminus \Omega_\epsilon}\phi(x, \frac{\nu_{\Omega}}{\epsilon})|w-f|-\|T\|_{(\cL^\infty(\Omega))^n}\int_{\Omega \setminus \Omega_\epsilon} \big| D(w-f)\big|.  \notag
\end{align}
Letting $\epsilon \rightarrow 0$, we have $\int_{\Omega \setminus \Omega_\epsilon} \big| D(w-f)\big|\ra0$ and get 
\[\int_{\Omega} \phi(x,Dw)+\int_{\partial\Omega} \phi(x,\nu_{\Omega})|w-f|\geq \sup(P^*)=\inf(P).\]
On the other hand since $BV_f(\Omega) \subset \cA_f$, the above inequality also holds in the opposite direction. Thus
\begin{equation}\label{this}
\inf_{w\in \cA_f} \left( \int_\Omega\phi(x,Dw)+\int_{\partial\Omega} \phi(x,\nu_{\Omega})|w-f| \right)=\inf_{w\in BV_f(\Omega)}\int_\Omega\phi(x,Dw).
\end{equation}
Note also that if $w\in \cA_f$ is a minimizer of \eqref{obstacle-problem0}, then all the above inequalities are equalities. In particular \eqref{first} and \eqref{second} hold because of \eqref{thm1-8-rel1} and \eqref{thm1-8-rel3}, and we can deduce by \eqref{thm1-8-rel2} that
\begin{equation}\label{divergenceIsFree!}
\inf_{\psi-f\leq u\in BV_0(\Omega)}\int_\Omega T\cdot Du
=\lim_{\epsilon \rightarrow 0}\int_\Omega T\cdot D(\eta_\epsilon(w-f)).
\end{equation}
Now let $\omega \Subset \Omega$ and suppose $w>\psi$ on $\omega$. Then for $\varphi \in C^{\infty}_c(\omega)$ and $|t|$ small, we have $w+t \varphi>\psi$ in $\omega$. Hence for $\epsilon$ small enough 
\[w+t\varphi-f=\eta_{\epsilon}(w+t\varphi -f) \ \ \hbox{in}\ \ \omega,\]
and $\psi-f\leq\eta_{\epsilon}(w+t\varphi -f) \in BV_0(\Omega)$. Thus it follows from \eqref{divergenceIsFree!} that 
$$\lim_{\epsilon \rightarrow 0}\int_\Omega T\cdot D(\eta_\epsilon(w-f))\leq\int_\Omega T\cdot D(\eta_\epsilon(w+t\varphi-f)),$$
then
\[\lim_{\epsilon \rightarrow 0}\int_\Omega T\cdot D(t\eta_\epsilon\varphi)\geq0 , \ \ \forall t\in (-\delta,\delta),\]
for some $\delta>0$. Therefore 
\[\langle \nabla \cdot T, \varphi \rangle= 0, \ \ \forall \varphi \in C^{\infty}_0(\omega),\]
and consequently $T\in (L^{\infty}(\Omega))^n$ is divergence-free on $\{w>\psi\}$.  
\end{proof}
\vspace{.3cm}
\noindent 
{\bf Proof of Proposition \ref{relationProp}.} The proof follows from \eqref{this} in the proof of Theorem \ref{structure}, and the argument right before the statement of Proposition \ref{relationProp}. \hfill $\square$

\section{Existence}
In this section we study the existence of the obstacle least gradient problem \eqref{obstacle-problem-BC}, and prove Theorem \ref{Existence}. Consider an arbitrary function $u\in\cA_f$ and let
\begin{align*}
E_t:=&\{x\in\bR^n:  u(x)> t\},\\
L_t:=&\{x\in\bR^n:  f(x)> t\},\\ 
O_t:=&\{x\in\bR^n:  \psi(x)> t\}.
\end{align*}
The following theorem shows that the level sets of the solutions of \eqref{obstacle-problem0} satisfy in an obstacle  $\phi$-area minimizing problem.
\begin{theorem}\label{minimal-superset}
Let $\Omega$ be a bounded Lipschitz domain and $u$ be a solution of \eqref{obstacle-problem0}, then $E_t$ is a solution of the following variational problem,
\begin{equation}\label{variational problem}
\min\{P_\phi(E;\Omega): E\cap\Omega^c=L\cap\Omega^c \text{ and } E\supset O\cap\Omega\},
\end{equation}
in which $O=O_t$ and $L=L_t$.
\end{theorem}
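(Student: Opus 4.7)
The plan is a contradiction argument adapted from the classical Bombieri--De Giorgi--Giusti level set technique to the obstacle setting. Suppose $E_t$ is not a minimizer of \eqref{variational problem}, and let $F$ be a minimizer of that problem (existence follows from the direct method, using $BV$-compactness and lower semicontinuity of $P_\phi$ under the hypothesis C1). Thus $F\cap\Omega^c = L\cap\Omega^c$, $F\supset O\cap\Omega$, and $P_\phi(F;\Omega) < P_\phi(E_t;\Omega)$.

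The key construction is the max--min modification
\[
v(x) := \max(u(x),t)\chi_F(x) + \min(u(x),t)\chi_{F^c}(x).
\]
First I would verify $v\in\cA_f$. The boundary condition $v = f$ on $\Omega^c$ follows from $u = f$ on $\Omega^c$ together with $F\cap\Omega^c = \{f>t\}\cap\Omega^c$: on $F\cap\Omega^c$ we have $f > t$ so $\max(u,t) = f$; on its complement $f \leq t$ so $\min(u,t) = f$. The obstacle condition $v\geq \psi$ is where the hypothesis $F\supset O\cap\Omega$ is essential: on $F$, $v\geq u\geq \psi$; on $F^c\cap\Omega$, the hypothesis forces $\psi\leq t$, so $v = \min(u,t)\geq \psi$. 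A direct computation gives the level sets
\[
\{v>s\} = \begin{cases} E_s\cap F, & s\geq t,\\ E_s\cup F, & s<t, \end{cases}
\]
so by the coarea formula,
\[
\int_{\bR^n}\phi(x,Dv) - \int_{\bR^n}\phi(x,Du) = \int_{-\infty}^t[P_\phi(E_s\cup F) - P_\phi(E_s)]\,ds + \int_t^\infty[P_\phi(E_s\cap F) - P_\phi(E_s)]\,ds.
\]

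The most technical step is to show that this right-hand side is strictly negative, contradicting the minimality of $u$ (so $\int\phi(Dv)\geq\int\phi(Du)$) established via Proposition \ref{relationProp}. I would apply Lemma \ref{perimeter inequality} pointwise in $s$ together with the variational characterization of $F$: for $s<t$, the set $F\cap E_s$ is itself a competitor in \eqref{variational problem}, since $(F\cap E_s)\cap\Omega^c = L\cap\Omega^c$ (using $L_s\supset L$ for $s<t$) and $F\cap E_s \supset O\cap\Omega$, so minimality of $F$ gives $P_\phi(F)\leq P_\phi(F\cap E_s)$; substituting into submodularity yields $P_\phi(E_s\cup F)\leq P_\phi(E_s)$. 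Symmetrically, $P_\phi(E_s\cap F)\leq P_\phi(E_s)$ for $s>t$, so each integrand on the right-hand side is nonpositive. The main obstacle is to upgrade this to a \emph{strict} inequality for the whole integral using the assumed gap $P_\phi(F;\Omega)<P_\phi(E_t;\Omega)$: one propagates the strict deficit at the level $s=t$ to a positive-measure set of nearby levels by combining $L^1$-continuity of the map $s\mapsto E_s$ at a.e.\ $t$ with lower semicontinuity of $P_\phi$, producing the desired contradiction.
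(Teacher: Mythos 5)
Your competitor construction, the verification that $v\in\cA_f$, the level-set identity, and the submodularity-plus-competitor argument showing that each integrand is nonpositive are all correct, and up to that point your route (contradiction via a max--min competitor and the coarea formula) is genuinely different from the paper's: the paper never decomposes into levels, but instead shows that the truncation $\max(u,t)$, and then the rescaled truncations $\min(1,\epsilon^{-1}\max(u-t,0))$, remain solutions of \eqref{obstacle-problem0} for correspondingly truncated obstacle and boundary data, and passes to the limit $\epsilon\to0$ via the $L^1$-stability result, Lemma \ref{stability of solution}. The genuine gap is your final step: with your competitor $v$, the right-hand side of the coarea identity can never be strictly negative, because it is identically zero. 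Indeed, minimality of $u$ forces $\int_{\bR^n}\phi(x,Dv)-\int_{\bR^n}\phi(x,Du)\geq0$, while you have proved that every integrand is $\leq0$; hence both integrals vanish and, for a.e.\ $s$, $P_\phi(E_s\cup F)=P_\phi(E_s)$ (for $s<t$) and $P_\phi(E_s\cap F)=P_\phi(E_s)$ (for $s>t$). This conclusion holds whether or not $P_\phi(F;\Omega)<P_\phi(E_t;\Omega)$: the assumed deficit lives at the single level $s=t$, which carries zero measure in the coarea integral, so it never enters your inequalities at all, and there is no positive-measure set of levels with a strict deficit to be found.

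Nor can lower semicontinuity import the deficit, because it produces inequalities pointing the wrong way. Combining the a.e.\ identities above with Lemma \ref{perimeter inequality} and the minimality of $F$ (note that $E_s\cup F$ is itself admissible in \eqref{variational problem}) gives $P_\phi(E_s\cup F)=P_\phi(F)$ for a.e.\ $s>t$; letting $s\searrow t$, where $E_s\nearrow E_t$ for \emph{every} $t$, lower semicontinuity yields only $P_\phi(E_t\cup F)\leq P_\phi(F)$, i.e.\ that $E_t\cup F$ is another minimizer of \eqref{variational problem} --- never that $E_t$ itself is one, and never a contradiction with $P_\phi(F)<P_\phi(E_t)$. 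The upper bound on $P_\phi(E_t)$ that you would need cannot come from lower semicontinuity; transferring competitors to $E_t$ along $L^1$-convergent levels requires knowing the levels are minimizers, which is circular. To close the argument you must add an ingredient: either (i) thicken the competitor, e.g.\ take $\max(u,t+\epsilon)$ on $F\cap\Omega$, so that the band of levels $(t,t+\epsilon)$ contributes roughly $\epsilon\bigl[P_\phi(F)-P_\phi(E_t)\bigr]<0$ via $\liminf_{s\searrow t}P_\phi(E_s)\geq P_\phi(E_t)$, at the price of controlling the extra boundary penalty created on $\partial\Omega\cap\{t<f<t+\epsilon\}$; or (ii) follow the paper: prove truncations of $u$ remain solutions and invoke an $L^1$-stability lemma such as Lemma \ref{stability of solution}. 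Some such ingredient is unavoidable in particular because the theorem asserts the conclusion for \emph{every} $t$, while your ``$L^1$-continuity of $s\mapsto E_s$ at a.e.\ $t$'' says nothing about levels with $\cH^n(\{u=t\})>0$.
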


\begin{remark}
It is not difficult to see that $\partial E_t\setminus\bar O_t$ is locally $\phi$-minimizing in $\Omega$ as well as $\partial E_t\cap\bar O_t$ is locally $\phi$-super minimizing in $\Omega$. 
\end{remark}

In order to prove Theorem \ref{minimal-superset}, we need the following lemma. It will also 
help us to study the relation between the minimizers of \eqref{obstacle-problem-BC} and \eqref{obstacle-problem0}.
Therein $v^+$ and $v^-$ stand for  the outer and inner trace of $v\in BV(\bR^n)$ on $\partial\Omega$. 

\begin{lemma}\label{stability of solution}
Assume $u_k$ is a solution of  \eqref{obstacle-problem0} for the obstacle $\psi_k$ such that $\psi_k\nearrow\psi$  and
$$u_k\longrightarrow u\text{ in }\cL^1(\Omega)\quad\text{ and }\quad u_k^\pm\longrightarrow u^\pm\text{ in }\cL^1(\partial\Omega).$$
Then $u$ is a solution of  \eqref{obstacle-problem0}  for the obstacle $\psi$.
\end{lemma}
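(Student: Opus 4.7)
The plan is to verify that the candidate limit $u$ is admissible for the obstacle problem with $\psi$, and then pass the minimality inequality satisfied by each $u_k$ to the limit using semi-continuity of the interior term and continuity of the boundary term.

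First, I would check admissibility. Since every $u_k$ equals $f$ on $\Omega^c$ and $u_k \to u$ in $L^1(\Omega)$, the natural extension of $u$ by $f$ outside $\Omega$ satisfies $u = f$ a.e.\ on $\Omega^c$. The uniform bound $\int_\Omega \phi(x, Du_k) \le E(f)$ together with (C1) yields a uniform $BV$ bound, and lower semi-continuity places $u\in BV(\bR^n)$. For the obstacle constraint, the monotonicity $\psi_k \nearrow \psi$ is essential: for every $j \ge k$ one has $u_j \ge \psi_j \ge \psi_k$ a.e.\ in $\Omega$, so passing to the $L^1$ limit (along an a.e.-convergent subsequence) gives $u \ge \psi_k$ a.e. Taking $k\to\infty$ then yields $u \ge \sup_k \psi_k = \psi$ a.e., so $u$ is admissible for the obstacle $\psi$.

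Next, I would test against an arbitrary competitor $v\in \cA_f$ for the obstacle $\psi$. Since $v \ge \psi \ge \psi_k$ for every $k$, such $v$ is also admissible for the obstacle $\psi_k$, so the minimality of $u_k$ gives
\[
\int_\Omega \phi(x, Du_k) + \int_{\partial \Omega} \phi(x, \nu_\Omega)|u_k - f|\,d\cH^{n-1} \le \int_\Omega \phi(x, Dv) + \int_{\partial \Omega} \phi(x, \nu_\Omega)|v - f|\,d\cH^{n-1}.
\]
Now I would pass $k \to \infty$. For the interior term, the duality formula \eqref{convexity} expresses $\int_\Omega \phi(x, Du)$ as a supremum of linear functionals continuous in $L^1_{\mathrm{loc}}$, hence
\[
\int_\Omega \phi(x, Du) \le \liminf_{k\to\infty} \int_\Omega \phi(x, Du_k).
\]
For the boundary term, the hypothesis $u_k^- \to u^-$ in $L^1(\partial\Omega)$ together with the uniform bound $\phi(x,\nu_\Omega) \le \alpha^{-1}$ from (C1) yields by dominated convergence
\[
\int_{\partial \Omega} \phi(x, \nu_\Omega)|u_k - f|\,d\cH^{n-1} \longrightarrow \int_{\partial \Omega} \phi(x, \nu_\Omega)|u - f|\,d\cH^{n-1},
\]
where $|u-f|$ is interpreted with the interior trace. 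Combining these yields the desired minimality of $u$ for the obstacle $\psi$.

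The main obstacle in the argument is the very step where admissibility is established: one cannot merely invoke pointwise a.e.\ convergence of $\psi_k$ to $\psi$, since the inequality $u_k \ge \psi_k$ would not directly survive a weak limit. The monotonicity $\psi_k \nearrow \psi$ is used in a crucial way to extract, for each fixed $k$, a single inequality $u_j \ge \psi_k$ that is \emph{stable} under $j\to\infty$. The rest of the proof is standard: lower semi-continuity of the $\phi$-total variation and $L^1$-continuity of the boundary term driven by the trace convergence hypothesis.
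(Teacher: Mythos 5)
Your proof is correct and follows essentially the same route as the paper's: the key observation in both is that the monotonicity $\psi_k\nearrow\psi$ makes every competitor for the obstacle $\psi$ admissible for each obstacle $\psi_k$, after which one passes to the limit using lower semicontinuity of the $\phi$-total variation (via \eqref{convexity}) for the interior term and $L^1$ convergence of traces together with $\phi(x,\nu_\Omega)\le\alpha^{-1}$ for the boundary term. Two differences are worth recording. First, you verify that the limit $u$ actually satisfies the constraint $u\ge\psi$ (from $u_j\ge\psi_j\ge\psi_k$ for $j\ge k$, a.e.\ convergence of a subsequence, and then $k\to\infty$); the paper's proof omits this admissibility check, so on this point your argument is more complete. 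Second, and in the other direction, you treat the exterior datum $f$ as fixed, which renders the hypothesis $u_k^+\to u^+$ vacuous, and indeed your proof never uses it. The paper instead works with the functional $I_\phi(v;\Omega,g)$ and compares $I_\phi(u_k;\Omega,u_k^+)$ with $I_\phi(v;\Omega,u_k^+)$, i.e.\ it allows the exterior values of $u_k$ to vary with $k$; this extra generality is exactly what is needed when the lemma is invoked in the proof of Theorem \ref{minimal-superset}, where the approximating functions $\chi_{\epsilon,t}$ solve \eqref{obstacle-problem0} with boundary data $f_{\epsilon,t}$ depending on $\epsilon$. Your argument adapts mechanically to that setting: replace $f$ by $f_k=u_k^+$ in the minimality inequality for $u_k$ and absorb the resulting error term $\alpha^{-1}\int_{\partial\Omega}|u_k^+-u^+|\,d\cH^{n-1}$, which vanishes in the limit by hypothesis. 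So there is no gap for the statement as literally written, but the version the paper actually proves, and later uses, is the slightly stronger one with varying exterior data.
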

\begin{proof} The proof is similar to the proof of Lemma 2.7 in \cite{Jerrard-Moradifam-Ardian-2018} and we present it here for the sake of completeness. Given $g\in L^1(\partial\Omega;\cH^{n-1})$, define
\begin{equation*}
I_\phi(v;\Omega,g):=\int_{\partial\Omega}\phi(x,\nu_\Omega)|g-v^-|\,d\cH^{n-1}+\int_\Omega\phi(x,Dv),
\end{equation*}
where $\nu_\Omega$ denotes the outer unit normal to $\Omega$.
From the upper semicontinuity of the $\phi$-total variation 
$$\int_\Omega\phi(x,Du)\leq\liminf_k\int_\Omega\phi(x,Du_k),$$
and the $\cL^1$ convergence of the trace, implies that
\begin{equation}\label{semicontinuity of operator I}
I_\phi(u;\Omega,u^+)\leq\liminf_k I_\phi(u_k;\Omega,u^+_k).
\end{equation}
Now for any $v\in BV(\bR^n)$ such that $v\geq\psi$, then $v\geq\psi_k$ and we have 
\begin{align*}
I_\phi(u_k;\Omega,u_k^+)&\leq I_\phi(v;\Omega,u_k^+)\\
&\leq I_\phi(v;\Omega,u^+)+\int_{\partial\Omega}\phi(x,\nu_\Omega)|u^+-u_k^+|\,d\cH^{n-1}\\
&\leq I_\phi(v;\Omega,u^+)+\alpha^{-1}\int_{\partial\Omega}|u^+-u_k^+|\,d\cH^{n-1}.
\end{align*}
It follows from this and \eqref{semicontinuity of operator I} that $I_\phi(u;\Omega,u^+)\leq I_\phi(v;\Omega,u^+)$.
\end{proof}
\begin{proof}[Proof of Theorem \ref{minimal-superset}]
For $t\in\bR$, let $u_1:=\max(u,t)$, $u_2:=u-u_1$, $\psi_1:=\max(\psi,t)$. Consider $v\in BV(\bR^n)$ such that $v=u_1$ a.e. in $\Omega^c$ and $\psi_1\leq v$, then $\psi\leq\psi_1+u_2\leq v+u_2$ and $v+u_2=u$ a.e. in $\Omega^c$, where we have used the assumption $\psi\leq u$. Since $u$ is a solution of \eqref{obstacle-problem0}, we can write
\begin{align*}
\int_\Omega\phi(x,Du_1)+\int_\Omega\phi(x,Du_2)=&\int_\Omega\phi(x,Du)\\
\leq&\int_\Omega\phi(x,D(v+u_2))\\
\leq&\int_\Omega\phi(x,Dv)+\int_\Omega\phi(x,Du_2).
\end{align*}
Hence $u_1$ is also a solution of \eqref{obstacle-problem0} for the obstacle $\psi_1$ and the boundary condition $f_1:=\max(f,t)$. Repeating the same argument, one verifies that
\begin{equation*}
\chi_{\epsilon,t}:=\min(1,\frac1\epsilon u_1)=\begin{cases}
0&\text{if }u\leq t,\\
\epsilon^{-1}(u-t)&\text{if }t\leq u\leq t+\epsilon,\\
1&\text{if }t+\epsilon\leq u,
\end{cases}
\end{equation*}
is also a solution of \eqref{obstacle-problem0} for the obstacle $\psi_{\epsilon,t}:=\min(1,\frac1\epsilon\psi_1)$, and boundary condition $f_{\epsilon,t}:=\min(1,\frac1\epsilon f_1)$. 

It is straightforward to check that 
$$\chi_{\epsilon,t}\rightarrow\chi_t:=\chi_{E_t}\text{ in }\cL^1_{\text{loc}}(\bR^n), \qquad \chi^\pm_{\epsilon,t}\rightarrow\chi^\pm\text{ in }\cL^1(\partial\Omega;\cH^{n-1}).$$
Notice that $\psi_{\epsilon,t}\nearrow\chi_{O_t}$. Thus Lemma \ref{stability of solution} implies that $\chi_{E_t}$ is a solution of \eqref{obstacle-problem0} for the obstacle $\chi_{O_t}$ and the boundary condition $\chi_{L_t}$. 

\end{proof}

Next we can use the barrier condition to prove the following lemma proof of which is similar to Lemma 3.4 in \cite{Jerrard-Moradifam-Ardian-2018} and we omit it. Remind that for a measurable subset $E$ of $\bR^n$, we define  
\begin{equation*}
E^{(1)}:=\{x\in\bR^n:\lim_{r\rightarrow0}\frac{\cH^n(B(r,x)\cap E)}{\cH^n(B(r))}=1\}.
\end{equation*}

\begin{lemma}\label{boundary of superset}
Let $\Omega$ be a bounded Lipschitz domain satisfying the barrier condition with respect to $\phi$, and assume that 
$E$ is a solution of \eqref{variational problem}. Then
$$\{x\in\part\Omega\cap\part E^{(1)}: B(\epsilon,x)\cap \part E^{(1)}\subset\bar\Omega\;\text{for some }\epsilon>0\}\subset\bar O.$$
\end{lemma}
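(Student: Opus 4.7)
The plan is to argue by contradiction, following the strategy of Lemma 3.4 in \cite{Jerrard-Moradifam-Ardian-2018} while using that the obstacle constraint becomes inactive at any $x_0$ outside $\bar O$. Suppose $x_0 \in \partial\Omega \cap \partial E^{(1)}$ admits some $\epsilon > 0$ with $B(\epsilon,x_0) \cap \partial E^{(1)} \subset \bar\Omega$, but $x_0 \notin \bar O$. Since $\bar O$ is closed, after shrinking $\epsilon$ I can also arrange that $B(\epsilon,x_0) \cap \bar O = \varnothing$. The crucial consequence is that any modification of $E$ supported in $\Omega \cap B(\epsilon,x_0)$ automatically preserves both constraints in \eqref{variational problem}: the boundary data $F \cap \Omega^c = L \cap \Omega^c$ is untouched, and the obstacle containment $F \supset O \cap \Omega$ holds because $O \cap \Omega \cap B(\epsilon,x_0) = \varnothing$. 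Thus inside $B(\epsilon,x_0)$ the problem reduces to ordinary $\phi$-area minimization, exactly as in the obstacle-free setting of \cite{Jerrard-Moradifam-Ardian-2018}.

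Next I would invoke the barrier condition at $x_0$ to produce a set $V \subset \Omega$ minimizing $P_\phi(\cdot\,;\bR^n)$ among the competitors $W \subset \Omega$ with $W \setminus B(\epsilon,x_0) = \Omega \setminus B(\epsilon,x_0)$, and satisfying $\partial V^{(1)} \cap \partial\Omega \cap B(\epsilon,x_0) = \varnothing$. Geometrically $V$ is a strictly inward perturbation of $\Omega$ near $x_0$, and intuitively it shows that $\partial\Omega$ can be shaved at $x_0$ to decrease perimeter. The hypothesis $B(\epsilon,x_0) \cap \partial E^{(1)} \subset \bar\Omega$ forces $E$ to have constant density ($0$ or $1$) in the open exterior $B(\epsilon,x_0) \setminus \bar\Omega$, splitting the argument into two symmetric cases. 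In the density-$1$ case I would take the competitor $F := E \cup (\Omega \setminus V)$, inflating $E$ inside $\Omega$ near $x_0$; in the density-$0$ case I would take $F := E \cap V$, shaving $E$ away from $\partial\Omega$. Both are admissible for \eqref{variational problem} by the reduction above, and both agree with $E$ outside $B(\epsilon,x_0)$.

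The final step is the perimeter comparison. Applying Lemma \ref{perimeter inequality} to the pair $(E,\Omega\setminus V)$ or $(E,V)$ on $A = \Omega$, I get
\[
P_\phi(F;\Omega) + P_\phi(\text{other intersection/union};\Omega) \leq P_\phi(E;\Omega) + P_\phi(V;\Omega),
\]
which combined with $E$'s minimality would force the "other" piece to carry at least as much $\phi$-perimeter inside $\Omega$ as $V$ does. This contradicts the defining minimality of $V$ together with the conclusion $\partial V^{(1)} \cap \partial\Omega \cap B(\epsilon,x_0) = \varnothing$: the latter says that, unlike $\Omega$ itself, $V$ has no perimeter contribution concentrated on $\partial\Omega \cap B(\epsilon,x_0)$, and at the same time $x_0 \in \partial E^{(1)} \cap \partial\Omega$ forces the competitor to genuinely differ from $E$ on a set of positive measure. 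The main obstacle is the boundary-trace bookkeeping: the $\phi$-perimeter measures of $E$, $V$, and $\Omega$ all carry contributions on $\partial\Omega$ which must be tracked carefully to extract a strict improvement. I would handle this exactly as in \cite{Jerrard-Moradifam-Ardian-2018}, using that the trace of $\chi_E$ on $\partial\Omega$ interacts with the trace of $\chi_V$ through the Anzellotti pairing introduced in \eqref{trace}, so that the strict improvement afforded by the barrier condition transfers to the competitor $F$ and contradicts $P_\phi(E;\Omega) \leq P_\phi(F;\Omega)$.
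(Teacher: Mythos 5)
Your localization step is exactly right, and it is the one genuinely new ingredient this lemma needs beyond the obstacle-free case: since $x_0\notin\bar O$, shrinking $\e$ so that $B(\e,x_0)\cap\bar O=\varnothing$ makes the obstacle constraint vacuous for any competitor agreeing with $E$ outside $B(\e,x_0)$, reducing the statement to Lemma 3.4 of \cite{Jerrard-Moradifam-Ardian-2018} — which is precisely the reduction the paper intends (it omits the proof, citing that lemma). The density dichotomy on $B(\e,x_0)\setminus\bar\Omega$ (valid because this set is connected for Lipschitz $\Omega$ and small $\e$) and the plan to compare against the barrier set $V$ are also the correct skeleton. The gaps are in the execution.

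First, your density-zero competitor $F=E\cap V$ is inadmissible for \eqref{variational problem}: since $V\subset\Omega$, the set $E\cap V$ discards the prescribed exterior data $L\cap\Omega^c$, and it does \emph{not} agree with $E$ outside $B(\e,x_0)$ as you claim (outside $B$ one has $E\cap V=E\cap\Omega\neq E$ wherever $L\cap\Omega^c$ has positive measure). The surgery must be localized, e.g.\ $F=E\cap(V\cup B^c)$, which is admissible precisely because $|E\cap(B\setminus\bar\Omega)|=0$ in this case. Second, and more seriously, the final contradiction is never actually derived, and the route you describe cannot work as stated: the barrier condition is purely qualitative — every minimizer $V$ of the localized problem satisfies $\part V^{(1)}\cap\part\Omega\cap B=\varnothing$ — and asserts no strict perimeter decrease, so there is no ``strict improvement'' to transfer to $F$ and play against $P_\phi(E;\Omega)\le P_\phi(F;\Omega)$; equality of perimeters is entirely possible. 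The argument of \cite{Jerrard-Moradifam-Ardian-2018} instead shows that the localized surgery of $E$, say $W=(E\cap\Omega\cap B)\cup(\Omega\setminus B)$ in the density-zero case, is \emph{itself} a minimizer of the problem defining the barrier condition; applying the barrier condition to $W$ then gives $\part W^{(1)}\cap\part\Omega\cap B=\varnothing$, contradicting $x_0\in\part W^{(1)}$ (note $W^{(1)}\cap B=E^{(1)}\cap B$). Proving that $W$ is such a minimizer is exactly where the cut-and-paste estimates on $\part B$ (generic choice of radius, matching of inner and outer $BV$ traces, and the discrepancy between $P_\phi(\cdot\,;\Omega)$ in \eqref{variational problem} and $P_\phi(\cdot\,;\bR^n)$ in the barrier condition) are needed — this is the step you wave at. Two further slips: your displayed inequality misapplies Lemma \ref{perimeter inequality}, since for the pair $(E,\Omega\setminus V)$ the right-hand side carries $P_\phi(\Omega\setminus V;\Omega)$, not $P_\phi(V;\Omega)$, and these differ exactly by pieces of $\part\Omega$, the quantity at stake; and the Anzellotti pairing \eqref{trace} is irrelevant here — it pairs $L^\infty$ vector fields having divergence with $BV$ functions and plays no role in comparing perimeters of sets.
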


\begin{proof}[Proof of Theorem \ref{Existence}]
The proof follows from Proposition \ref{relationProp}, Lemma \ref{boundary of superset}, Theorem \ref{minimal-superset}, and an argument similar to that of Theorem 1.1 in \cite{Jerrard-Moradifam-Ardian-2018}. 
\end{proof}

\section{Maximum and comparison principles}

This section is devoted maximum and comparison principles which will be our main tools in proving uniqueness and regularity results. At the first, we review some well-known definition and results about the regularity theory for minimal surfaces.


\begin{definition}
Let $E\subset\bR^n$. A point $x\in\partial E$ is called a regular point if there exists $\rho>0$ such that $\partial E\cap B(x,\rho)$ is a $C^2$ hypersurface. We denote the set of all regular points of $\partial E$ by $\emph{reg}(\partial E)$. We say that $x$ is a singular point if $x \in \emph{sing}(\partial E)=\partial E\setminus\emph{reg}(\partial E)$. 
\end{definition}

The following estimate on the size of singular sets of $\phi$-area minimizing sets has been proved in \cite{Schoen-Simon-Almgren-1977}, (see also Remarks 2.7 and 2.8 in  \cite{Jerrard-Moradifam-Ardian-2018}).
\begin{theorem}\label{Hausdorff dimension  of singularity}
Let $\Omega\subset\bR^n$, and assume $\phi:\Omega\times\bR^n\longrightarrow\bR$ satisfies C1-C4. 
If $E$ is $\phi$-area minimizing in $\Omega$, then
\begin{equation*}
\left\{\begin{array}{ll}
\cH^{n-3}(\emph{sing}(\partial E^{(1)})\cap\Omega)<\infty,&\text{if }n\geq4,\\
\emph{sing}(\partial E^{(1)})\cap\Omega=\varnothing,&\text{if }n\leq3.
\end{array}\right.
\end{equation*}
\end{theorem}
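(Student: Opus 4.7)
The plan is to follow the classical Federer-Almgren-Schoen-Simon regularity theory for parametric elliptic integrands, adapted to the anisotropic setting by using C3-C4. Three ingredients are required: (a) compactness of the class of $\phi$-minimizing sets under $L^1_{loc}$ convergence, (b) an anisotropic $\varepsilon$-regularity (excess decay) theorem, and (c) a classification of low-dimensional $\phi$-minimizing cones. These are glued together by Federer's dimension reduction.

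First I would set up the blow-up machinery. The ellipticity bound C1 together with comparison with half-spaces yields uniform density estimates $c r^n \le |E \cap B(x_0,r)|,\ |B(x_0,r)\setminus E| \le (1-c) r^n$ for $x_0 \in \partial E^{(1)} \cap \Omega$ and small $r$. A standard BV-compactness argument, combined with the fact that uniformly elliptic parametric integrands are closed under $L^1_{loc}$ limits (this uses C3-C4 to preserve ellipticity in the limit), then implies that at each $x_0 \in \partial E^{(1)} \cap \Omega$ and along any $r_k \downarrow 0$, a subsequence of the rescalings $r_k^{-1}(E - x_0)$ converges in $L^1_{loc}$ to a $\phi(x_0,\cdot)$-area minimizing cone $C \subset \bR^n$.

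Next I would invoke the anisotropic $\varepsilon$-regularity theorem: there exists $\varepsilon_0=\varepsilon_0(\phi)>0$ such that if the spherical excess of $\partial E^{(1)}$ in $B(x_0,r)$ is below $\varepsilon_0$, then $\partial E^{(1)}\cap B(x_0,r/2)$ is a $C^{1,\alpha}$ hypersurface. The proof passes through the second variation, whose uniform ellipticity and boundedness are precisely C3 and C4; one then runs a Caccioppoli-type inequality and a tilt-excess decay iteration. If a tangent cone at $x_0$ were a half-space, then the excess at $x_0$ would tend to zero, forcing $x_0$ to be a regular point by $\varepsilon$-regularity; hence at every $x_0 \in \mathrm{sing}(\partial E^{(1)})\cap\Omega$ the tangent cone is not a half-space. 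Combined with the low-dimensional classification — every $\phi$-minimizing cone in $\bR^n$ with $n\le 3$ is a half-space (trivial in $\bR^2$, a Bernstein-type result for two-dimensional anisotropic minimal cones in $\bR^3$ which again leverages C3) — this immediately yields $\mathrm{sing}(\partial E^{(1)})\cap\Omega=\varnothing$ when $n\le 3$.

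For $n\ge 4$ I would close the argument by Federer's dimension reduction. Let $s$ be the supremum of $t$ with $\mathcal{H}^t(\mathrm{sing}(\partial E^{(1)})\cap\Omega)>0$. A density-point argument at a bad point produces a tangent cone whose singular set still has positive $\mathcal{H}^s$-measure; picking a non-vertex singular point of that cone, translating the vertex there and blowing up again splits off a radial line and reduces the problem to a minimizing cone in $\bR^{n-1}$. Iterating, after at most $\lfloor s\rfloor$ rounds one obtains a $\phi_\infty$-minimizing cone in $\bR^{n-\lfloor s\rfloor}$ with an isolated singularity at $0$; by the $n\le 3$ classification this forces $n-\lfloor s\rfloor\ge 4$, i.e.\ $s\le n-4$, and a standard covering argument upgrades this to $\mathcal{H}^{n-3}(\mathrm{sing})<\infty$. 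The main obstacle in the whole scheme is (b), the anisotropic $\varepsilon$-regularity theorem: its proof is genuinely more delicate than in the isotropic case because rotations no longer preserve $\phi$, and this is exactly what Schoen-Simon-Almgren carry out under the hypotheses C3-C4.
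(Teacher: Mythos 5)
First, a point of orientation: the paper does not prove this theorem at all --- it quotes it from Schoen--Simon--Almgren \cite{Schoen-Simon-Almgren-1977} (see also Remarks 2.7 and 2.8 of \cite{Jerrard-Moradifam-Ardian-2018}), so your proposal has to be judged against that literature proof. Your outline reproduces the correct overall architecture of that theory (anisotropic $\varepsilon$-regularity under C3--C4 is indeed its core, together with a low-dimensional classification), but it contains a genuine gap, and the gap is precisely the feature that makes the anisotropic theory hard. Your blow-up step asserts that ``standard BV-compactness'' yields convergence of the rescalings to a $\phi(x_0,\cdot)$-minimizing \emph{cone}. Compactness only yields a minimizing limit \emph{set}; to conclude that the limit is a cone one needs (almost) monotonicity of the density ratio $r^{1-n}P_\phi(E;B(x_0,r))$, and for anisotropic integrands there is no De Giorgi/Allard monotonicity formula --- this is the central, well-known obstruction of the subject, and C3--C4 do not restore it. (For minimizers, comparison with the cone over the slice $E\cap\partial B(x_0,r)$ gives a substitute whose constant is governed by the ellipticity ratio $\alpha^{-2}$; that yields density bounds, but neither the cone property of blow-ups nor upper semicontinuity of densities.) Both halves of your argument route through this unavailable machinery: the $n\le3$ conclusion uses ``every tangent cone is a half-space,'' and the whole final paragraph --- locating a point of positive upper $\mathcal{H}^s$-density whose tangent cone inherits a large singular set, splitting off a line at a non-vertex singular point, iterating --- is Federer reduction, which cannot be run as described.

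The end of your argument displays the symptom. If your reduction scheme worked, then combined with planarity of anisotropic minimizing cones in $\mathbb{R}^3$ (that ingredient is fine: it follows from the $n\le3$ statement) it would give $\dim\big(\mathrm{sing}(\partial E^{(1)})\big)\le n-4$, i.e.\ $\mathcal{H}^{n-3}\big(\mathrm{sing}(\partial E^{(1)})\big)=0$. That is strictly stronger than the theorem ($\mathcal{H}^{n-3}<\infty$ permits a singular set of dimension exactly $n-3$) and strictly stronger than what is known: the $n-3$ bound of \cite{Schoen-Simon-Almgren-1977} has never been improved, and whether it can be lowered to $n-4$ (which examples of singular anisotropic minimizing cones in $\mathbb{R}^4$ show would be optimal) remains open. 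Your closing remark that a ``standard covering argument upgrades'' $s\le n-4$ to $\mathcal{H}^{n-3}<\infty$ also inverts the logic: finiteness of $\mathcal{H}^{n-3}$ is weaker than its vanishing, not an upgrade. What \cite{Schoen-Simon-Almgren-1977} actually do for $n\ge4$ is obtain the quantitative $\mathcal{H}^{n-3}$ estimate directly, through stability-inequality and excess-decay estimates run at all scales, precisely because the clean tangent-cone/Federer scheme is unavailable; that is why their conclusion has the form it does. So the repair is not to patch your final paragraph but to replace it: either cite \cite{Schoen-Simon-Almgren-1977} for the singularity estimate, as the paper does, or reproduce their stability-based argument.
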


We shall also need the following proposition which states that every connected components of regular points of  a $\phi$-area minimizing set $E$ in $\Omega$ must reach the boundary $\partial \Omega$. 

\begin{proposition}\label{component-regular-points}
Let $\Omega$ be a bounded Lipschitz domain with connected boundary and assume that $E\subset\bR^n$ is a solution of 
\eqref{variational problem} for some sets $(L,O)$. If $R$ is a nonemtpy connected component of $\emph{reg}(\partial E^{(1)})\cap\Omega$, then $\bar R\cap\partial\Omega\neq\varnothing$ or $\bar R\cap\bar O\neq\varnothing$.
\end{proposition}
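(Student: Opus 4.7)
We argue by contradiction and reduce the obstacle version to its non-obstacle analog by localizing away from $\bar O$. Suppose $R$ is a nonempty connected component of $\emph{reg}(\partial E^{(1)}) \cap \Omega$ with both $\bar R \cap \partial\Omega = \varnothing$ and $\bar R \cap \bar O = \varnothing$. Since $\bar R$ is a closed subset of $\bar\Omega$, it is compact, and the hypothesis guarantees that it is compactly contained in the open set $\Omega' := \Omega \setminus \bar O$.

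The first step is to show that $E$ is an \emph{unconstrained} $\phi$-area minimizer in a neighborhood of $\bar R$. Choose an open $U$ with $\bar R \subset U \Subset \Omega'$. For any competitor $F$ with $F \triangle E \Subset U$, we have $F = E$ on $\Omega^c$ and on $\bar O \cap \Omega$, so $F$ still satisfies $F \cap \Omega^c = L \cap \Omega^c$ and $F \supset O \cap \Omega$. By the minimality of $E$ in \eqref{variational problem}, $P_\phi(E) \leq P_\phi(F)$, so $E$ is $\phi$-area minimizing in $U$ \emph{without} the obstacle constraint. In particular, $R \cap U$ is still a nonempty connected component of $\emph{reg}(\partial E^{(1)}) \cap U$ whose closure is compact in $U$.

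The second step is to derive a contradiction from the existence of such a compactly contained regular component for an unconstrained $\phi$-area minimizer; this is the content of the analogous statement in the obstacle-free setting (cf.\ \cite{Jerrard-Moradifam-Ardian-2018}). I would run the standard barrier argument: slide a $\phi$-super-minimizing Wulff-type region (whose strict $\phi$-mean-convexity is furnished by C3) from outside $\bar R$ until it first touches $\bar R$; at a regular touching point, the strict maximum principle applied to the quasilinear $\phi$-minimal surface equation forces $R$ to coincide locally with the barrier, which is impossible because the barrier has strictly positive $\phi$-mean curvature while $R$ has vanishing $\phi$-mean curvature.

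The principal technical obstacle is ensuring that the first touching point is a regular point rather than a singular point of $\partial E^{(1)}$. Theorem \ref{Hausdorff dimension  of singularity} is the decisive ingredient here, since it provides $\mathcal{H}^{n-3}(\emph{sing}(\partial E^{(1)}) \cap \Omega) < \infty$ (and emptiness when $n \leq 3$). Because the singular set has codimension at least three, one can either appeal to a strong maximum principle at singular points, or, more elementarily, vary the center of the barrier within an $n$-parameter family of translations to arrange that first contact occurs at a regular point, where the previous step applies and yields the contradiction.
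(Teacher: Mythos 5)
Your first step---localizing away from the obstacle to conclude that $E$ is an \emph{unconstrained} $\phi$-area minimizer in a neighborhood $U$ of $\bar R$---is exactly the reduction the paper makes; the paper then simply cites Lemma 4.2 of \cite{Jerrard-Moradifam-Ardian-2018} (the obstacle-free form of this proposition, for a bounded Lipschitz domain with connected boundary) and stops. Two remarks on your version of this step: you cannot cite that lemma for your localized set $U$, since nothing guarantees that $U$ has connected boundary --- the lemma has to be invoked in $\Omega$ itself, with the observation that its proof only uses minimality of $E$ near $\bar R$ together with the connectedness of $\partial\Omega$. Relatedly, your argument never uses the connected-boundary hypothesis at all, which is a warning sign that your self-contained replacement for the second step cannot be complete as written.

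The genuine gap is in the sliding-barrier argument, for $n\geq 4$. The Hopf-type contradiction requires first contact with an \emph{enclosing} convex barrier ($R$ must lie on the concave side of the Wulff shape), and this confines all possible contact points to the exposed set $\bar R\cap\partial\,\mathrm{conv}(\bar R)$: indeed, if $C$ is convex, $C\supseteq\bar R$ and $x\in\bar R\cap\partial C$, then $x\notin \mathrm{int}\,(\mathrm{conv}\,\bar R)\subseteq\mathrm{int}\,C$, so $x\in\partial\,\mathrm{conv}(\bar R)$. Consequently, varying the center (or radius) of the barrier through an $n$-parameter family can never produce contact outside this \emph{fixed} set, and nothing in Theorem \ref{Hausdorff dimension  of singularity} prevents $\bar R\cap\partial\,\mathrm{conv}(\bar R)$ from lying entirely inside $\emph{sing}(\partial E^{(1)})$: a compact set of finite $\cH^{n-3}$ measure can have a full-dimensional convex hull whose boundary meets $\bar R$ only in that set. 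So your ``more elementary'' fix fails. Your alternative fix, a strong maximum principle valid at singular contact points, is Simon's theorem \cite{Simon-1987} in the isotropic case; for general anisotropic $\phi$ no such result is available within this paper's toolkit, and the maximum principles the paper does establish come \emph{after} this proposition in the logical order (Step 5 of the proof of Theorem \ref{comparison-sets} invokes Proposition \ref{component-regular-points}), so appealing to them would be circular. To close the gap one would need, e.g., a tangent-cone plus Allard-type regularity argument showing that a point of one-sided contact with a smooth strictly $\phi$-mean-convex barrier must be regular; you do not supply this, and it also forces hypotheses (C3, C5) beyond those under which the cited obstacle-free lemma holds. This is precisely what the paper avoids by quoting Lemma 4.2 of \cite{Jerrard-Moradifam-Ardian-2018}, whose proof is a cut-and-paste construction exploiting the connectedness of $\partial\Omega$ rather than a sliding argument. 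For $n\leq 3$, where the singular set is empty, your argument is essentially complete.
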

\begin{proof}
The proof follows directly from Lemma 4.2 in \cite{Jerrard-Moradifam-Ardian-2018}. In fact, if $\bar R\cap\bar O=\varnothing$, it will be a $\phi$-area minimizer and we can apply that lemma.
\end{proof}

In order to prove the strict maximum principle, we first prove a couple of intermediate results.  

\begin{lemma}\label{sub-to-sol}
Assume that $\phi$ satisfies  conditions C1-C2. 
Let $E$ be a $\phi$-sub (or  $\phi$-super) area minimizing in $\Omega$. There exists a $\phi$-area minimizing $G$ such that $G\cap\Omega^c=E\cap\Omega^c$ as well as $G\supseteq E$ (or $G\subseteq E$).
\end{lemma}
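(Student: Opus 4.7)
I will focus on the sub-area minimizing case; the super case is completely symmetric (replace union by intersection throughout). The plan is to construct $G$ as the minimizer of the constrained variational problem
\begin{equation*}
\inf\bigl\{P_\phi(F) : F\cap\Omega^c = E\cap\Omega^c \text{ a.e., } F\supseteq E\text{ a.e.}\bigr\},
\end{equation*}
and then upgrade this constrained minimality to full $\phi$-area minimality by using the sub-area hypothesis on $E$ together with the submodularity of $P_\phi$ from Lemma \ref{perimeter inequality}.

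\textbf{Step 1: Existence of the constrained minimizer.} The admissible class is nonempty (it contains $E$ itself), and every admissible $F$ differs from $E$ only inside the bounded set $\Omega$. Taking a minimizing sequence $\{F_k\}$, the $\phi$-perimeters $P_\phi(F_k)$ are bounded, so by condition C1 the standard Euclidean perimeters are bounded as well. By $BV$ compactness, a subsequence of $\chi_{F_k}$ converges in $L^1_{\text{loc}}(\bR^n)$ to some $\chi_G$. The pointwise a.e.\ inclusion $G\supseteq E$ and the boundary identification $G\cap\Omega^c = E\cap\Omega^c$ pass to the limit, and by the lower semicontinuity of the $\phi$-perimeter (a consequence of \eqref{convexity}) we have $P_\phi(G)\le\liminf_k P_\phi(F_k)$. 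Hence $G$ achieves the infimum.

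\textbf{Step 2: Promoting $G$ to a full $\phi$-area minimizer.} Let $F$ be any competitor with $F\cap\Omega^c = E\cap\Omega^c$ a.e. Set $F':= F\cup E$ and observe that
\begin{equation*}
F'\cap\Omega^c = (F\cap\Omega^c)\cup(E\cap\Omega^c) = E\cap\Omega^c, \qquad F'\supseteq E,
\end{equation*}
so $F'$ is admissible in the constrained problem and, by the minimality of $G$,
\begin{equation*}
P_\phi(G)\le P_\phi(F\cup E).
\end{equation*}
Next, Lemma \ref{perimeter inequality} applied to $E$ and $F$ (with $A=\bR^n$) gives
\begin{equation*}
P_\phi(E\cup F) + P_\phi(E\cap F) \le P_\phi(E) + P_\phi(F).
\end{equation*}
Since $(E\cap F)\cap\Omega^c = E\cap(F\cap\Omega^c) = E\cap\Omega^c$, the set $E\cap F$ is an admissible competitor for the sub-area minimizing property of $E$, giving $P_\phi(E)\le P_\phi(E\cap F)$. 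Substituting this in the submodularity inequality yields $P_\phi(E\cup F)\le P_\phi(F)$, and combining with the previous step produces $P_\phi(G)\le P_\phi(F)$. As $F$ was an arbitrary competitor agreeing with $E$ outside $\Omega$, $G$ is $\phi$-area minimizing, as required.

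\textbf{Main obstacle.} The nontrivial part is not the existence step, which is standard $BV$ compactness plus lower semicontinuity, but the slightly subtle algebraic manoeuvre in Step 2: one must have the presence of mind to test the sub-area minimality of $E$ against $E\cap F$ (checking that $E\cap F$ really has the correct trace outside $\Omega$), and then combine this one-sided inequality with the two-sided submodularity estimate of Lemma \ref{perimeter inequality} to control $P_\phi(E\cup F)$ by $P_\phi(F)$. Once that trick is identified, comparing $G$ to $E\cup F$ (rather than to $F$ directly, which is not admissible in the constrained problem) closes the argument. The super-area case is handled identically, replacing $\cup$ by $\cap$, $\supseteq$ by $\subseteq$, and using $F\cap E$ (resp.\ $E\cup F$) in the two places.
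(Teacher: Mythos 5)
Your proof is correct, and its engine is the same as the paper's: the submodularity estimate of Lemma \ref{perimeter inequality} combined with testing the sub-area minimality of $E$ against an intersection $E\cap F$, which yields $P_\phi(E\cup F)\le P_\phi(F)$ for any competitor $F$. The difference is organizational. The paper starts from an \emph{unconstrained} $\phi$-area minimizer $G$ with $G\cap\Omega^c=E\cap\Omega^c$ (whose existence it simply asserts), applies the trick once with $F=G$ to get $P_\phi(E\cup G)\le P_\phi(G)$, and concludes that $\tilde G=E\cup G$ is itself a minimizer containing $E$. You instead minimize over the \emph{constrained} class $\{F\supseteq E,\ F\cap\Omega^c=E\cap\Omega^c\}$ and then show the constraint is inactive by applying the same trick to every competitor $F$, via the admissible set $E\cup F$. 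The paper's version is shorter (one application of the inequality rather than a quantification over all competitors), while yours is more self-contained in that it spells out the direct-method existence step ($BV$ compactness plus the lower semicontinuity coming from \eqref{convexity}) that the paper leaves implicit; both arguments are complete and the symmetric treatment of the super-area case is handled the same way in each.
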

\begin{proof} First note that there is a $\phi$-area minimizing set $G$ in $\Omega$ such that $G\cap\Omega^c=E\cap\Omega^c$. 
Since $E$ is $\phi$-sub area minimizing, 
\[P_\phi(E) \leq P_\phi(E\cap G).\]
Thus it follows from Lemma \ref{perimeter inequality} that 
\[P_\phi(E\cup G) \leq P_\phi(G).\]
Hence $\tilde{G}=E \cup G$ is also $\phi$-area minimizing  and $E\subseteq \tilde{G}$. One can similarly show that every $\phi$-super area minimizing set contains a $\phi$-area minimizing set $G$ with the stated properties. 
\end{proof}

We will deduce the uniqueness of the solution and the comparison principle (Theorem \ref{Comparison principle})  from the following theorem. 

\begin{theorem}\label{comparison-sets}
Assume that $\phi$ satisfies conditions C1-C5.
Suppose that $E_1$ and $E_2$ are solutions of \eqref{variational problem} respectively for pairs of sets $(L_1,O_1)$ and $(L_2,O_2)$. Also, we have 
\begin{equation*}
L_1\Subset L_2\text{ and }O_1\Subset O_2.
\end{equation*}
Suppose $\Omega$ satisfies the barrier condition, or 
\begin{equation}\label{thm-comp-relation}
\partial E_1^{(1)}\setminus E_2^{(1)}\subset\Omega\text{ and }\partial E_2^{(1)}\cap \overline E_1^{(1)}\subset\Omega,
\end{equation}
then $E_1^{(1)}\Subset E_2^{(1)}$.
\end{theorem}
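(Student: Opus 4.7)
The plan is a standard comparison proof combining submodularity of the $\phi$-perimeter with a strong maximum principle for $\phi$-area minimizers. First I would introduce the replacement pair $\tilde E_1 := E_1 \cap E_2$ and $\tilde E_2 := E_1 \cup E_2$. The hypotheses $L_1 \subset L_2$ and $O_1 \subset O_2$ make $\tilde E_1$ admissible for the $(L_1, O_1)$ problem and $\tilde E_2$ admissible for the $(L_2, O_2)$ problem, since
\[
\tilde E_1 \cap \Omega^c = L_1 \cap \Omega^c,\ \tilde E_1 \supset O_1 \cap \Omega,\qquad \tilde E_2 \cap \Omega^c = L_2 \cap \Omega^c,\ \tilde E_2 \supset O_2 \cap \Omega.
\]
Lemma \ref{perimeter inequality} gives $P_\phi(\tilde E_1) + P_\phi(\tilde E_2) \le P_\phi(E_1) + P_\phi(E_2)$, and minimality of $E_1, E_2$ forces equality, so $\tilde E_1$ and $\tilde E_2$ are themselves minimizers. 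In particular $E_2 \subset \tilde E_2$ is a pair of ordered minimizers of the $(L_2, O_2)$ problem, which is the object to which the strong maximum principle will be applied.

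I would then argue by contradiction: suppose $\overline{E_1^{(1)}} \not\subset E_2^{(1)}$ and fix $x_0 \in \overline{E_1^{(1)}} \setminus E_2^{(1)}$. Outside $\bar\Omega$ one has $E_i = L_i$, so $L_1 \Subset L_2$ forces $x_0 \in \bar\Omega$. The boundary case $x_0 \in \partial\Omega$ is dispatched by hypothesis. Under \eqref{thm-comp-relation}, the inclusion $\partial E_2^{(1)} \cap \overline{E_1^{(1)}} \subset \Omega$ is an immediate exclusion. Under the barrier condition, Lemma \ref{boundary of superset} applied to $E_2$ forces $\partial E_2^{(1)} \cap \partial\Omega$ to lie in $\bar O_2$ in a neighborhood of $x_0$; combined with $O_1 \Subset O_2$ and $L_1 \Subset L_2$, this contradicts $x_0 \in \overline{E_1^{(1)}} \cap \partial\Omega$.

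The interior case $x_0 \in \Omega$ is the heart of the proof. Using $\overline{E_1^{(1)}} \subset \overline{\tilde E_2^{(1)}}$ together with $x_0 \notin E_2^{(1)}$, one produces a contact point between $\partial E_2^{(1)}$ and $\partial \tilde E_2^{(1)}$ inside $\Omega$. Theorem \ref{Hausdorff dimension of singularity} allows one to choose this contact point to be regular on both boundaries. Condition C3 provides uniform ellipticity of the $\phi$-minimal surface equation, so the classical strong maximum principle forces the two boundaries to coincide in a neighborhood. Proposition \ref{component-regular-points} then propagates this local coincidence along a connected regular component until it terminates at $\partial\Omega$ or at $\bar O_2$, and in either subcase the strict inclusions $L_1 \Subset L_2$ and $O_1 \Subset O_2$ yield a contradiction.

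The principal obstacle is this interior step: making the strong maximum principle rigorous for sets that are only super-area minimizing across the obstacle, and then propagating the local boundary coincidence along connected regular components so that it clashes cleanly with the strict separation of both the boundary data $L_i$ and the obstacle regions $O_i$. The barrier-condition half of the boundary step also requires a careful split between the portion of $\partial E_2^{(1)}$ that can be pushed inward and the portion pinned by the obstacle.
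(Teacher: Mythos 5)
Your Step 1 (the submodularity argument showing $E_1\cap E_2$ and $E_1\cup E_2$ are again minimizers) is exactly the paper's Step 1 and is fine. The first genuine gap is in your interior step, where you compare the wrong pair of sets. You look for a contact point between $\partial E_2^{(1)}$ and $\partial \tilde E_2^{(1)}$, where $\tilde E_2 = E_1\cup E_2$; but $E_2$ and $\tilde E_2$ are two \emph{ordered minimizers of the same problem} $(L_2,O_2)$, and contact or even local coincidence between them produces no contradiction: solutions of \eqref{variational problem} need not be unique, and whenever the theorem's conclusion $E_1\Subset E_2$ holds one has $\tilde E_2=E_2$, so these two boundaries coincide everywhere and every coincidence component runs out to $\partial\Omega$ --- precisely the configuration you propose to rule out. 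Hence your final claim that the propagated coincidence ``clashes'' with $L_1\Subset L_2$ and $O_1\Subset O_2$ has no content for this pair. The productive pair, which the paper uses, is $E_1$ (a minimizer of the \emph{inner} problem, hence genuinely $\phi$-area minimizing near any contact point off $\bar O_1$) against $F=E_1\cup E_2$ (a $\phi$-super area minimizer of the outer problem, reduced to a true minimizer via Lemma \ref{sub-to-sol}): the paper's Step 2 shows contact points of \emph{this} pair must avoid $\bar O_1\cup\partial\Omega$ (by the barrier condition or \eqref{thm-comp-relation}, and by $O_1\Subset O_2\subset F$), Steps 3--4 show the contact set is relatively open in the boundary, and Proposition \ref{component-regular-points} forces the resulting open-and-closed coincidence component to reach $\partial\Omega$ or the obstacle --- which is the contradiction. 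Even then one still needs the paper's Step 6 to pass from $E_1\Subset (E_1\cup E_2)^{(1)}$ to $E_1\Subset E_2$, a step your proposal omits entirely.

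The second genuine gap is the assertion that Theorem \ref{Hausdorff dimension of singularity} ``allows one to choose this contact point to be regular on both boundaries.'' For $n\geq4$ this is false as stated: that theorem only bounds the singular set in $\cH^{n-3}$-measure, while the contact set can be arbitrarily small --- a single point, say --- and that single point may well be singular. This difficulty is exactly why Simon's strict maximum principle \cite{Simon-1987} is a nontrivial theorem, and it is exactly where hypothesis C5 enters the paper's proof: since $\phi(x,\xi)=\phi(\xi)$ is translation invariant, one may translate $E_1$ by a small vector $\nu$ (Step 3, following Theorem 4.6 of \cite{Jerrard-Moradifam-Ardian-2018}) so that $\cH^{n-2}(\partial E_\nu\cap\partial F)>0$; a set of positive $\cH^{n-2}$-measure cannot be contained in the $\cH^{n-3}$-finite singular sets, so a regular contact point exists. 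Your proposal never invokes C5 at all, which is a reliable sign that this step is missing; without the translation trick your argument covers only $n\leq3$, where (as the paper's remark after the theorem notes) C5 is indeed unnecessary because all boundary points are regular.
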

\begin{proof}In view of Theorem \ref{Hausdorff dimension  of singularity}, $\textsl{int}(E_i^{(1)})$ differs from $E_i$ in a set of measure zero and we replace $E_i$ by $\textsl{int}(E_i^{(1)})$. We prove the result in a series of steps. 

{\it Step 1.} We will show that  $G=E_1\cap E_2$ and $F=E_1\cup E_2$ are solutions of \eqref{variational problem} for the pairs of sets $(L_1,O_1)$ and $(L_2,O_2)$, respectively. Since $E_1$ and $E_2$ are solutions of \eqref{variational problem},
$$P_\phi(E_1)\leq P_\phi(G),\quad\text{ and }P_\phi(E_2)\leq P_\phi(F).$$
 By Lemma \ref{perimeter inequality}, we have 
$$P_\phi(G)+ P_\phi(F)\leq P_\phi(E_1)+P_\phi(E_2),$$
and hence $P_\phi(G)=P_\phi(E_1)$ and $P_\phi(F)=P_\phi(E_2)$. Thus $G$ and $F$ are also solutions of the problem \eqref{variational problem}. 

{\it Step 2.} If  $x_0\in\partial E_1\cap\partial F$, then there is a neighborhood of $x_0$ in which $E_1$ is a $\phi$-area minimizing and $F$ is $\phi$-super area minimizing. This  immediately follows from the observation that  $x_0\notin \bar O_1\cup \partial\Omega$. Notice that $x_0\in \partial \Omega \cap \partial E_1\cap\partial F$ violates the barrier condition. 
 
{\it Step 3:} In this step we show that if $\partial E_1\cap\partial F\neq\varnothing$, then \hbox{$\cH^{n-2}(\partial E_\nu\cap\partial F)>0$}, where $E_\nu=E_1+\nu$ for some small vector $\nu \in \mathbb{R}^n$. In order to see this, define 
$$\Omega_\delta=\{x\in\Omega: \emph{dist}(x,\partial\Omega)>\delta\},$$
and choose $\delta>0$ such that 
\begin{equation*}
\emph{dist}(\partial E_1\cap\Omega_\delta^c,\partial F\cap\Omega_\delta^c)>\delta,\ \ \emph{dist}(\bar O_1,O_2^c)>\delta.
\end{equation*}
Let $x_0\in \partial E_1\cap\partial F$ and choose $y\in B(x_0, \delta )\cap F^c$.  Set  $\nu:=y-x_0$ and $E_\nu=E_1+\nu$. By (C5), $\phi(x,\xi)=\phi(\xi)$ and hence $E_\nu$ is also a solution of \eqref{variational problem} for the pair of sets $(L_1+\nu,O_1+\nu)$ in $\Omega_\delta$. Then it follows from an argument similar to the one used in the proof of Theorem 4.6 in \cite{Jerrard-Moradifam-Ardian-2018} that  
\begin{equation}\label{intersectionIs Large}
\cH^{n-2}(\partial E_\nu\cap\partial F)>0.
\end{equation}
As in step 1, replace $F$ by $F\cup E_\nu$. 

{\it Step 4:} In view of Theorem \ref{Hausdorff dimension  of singularity} and \eqref{intersectionIs Large}, there exists a regular point $x_1$ of $\partial E_\nu$ such that $x_1\in \partial E_\nu\cap\partial F$and $x_1$ is a Lebesgue point of $\partial E_\nu\cap\partial F$ with respect to the measure $\cH^{n-2}$. In this step, we will show that  there is a neighborhood of $x_1$ in $\partial E_\nu$ that is a subset of $\partial E_\nu\cap\partial F$. 
Consider a ball  $B=B_r(x_1)$ such that $E_\nu\cap B$ is a $C^2$ hypersurface, and towards a contradiction assume that $E_\nu\cap\partial B\neq F\cap \partial B$. According to Lemma \ref{sub-to-sol}, there is a $\phi$-area minimizing $G$, such that $G\subseteq F$ and $G\cap B^c=F\cap B^c$. 
Notice that $\cH^{n-2}(\partial E_\nu\cap\partial G\cap B)>0$, since either $\partial G$ intersects $\partial E_\nu$ transversally or contains $\partial E_\nu\cap\partial F$. 

Now repeat Step 1 to find two $\phi$-area minimizing $E_\nu\cup G$ and $E_\nu\cap G$, which intersects in a set with positive $\cH^{n-2}$-measure. 
Then by Theorem \ref{Hausdorff dimension  of singularity} there is a point $x_*$ such that $E_\nu\cup G$ and $E_\nu\cap G$ are regular at that. By Lemma 4.4 in \cite{Jerrard-Moradifam-Ardian-2018} we conclude that $\partial(E_\nu\cup G)=\partial(E_\nu\cap G)$ in a neighborhood of $x_*$. This yields that $E_\nu=G$ in an open subset of $\partial E_\nu\cap B$. The boundary of this set has  positive $\cH^{n-2}$-measure, and we can repeat the above argument to prove that $E_\nu\cap B=G\cap B$ (see the proof of Theorem 4.6 in \cite{Jerrard-Moradifam-Ardian-2018} for more details). Therefore, $E_\nu\cap\partial  B= G\cap\partial B=F\cap\partial B$. This is a contradiction, and hence $\partial E_\nu$ is a subset of $\partial E_{\nu} \cap \partial F $ in a neighborhood of $x_1$.

{\it Step 5:} In this step we show that  $E_1\Subset (E_1\cup E_2)^{(1)}$.  Towards a contradiction suppose this is not the case. Then by steps 3 and 4, we know that each connected component of $ \partial E_\nu\cap\partial F$ is an open subset of  $\partial E_\nu$ for some $\nu \in \mathbb{R}^n$. It follows from Proposition \ref{component-regular-points} that $ \partial E_\nu\cap\partial F$ intersects the boundary  $\partial \Omega$ or the obstacle $O_1+\nu$, which contradicts the assumptions of the theorem, and hence  $E_1\Subset (E_1\cup E_2)^{(1)}$. 

{\it Step 6:} Finally we prove that $E_1\Subset E_2$. 
First we will show that $E_1\subset E_2$, toward a contradiction assume that $E_1\setminus E_2$ has nonempty interior. 
Since $E_1\Subset F=(E_1\cup E_2)^{(1)}$, then we have $\partial F\subseteq\partial E_2$.
On the other hand, from topological point of view 
\begin{equation}\label{boundary-inclusion}
\partial E_2\subseteq\partial F\cup\partial(E_1\setminus E_2).
\end{equation}
If there exists some point $x_0\in\partial(E_1\setminus E_2)\setminus\partial E_2$, then we  must have
$$x_0\in\textsl{int}(E_2^c)\cap \partial E_1\subset\textsl{int}(E_2^c)\cap F\subseteq E_1,$$
which contradicts $x_0\in\partial E_1$ ($E_1$ is open). 
It yields that   $\partial(E_1\setminus E_2)\subset\partial E_2$. 
Therefore, $\partial E_2=\partial F\cup\partial(E_1\setminus E_2)$ by \eqref{boundary-inclusion}, which means that the perimeter of $F$ is less than the perimeter of $E_2$ unless $\cH^{n-1} (\partial(E_1\setminus E_2))=0$. This contradicts the assumption $\textsl{int}(E_1\setminus E_2)\neq\varnothing$. 
Hence  $E_1 \cup E_2$, and consequently $E_1 \Subset F=E_2$ by the the conclusion in Step 5.
\end{proof}

\begin{remark}
When $n=2$ or $3$, the statement in Theorem \ref{comparison-sets} holds without condition (C5). 
Because all $\phi$-area minimizing sets are regular even $\phi$ depends on variable $x$ (Theorem \ref{Hausdorff dimension  of singularity}).
Hence we does not need steps 3, and in step 4 we can choose $\nu=0$. A similar argument implies $E_1 \Subset E_2$. 
\end{remark}

\begin{proof}[Proof of Theorem \ref{Comparison principle}]
The proof is inspired by Theorem 1.4 from \cite{Jerrard-Moradifam-Ardian-2018}. Suppose that \eqref{boundary-monoton} is not true. Since
$$\{x\in\Omega: u_1(x)>u_2(x)\}=\bigcup_{(\lambda_1,\lambda_2)\in\bQ\times\bQ}\{x\in\Omega:u_1(x)>\lambda_1>\lambda_2\geq u_2(x)\},$$
there must be some rational numbers $\lambda_1>\lambda_2$ such that 
$$\cH^n(\{x\in\Omega:u_1(x)>\lambda_1>\lambda_2\geq u_2(x)\})>0.$$
Now define 
$$E_i:=\{x\in\bR^n:u_i(x)>\lambda_i\},$$
then we have $\cH^n(E_1\setminus E_2)>0$. 
On the other hand, we can easily verify that the conditions of Theorem \ref{comparison-sets} are satisfies, and hence $E_1^{(1)}\Subset E_2^{(1)}$. 
\end{proof}

The idea in the proof of Theorem \ref{comparison-sets} allow us to prove a strict maximum principle for $\phi$-sub and super area minimizing sets. This result generalizes 
the result in \cite{Simon-1987} and \cite{Zuniga}.

\begin{theorem}[Strict maximum principle]\label{maximum-principle}
Assume that $\phi$ satisfies the conditions C1-C5.
Let $E \subset \mathbb{R}^n$ be $\phi$-sub area minimizing and $F \subset \mathbb{R}^n$ be $\phi$-super area minimizing relative to an open set $\Omega$, and
\[E \setminus \Omega \Subset F \setminus \Omega.\]
Suppose $\Omega$ satisfies the barrier condition, 
then
\[E^{(1)} \Subset F^{(1)}.\]
\end{theorem}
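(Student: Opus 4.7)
The plan is to reduce the theorem to Theorem \ref{comparison-sets} by sandwiching $E$ and $F$ between two $\phi$-area minimizing sets with empty obstacle. First, I would apply Lemma \ref{sub-to-sol} to the sub-minimizer $E$ to obtain a $\phi$-area minimizing set $\tilde E \supseteq E$ with $\tilde E \cap \Omega^c = E \cap \Omega^c$, and apply Lemma \ref{sub-to-sol} to the super-minimizer $F$ to obtain a $\phi$-area minimizing set $\tilde F \subseteq F$ with $\tilde F \cap \Omega^c = F \cap \Omega^c$. In particular, $\tilde E$ and $\tilde F$ are solutions of \eqref{variational problem} for the data $(L,O) = (\tilde E, \varnothing)$ and $(\tilde F, \varnothing)$ respectively.

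Using the hypothesis $E \setminus \Omega \Subset F \setminus \Omega$ together with the equalities $\tilde E \cap \Omega^c = E \cap \Omega^c$ and $\tilde F \cap \Omega^c = F \cap \Omega^c$, the compact containment condition $L_1 \Subset L_2$ of Theorem \ref{comparison-sets} is satisfied by $L_1 := \tilde E$ and $L_2 := \tilde F$, while the obstacle condition $O_1 \Subset O_2$ is vacuous as both obstacles are empty. Since $\Omega$ satisfies the barrier condition by hypothesis, Theorem \ref{comparison-sets} applies and yields $\tilde E^{(1)} \Subset \tilde F^{(1)}$.

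Finally, from the set-theoretic inclusions $E \subseteq \tilde E$ and $\tilde F \subseteq F$ (which hold literally, not merely up to null sets, by the construction in the proof of Lemma \ref{sub-to-sol}), I would deduce $E^{(1)} \subseteq \tilde E^{(1)}$ and $\tilde F^{(1)} \subseteq F^{(1)}$, using that the density of a larger set at any point dominates that of the smaller one. Chaining these inclusions with the compact containment $\tilde E^{(1)} \Subset \tilde F^{(1)}$ gives $\overline{E^{(1)}} \subseteq \overline{\tilde E^{(1)}} \subset \mathrm{int}(\tilde F^{(1)}) \subseteq \mathrm{int}(F^{(1)})$, hence $E^{(1)} \Subset F^{(1)}$.

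Beyond the bookkeeping of this sandwich argument, the main point to check is that Theorem \ref{comparison-sets} genuinely applies with empty obstacles. Inspecting its proof, the obstacle $O_i$ only enters through the statement $x_0 \notin \bar O_1 \cup \partial \Omega$ in Step 2 and through the conclusion $\bar R \cap \partial\Omega \neq \varnothing$ or $\bar R \cap \bar O \neq \varnothing$ of Proposition \ref{component-regular-points} in Step 5; with $O_i = \varnothing$ both reductions become strictly easier, since the barrier condition on $\Omega$ already rules out $x_0 \in \partial \Omega$ and forces the regular components to reach $\partial \Omega$. Thus the reduction goes through without further modification.
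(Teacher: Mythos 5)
Your first step coincides exactly with the paper's: both apply Lemma \ref{sub-to-sol} to produce $\phi$-area minimizing sets $\tilde E \supseteq E$ and $\tilde F \subseteq F$ that agree with $E$, $F$ outside $\Omega$. After that you diverge. The paper invokes Theorem 4.6 of \cite{Jerrard-Moradifam-Ardian-2018} to get the non-strict inclusion $\tilde E^{(1)} \subset \tilde F^{(1)}$ (strict only when $n \leq 3$) and then handles $n \geq 4$ by an explicit translation argument based on (C5); you instead cite the paper's own Theorem \ref{comparison-sets} with empty obstacles, whose conclusion is already strict because its Steps 3--5 contain precisely that translation argument. This packaging is legitimate --- there is no circularity, since the proof of Theorem \ref{comparison-sets} nowhere uses the strict maximum principle --- and it is arguably tidier, as the dimension-dependent work is done once inside Theorem \ref{comparison-sets} rather than repeated.

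There is, however, a genuine flaw in your verification of the hypotheses of Theorem \ref{comparison-sets}: you set $L_1 := \tilde E$, $L_2 := \tilde F$ and claim $L_1 \Subset L_2$ follows from $E\setminus\Omega \Subset F\setminus\Omega$. It does not. The hypothesis controls the two sets only outside $\Omega$; inside $\Omega$, the containment $\tilde E \Subset \tilde F$ is essentially the conclusion you are trying to prove, so as written this step assumes what is to be shown. The repair is easy and rests on the observation you yourself make, namely that the constraint in \eqref{variational problem} involves only $L\cap\Omega^c$: choose instead $L_1 := E\setminus\Omega$ and $L_2 := (F\setminus\Omega)\cup\Omega$. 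Then $L_1\cap\Omega^c = \tilde E\cap\Omega^c$ and $L_2\cap\Omega^c = \tilde F\cap\Omega^c$, so $\tilde E$ and $\tilde F$ are solutions of \eqref{variational problem} for $(L_1,\varnothing)$ and $(L_2,\varnothing)$, and now $\overline{L_1} = \overline{E\setminus\Omega} \subset \mathrm{int}\,(F\setminus\Omega) \subset \mathrm{int}\,(L_2)$, i.e. $L_1 \Subset L_2$, directly from the hypothesis of the theorem. With this substitution your application of Theorem \ref{comparison-sets} is sound, and the final chaining $E^{(1)} \subseteq \tilde E^{(1)} \Subset \tilde F^{(1)} \subseteq F^{(1)}$ (which is correct: density points of a set are density points of any superset) completes the proof.
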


\begin{proof}
By Lemma \ref{sub-to-sol}, there exists $\phi$-area minimizing sets $\tilde{E}$ and $\tilde{F}$ such that $\tilde E\supseteq E$ and $\tilde F\subseteq F$. Since $\Omega$ satisfies the barrier condition, 
\[ \partial \tilde{E}^{(1)} \setminus  \tilde{F}^{(1)} \subset \Omega \ \ \hbox{and}\ \ \partial \tilde{F}^{(1)}\cap \overline{\tilde{E}}^{(1)} \subset \Omega.  \]
By Theorem 4.6 in \cite{Jerrard-Moradifam-Ardian-2018} we have $\tilde E^{(1)} \subset \tilde F^{(1)}$. Moreover $\tilde E^{(1)} \Subset \tilde F^{(1)}$ if $n \leq 3$. 
In order to prove the theorem for $n\geq 4$, note that since  $E\cap\Omega^c\Subset F\cap\Omega^c$, there is a $\delta>0$ such that 
\begin{equation*}
\emph{dist}(\partial E\cap\Omega_\delta^c,\partial F\cap\Omega_\delta^c)>\delta.
\end{equation*}
Let $x_0\in \partial \tilde E^{(1)}\cap\partial \tilde F^{(1)}$ and choose $y\in B(x_0, \delta )\cap \tilde F^c$.  Set  $\nu:=y-x_0$ and $E_\nu=\tilde E+\nu$. Since we have assumed (C5), $\phi(x,\xi)=\phi(\xi)$, and hence $E_\nu$ is also a $\phi$-area minimizer in $\Omega_\delta$. Observe that 
\[ \partial E_{\nu}^{(1)} \setminus \tilde F^{(1)} \subset \Omega_\delta \ \ \hbox{and}\ \ \partial \tilde F^{(1)}\cap \overline{E}_{\nu}^{(1)} \subset \Omega_\delta.  \]
 It again follows from Theorem 4.6 in \cite{Jerrard-Moradifam-Ardian-2018} that $E_{\nu}^{(1)} \subset \tilde F^{(1)}$ which is a contradiction. Thus $\partial \tilde E^{(1)}\cap\partial \tilde F^{(1)}=\varnothing$, and the proof is complete. 
\end{proof}

We shall need the following proposition to prove regularity results for solutions of  \eqref{obstacle-problem-BC}. 

\begin{proposition}\label{minimizer of distance}
Under the assumption of Theorem \ref{comparison-sets},
if \hbox{$d=\textsl{dist}(\partial E_1\cap\Omega,\partial E_2\cap\Omega)$} and this distance is taken in points $|x-y|=d$, such that $x\in\partial E_1\cap\Omega$ and $y\in \partial E_2\cap\Omega$, then  either $x\in \bar O_1\cup\partial\Omega$ or $y\in\partial\Omega$.
\end{proposition}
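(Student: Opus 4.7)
The approach is a proof by contradiction that exploits the translation invariance granted by condition (C5). Suppose toward a contradiction that $x \notin \bar O_1 \cup \partial \Omega$ and $y \notin \partial \Omega$. Then both points sit in the open set $\Omega \setminus \bar O_1$ (for $x$) and in $\Omega$ (for $y$), so one can fix $r > 0$ small enough that $B_r(x) \Subset \Omega \setminus \bar O_1$ and $B_r(y) \Subset \Omega$. By Theorem \ref{minimal-superset} the obstacle is inactive on $B_r(x)$ so $E_1$ is $\phi$-area minimizing there, and likewise $E_2$ is $\phi$-area minimizing in $B_r(y)$.

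Set $\nu := y - x$, so $|\nu| = d$, and consider the translation $E_\nu := E_1 + \nu$. By (C5) the integrand $\phi$ does not depend on $x$, so $\phi$-area minimality is translation invariant, and $E_\nu$ is $\phi$-area minimizing in $B_r(y)$. Moreover $y = x + \nu$ lies in $\partial E_\nu \cap \partial E_2$. I next argue a local one-sided containment $E_\nu \cap B_{r'}(y) \subseteq \bar E_2$ for some $r' \in (0,r]$: for any $z \in \partial E_\nu \cap B_{r'}(y)$, its preimage $z' := z - \nu \in \partial E_1 \cap B_{r'}(x)$ satisfies $\mathrm{dist}(z', \partial E_2 \cap \Omega) \ge d$ by the minimum-distance definition of $d$; shrinking $r'$ so that closest points of $\partial E_2$ to such $z'$ stay inside $\Omega$ (possible since $x$ is a positive distance from $\partial \Omega$ and $y \in \Omega$) gives $\mathrm{dist}(z', \partial E_2) \ge d$. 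Since $z' \in E_2$ by $E_1 \Subset E_2$, the signed distance $\rho_2$ to $\partial E_2$ satisfies $\rho_2(z') \le -d$; being $1$-Lipschitz, $\rho_2(z) \le \rho_2(z') + |\nu| \le 0$, so $z \in \bar E_2$.

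With $E_\nu$ trapped on one side of $E_2$ near $y$ and meeting at $y$, the argument in Step 4 of the proof of Theorem \ref{comparison-sets} applies: Theorem \ref{Hausdorff dimension  of singularity} produces a common regular boundary point, and Lemma 4.4 of \cite{Jerrard-Moradifam-Ardian-2018} then forces $\partial E_\nu = \partial E_2$ in a neighborhood of $y$. This local coincidence propagates: the set of $z_1 \in \partial E_1 \cap (\Omega \setminus \bar O_1)$ with $z_1 + \nu \in \partial E_2 \cap \Omega$ is both open and closed in the connected component of regular points of $\partial E_1$ through $x$, so by Proposition \ref{component-regular-points} its closure must meet $\partial \Omega$ or $\bar O_1$. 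Translating the corresponding point back along $\nu$ and combining with the strict containments $L_1 \Subset L_2$ and $O_1 \Subset O_2$ (which force $\partial E_2$ to remain a definite distance from $\bar O_1 + \nu$ and from any translated boundary configuration) produces the desired contradiction.

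The main obstacle I anticipate is the one-sided containment step, which requires careful choice of $r'$ so that the nearest points of $\partial E_2$ to each $z'$ near $x$ lie inside $\Omega$; once this is secured, the local strict maximum principle and its propagation to a boundary/obstacle contradiction follow the template already developed in the proofs of Theorems \ref{comparison-sets} and \ref{maximum-principle}.
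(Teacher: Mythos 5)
Your opening moves coincide with the paper's: argue by contradiction and translate $E_1$ by $\nu=y-x$, using (C5) to preserve minimality. But from there the paper takes a much shorter route that avoids precisely the step where your argument breaks. Rather than re-running the local regularity machinery at the contact point, the paper observes that $\tilde E_1=E_1+\nu$ solves \eqref{variational problem} for the translated data $(\tilde L_1,\tilde O_1)=(L_1+\nu,O_1+\nu)$ in $\Omega+\nu$, that $\tilde O_1\cap\partial E_2=\varnothing$, and then \emph{enlarges the second obstacle}: choosing $\epsilon>0$ with $\tilde O_1+B_\epsilon\Subset E_2$, it sets $\tilde O_2:=O_2\cup(\tilde O_1+B_\epsilon)$, notes that $E_2$ is still a solution for $(L_2,\tilde O_2)$ (the added constraint is already satisfied by $E_2$), and applies Theorem \ref{comparison-sets} as a black box to the pairs $(\tilde L_1,\tilde O_1)$ and $(L_2,\tilde O_2)$ in $\Omega\cap(\Omega+\nu)$. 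The conclusion $\tilde E_1^{(1)}\Subset E_2^{(1)}$ contradicts $y\in\partial\tilde E_1^{(1)}\cap\partial E_2^{(1)}$, and the proof ends there.

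The genuine gap in your proposal is the sentence claiming that ``Theorem \ref{Hausdorff dimension  of singularity} produces a common regular boundary point.'' After your translation you control only a \emph{single} contact point $y$, and for $n\geq4$ Theorem \ref{Hausdorff dimension  of singularity} merely bounds the singular set in $\cH^{n-3}$ measure; nothing prevents $y$ itself from being a singular point of $\partial E_\nu$ or of $\partial E_2$, and Lemma 4.4 of \cite{Jerrard-Moradifam-Ardian-2018} requires a point that is regular for \emph{both} hypersurfaces. This is exactly why Step 3 exists in the proof of Theorem \ref{comparison-sets}: there one translates so as to make the two sets \emph{cross}, which forces $\cH^{n-2}(\partial E_\nu\cap\partial F)>0$, and only then does the dimension bound on the singular set guarantee a common regular point. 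Your construction produces one-sided (tangential) contact, so that mechanism is unavailable; repairing it would require a strict maximum principle valid at singular points, in the spirit of \cite{Simon-1987}, which the paper never establishes for anisotropic $\phi$ --- indeed the whole design of Steps 3--5 of Theorem \ref{comparison-sets}, and of the obstacle-enlargement trick here, is built to avoid needing it. A secondary weakness: your endgame is not a contradiction as stated. If the coincidence set propagates to $\bar O_1+\nu$ rather than to $\partial\Omega$, this merely exhibits \emph{another} distance-realizing pair that touches the obstacle, which is perfectly compatible with your standing assumption about the particular pair $(x,y)$; the paper's enlargement $\tilde O_2\Supset\tilde O_1$ is what eliminates this case cleanly. (Your scheme does go through essentially as written when $n\leq3$, where every boundary point is regular and no extra translation is needed.)
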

\begin{proof}
Consider  the points $x$ and $y$ such that violate the statement.  
Let $\nu=y-x$,  the translation $\tilde E_1=\nu+E_1$ remains a solution of \eqref{variational problem}  for the pair of sets $(\nu+L_1,\nu+O_1)=:(\tilde L_1,\tilde O_1)$ in $\tilde \Omega:=\nu+\Omega$. 
According to our assumption $\tilde L_1\Subset L_2$ and $\tilde O_1\cap\partial E_2=\varnothing$.
Choose $\epsilon>0$ such that $\tilde O_1+B_\epsilon\Subset E_2$, and define $\tilde O_2:=O_2\cup(\tilde O_1+B_\epsilon)$ which satisfies $\tilde O_2\Supset \tilde O_1$. Then $E_2$ is also a solution for $(L_2,\tilde O_2)$. 
On the other hand, $y\in \partial \tilde E_1\cap\partial E_2$ and this contradicts Theorem \ref{comparison-sets}, for $\tilde E_1$ and $E_2$ in the domain $\Omega\cap\tilde\Omega$.
\end{proof}

\section{Regularity of solutions}

First of all we shall notice that the continuity of the solution of   \eqref{obstacle-problem-BC} is a straightforward result of the geometric comparison principle, Theorem \ref{comparison-sets}. The proof is similar to Theorem 1.3 in \cite{Jerrard-Moradifam-Ardian-2018}, then we  just give the statement without proof in the following proposition. 

\begin{proposition}[Continuity]\label{Continuity}
Let $\Omega\subset\bR^n$ be a bounded Lipschitz domain with connected boundary, and assume $\phi:\Omega\times\bR^n\longrightarrow\bR$ satisfies C1-C5. 
If  $u$ is a solution of  \eqref{obstacle-problem-BC}, then $u$ is continuous.
\end{proposition}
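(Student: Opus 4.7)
The plan is to prove continuity by contradiction, exploiting the level-set decomposition of $u$ together with the strict geometric comparison between level sets established in Theorem \ref{comparison-sets}. This mirrors the standard strategy for least gradient problems, with the added care that level sets here solve an obstacle perimeter problem rather than a free one.

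Concretely, I would proceed as follows. First, by Theorem \ref{minimal-superset}, for every $t\in\bR$ the super-level set $E_t:=\{u>t\}$ is a minimizer of \eqref{variational problem} with data $(L_t,O_t)$, where $L_t=\{f>t\}$ and $O_t=\{\psi>t\}$. Assuming (as is implicit for the continuity statement, and is also the hypothesis used in Theorems \ref{Holder regularity} and \ref{Lipschitz regularity}) that $f\in C(\partial\Omega)$ and $\psi\in C(\Omega)$, for any rationals $t_1<t_2$ one has $\overline{L_{t_2}}\subset\{f\geq t_2\}\subset\{f>t_1\}=L_{t_1}$, and similarly for $O$. Hence $L_{t_2}\Subset L_{t_1}$ and $O_{t_2}\Subset O_{t_1}$, and Theorem \ref{comparison-sets} yields $E_{t_2}^{(1)}\Subset E_{t_1}^{(1)}$. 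In particular, the topological boundaries $\partial E_{t_1}^{(1)}$ and $\partial E_{t_2}^{(1)}$ are disjoint (and, restricted to the compact set $\overline\Omega$, separated by a positive distance).

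Next, suppose toward a contradiction that $u$ is discontinuous at some $x_0\in\overline\Omega$ (interpreting $u(x_0)=f(x_0)$ if $x_0\in\partial\Omega$). Setting $\underline u(x_0)=\liminf_{x\to x_0}u(x)$ and $\overline u(x_0)=\limsup_{x\to x_0}u(x)$, discontinuity gives $\underline u(x_0)<\overline u(x_0)$, so we can pick rationals $t_1<t_2$ in $(\underline u(x_0),\overline u(x_0))$. For each such $t_i$, every neighborhood of $x_0$ contains points where $u>t_i$ and points where $u\leq t_i$; since the level sets of a $\phi$-minimizer can be taken to have their topological boundary equal to $\partial E_{t_i}^{(1)}$ (using Theorem \ref{Hausdorff dimension  of singularity} to choose the good representative $\operatorname{int} E_{t_i}^{(1)}$), we obtain $x_0\in\partial E_{t_1}^{(1)}\cap\partial E_{t_2}^{(1)}$, contradicting the separation above.

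The main obstacle is the boundary case $x_0\in\partial\Omega$, because there one cannot directly apply Theorem \ref{comparison-sets} to level sets that may touch $\partial\Omega$ at $x_0$: one needs the hypothesis \eqref{thm-comp-relation} or, failing that, the barrier condition to push the two level surfaces apart near the boundary. The argument at the boundary relies on the fact that, by the barrier condition, a $\phi$-minimal surface cannot cling to $\partial\Omega$, and by Proposition \ref{minimizer of distance} two level surfaces realizing minimal distance between them cannot both touch $\partial\Omega$ at an interior boundary point. A secondary, more routine difficulty is the careful passage between pointwise oscillation of $u$ and measure-theoretic boundary points of $E_t^{(1)}$, which is handled by choosing the good Lebesgue representative of each level set so that the topological boundary coincides with $\partial E_t^{(1)}$ up to an $\cH^{n-1}$-null set.
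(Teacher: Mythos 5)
Your overall strategy coincides with the paper's intended proof: the paper gives no details, saying only that continuity is a ``straightforward result of the geometric comparison principle, Theorem \ref{comparison-sets}'', with a proof similar to Theorem 1.3 of \cite{Jerrard-Moradifam-Ardian-2018}. Your reduction --- level sets of $u$ solve \eqref{variational problem} by Theorem \ref{minimal-superset} (via Proposition \ref{relationProp}), strict separation $E_{t_2}^{(1)}\Subset E_{t_1}^{(1)}$ for rationals $t_1<t_2$ by Theorem \ref{comparison-sets}, then a contradiction at a point of oscillation --- is exactly that argument, and the interior part of your sketch (including the passage from essential oscillation to $x_0\in\partial E_{t_1}^{(1)}\cap\partial E_{t_2}^{(1)}$, and the observation that continuity of $f$ and $\psi$ is implicitly needed) is sound.

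The genuine gap is in how you discharge the hypotheses of Theorem \ref{comparison-sets}. Besides $L_{t_2}\Subset L_{t_1}$ and $O_{t_2}\Subset O_{t_1}$, that theorem requires \emph{either} the barrier condition \emph{or} the inclusions \eqref{thm-comp-relation}; Proposition \ref{Continuity} assumes neither --- its hypotheses are only that $\Omega$ is a bounded Lipschitz domain with connected boundary and that C1--C5 hold. So ``relying on the barrier condition'' proves a strictly weaker statement than the proposition, and the defect is not confined to boundary points $x_0\in\partial\Omega$: Theorem \ref{comparison-sets} cannot be invoked at all, even to separate level sets near an interior point of discontinuity, until one of its two alternative hypotheses is verified. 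Your appeal to Proposition \ref{minimizer of distance} cannot repair this, since that proposition is itself stated ``under the assumption of Theorem \ref{comparison-sets}'', so using it to establish those assumptions is circular. The correct route --- the one the paper itself uses implicitly in the proof of Theorem \ref{Comparison principle}, which likewise assumes no barrier condition --- is to verify \eqref{thm-comp-relation} directly from the fact that $u$ solves the Dirichlet-type problem \eqref{obstacle-problem-BC}, not merely the relaxed problem \eqref{obstacle-problem0}: because the trace of $u$ equals $f$ exactly, the extension of $u$ by a continuous extension of $f$ has no jump across $\partial\Omega$, and continuity of $f$ combined with density estimates for the (constrained) perimeter minimizers $E_t$ yields that $\partial E_t^{(1)}$ can meet $\partial\Omega$ only where $f=t$; this is the key boundary lemma behind Theorem 1.3 of \cite{Jerrard-Moradifam-Ardian-2018}. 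Since $t_1<t_2$, the sets $\{f=t_1\}$ and $\{f=t_2\}$ are disjoint, and from this one checks both inclusions in \eqref{thm-comp-relation} for $E_1=E_{t_2}$, $E_2=E_{t_1}$. Only then does Theorem \ref{comparison-sets} apply, after which your contradiction argument goes through at interior and boundary points alike.
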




In order to study the Holder regularity, we need the following property for the norm $\phi(x,\xi)$.
\begin{lemma}\label{norm-property2}
If $\phi:\Omega\times\bR^n\longrightarrow\bR$ satisfies C1-C4, then for every $p$ and $q$ we have
\begin{equation*}
p\cdot\phi_\xi(x,q)\leq\phi(x,p).
\end{equation*}
\end{lemma}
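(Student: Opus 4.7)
The plan is to derive this as an immediate consequence of the convexity of $\phi$ in the $\xi$ variable combined with the Euler-type homogeneity identity already recorded in \eqref{norm-property}.

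First I would reduce to the case $q\neq 0$, since $\phi_\xi(x,q)$ is only defined away from the origin by (C3). For such $q$, convexity of $\xi\mapsto\phi(x,\xi)$ together with differentiability at $q$ gives the subgradient (tangent-plane) inequality
\begin{equation*}
\phi(x,p)\;\geq\;\phi(x,q)+\phi_\xi(x,q)\cdot(p-q)\;=\;\phi(x,q)+p\cdot\phi_\xi(x,q)-q\cdot\phi_\xi(x,q),
\end{equation*}
valid for every $p\in\bR^n$.

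Next I would invoke the Euler identity $q\cdot\phi_\xi(x,q)=\phi(x,q)$ from \eqref{norm-property}, which is justified by the $1$-homogeneity of $\phi(x,\cdot)$. Substituting this into the previous inequality causes the two $\phi(x,q)$ terms to cancel, yielding exactly $\phi(x,p)\geq p\cdot\phi_\xi(x,q)$.

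There is essentially no obstacle here; the only minor point to mention is that the result is vacuous (or extended by continuity) at $q=0$, but since (C1) gives $\phi(x,\cdot)$ a genuine norm structure and (C3)-(C4) give smoothness away from $\{\xi=0\}$, the inequality at any nonzero $q$ is all that is ever used in the sequel. The same argument in fact shows the more general statement that $\phi_\xi(x,q)$ is a subgradient of the convex function $\phi(x,\cdot)$ at $q$, of which the lemma is a reformulation using homogeneity.
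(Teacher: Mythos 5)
Your proof is correct and takes essentially the same route as the paper: the paper's argument normalizes $\phi(x,p)=\phi(x,q)=1$ and derives the first-order inequality $\phi_\xi(x,q)\cdot(p-q)\leq 0$ by hand from the convexity of $t\mapsto\phi(x,tp+(1-t)q)$, then concludes with the same Euler identity $q\cdot\phi_\xi(x,q)=\phi(x,q)$ from \eqref{norm-property}. Your direct appeal to the subgradient (tangent-plane) inequality simply packages that one-dimensional computation as a standard fact about convex differentiable functions, so the two proofs coincide in substance.
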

\begin{proof}
By the norm property \eqref{norm-property}, we can assume $\phi(x,p)=\phi(x,q)=1$.
Let $f(t):=\phi(x,tp+(1-t)q)$, we have $f(0)=1$ and for $0<t<1$
$$f(t)\leq t\phi(x,p)+(1-t)\phi(x,q)=1.$$
Alos, for $t<0$ we have
$$f(t)\geq \phi(x,(1-t)q)-\phi(x,tp)=(1-t)-|t|=1.$$
Thus $f'(0)\leq0$ which yields 
$$\phi_\xi(x,q)\cdot(p-q)\leq0.$$
Using the norm property \eqref{norm-property}, $q\cdot\phi_\xi(x,q)=\phi(x,q)=1$ to deduce the lemma.
\end{proof}

Now we are going to construct barriers and prove a comparison principle for such barriers. The results and the proofs in this section are inspired by \cite{ZZ}. 
\begin{lemma}\label{barriers}
Let $\Omega\subset\bR^n$ be a bounded Lipschitz domain. Suppose $u\in C^0(\bar\Omega)\cap BV(\Omega)$ is a solution of \eqref{obstacle-problem-BC} and $v\in C^2(\Omega)\cap C^0(\bar\Omega)$ satisfies
\begin{itemize}
\item[(i)] $|\nabla v|>0$ in $\Omega$,
\item[(ii)] $u\geq v$ on $\partial\Omega$,
\item[(iii)] $\cL v>0$ in $\Omega$,
\end{itemize}
where $\cL v=\sum_{i=1}^n\partial_{x_i}\phi_{\xi_i}(x,Dv(x))$. Then $u\geq v$ in $\Omega$. Similarly, if inequalities $(ii)$ and $(iii)$ are reserved, then $u\leq v$ in $\Omega$.
\end{lemma}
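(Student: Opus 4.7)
I argue by contradiction using the vector field $S(x) := \phi_\xi(x, \nabla v(x))$, which by (C3)--(C4) together with hypothesis (i) is Lipschitz on $\Omega$, and whose divergence equals $\cL v$. The idea is to truncate $u$ from below by $v - m$ to produce an admissible competitor for \eqref{obstacle-problem-BC}, extract a convex inequality from minimality, and integrate the resulting inequality against $S$ via the Anzellotti pairing; the strict positivity of $\cL v$ will then force the contradiction.

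\textbf{Competitor and convex inequality.} Suppose $\{u < v\}$ has positive measure. By continuity and (ii) one can fix $m>0$ such that $A_m := \{v - u > m\}$ is a nonempty open set with $A_m \Subset \Omega$. Put $w := \max(u, v - m) = u + (v - m - u)_+$. Then $w \in BV(\Omega)$ coincides with $u$ in a neighborhood of $\partial\Omega$ (so $w|_{\partial\Omega} = f$) and satisfies $w \geq u \geq \psi$, hence $w \in BV_f(\Omega)$. Since $v - m - u$ is continuous, Vol'pert's chain rule gives $Dw = \chi_{\Omega \setminus A_m}\, Du + \chi_{A_m}\,\nabla v\,dx$ as mutually singular measures, whence
\[
\int_\Omega \phi(x, Dw) \;=\; \int_{\Omega \setminus A_m}\phi(x, Du) \;+\; \int_{A_m}\phi(x, \nabla v)\,dx.
\]
Minimality of $u$ for \eqref{obstacle-problem-BC} then gives $\int_{A_m}\phi(x, Du) \leq \int_{A_m}\phi(x, \nabla v)\,dx$. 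The pointwise inequality $\phi(x, p) \geq \phi_\xi(x, q)\cdot p$ (a consequence of convexity together with \eqref{norm-property}) applied with $q = \nabla v(x)$ yields $\phi(x, Du) \geq S\cdot Du$ as measures on $A_m$, and $\phi(x, \nabla v) = S \cdot \nabla v$ identically, so
\[
\int_{A_m} S \cdot (Du - \nabla v\,dx) \;\leq\; 0.
\]

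\textbf{Integration by parts and contradiction.} Since $S$ is Lipschitz with $\nabla \cdot S = \cL v \in L^\infty(\Omega)$, the Anzellotti pairing on $A_m$ (using $u \in BV(\Omega)\cap C^0(\bar\Omega)$), subtracted from the classical identity for the $C^1$ function $v$, gives
\[
\int_{A_m} S \cdot (Du - \nabla v\,dx) \;=\; \int_{A_m}(v - u)\,\cL v\,dx \;+\; \int_{\partial A_m}(u - v)\,[S, \nu_{A_m}]\,d\cH^{n-1}.
\]
On $\partial A_m$ one has $u - v \equiv -m$ by continuity, so the boundary integral equals $-m\int_{A_m}\cL v\,dx$ by the divergence theorem, and
\[
\int_{A_m} S \cdot (Du - \nabla v\,dx) \;=\; \int_{A_m}(v - u - m)\,\cL v\,dx \;>\; 0,
\]
since $v - u - m > 0$ and $\cL v > 0$ on $A_m$. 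This contradicts the previous display, so $\{u<v\}$ is null and continuity gives $u \geq v$ in $\Omega$. The reversed-inequality case is handled symmetrically with competitor $w := \min(u, v + m)$; admissibility $w \geq \psi$ is automatic wherever $v \geq \psi$, and at any point with $\psi > v$ the obstacle already forces $u \geq \psi > v$, so nothing need be proved there. The main technical subtlety is the BV/chain-rule bookkeeping together with the Anzellotti pairing on $A_m$: both rest on the continuity of $v - u$ (which kills any jump contribution on $\partial A_m$ and pins the trace of $u$ there to $v - m$) and on the finite-perimeter property of $A_m$ obtained from the coarea formula applied to $v - u$.
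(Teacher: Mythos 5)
Your proof of the main case ($u\geq v$) is correct and follows the paper's route in all essentials: the same truncated competitor $\max(u,v-\mathrm{const})$, the same two norm facts ($p\cdot\phi_\xi(x,q)\leq\phi(x,p)$, which is Lemma \ref{norm-property2}, together with the Euler identity \eqref{norm-property}), the same calibration field $S=\phi_\xi(x,\nabla v)$, and the same exploitation of $\cL v>0$. The only real divergence is mechanical. The paper pairs $u-w$ with the compactly supported Lipschitz field $g=\eta\,\phi_\xi(x,Dv)$, where $\eta\in C_0^\infty(\Omega)$ equals $1$ on the bad set, and integrates by parts over all of $\Omega$; since $g$ has compact support there are no boundary terms, no traces, and no finite-perimeter property of any level set to verify. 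You instead run Gauss--Green on $A_m$ itself, which is why you must pick $m$ generically by coarea so that $A_m$ has finite perimeter, identify the trace of $u$ on the (reduced) boundary of $A_m$, and rule out singular parts of $Du$ charging $\partial A_m$. All of this can be made rigorous (continuity of $u-v$ plus the fact that the diffuse part of a $BV$ derivative does not charge a single level set), but it is strictly heavier machinery for the same estimate; the cutoff trick is worth remembering.

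The reversed case is where your write-up has a genuine flaw. If $\psi>v$ at some interior point, then $u\geq\psi>v$ there, so the asserted conclusion $u\leq v$ is simply false; ``nothing need be proved there'' is backwards --- what your observation actually shows is that the reversed half of the lemma cannot hold without an additional hypothesis such as $v\geq\psi$ in $\Omega$. That hypothesis is exactly what makes your competitor $\min(u,v+m)$ admissible (on $\{u>v+m\}$ one then has $v+m\geq\psi+m\geq\psi$), and it does hold for the upper barriers $w^+$ constructed in Lemma \ref{boundary-regularity} once $K$ is taken large relative to the H\"older data of $\psi$ and $f$. To be fair, the paper disposes of this case with the single word ``similarly'' and never records the needed compatibility between $v$ and $\psi$, so the gap is inherited rather than introduced; but your sentence presents the issue as resolved, when in fact it is an extra assumption that must be added to the statement for the reversed conclusion to be true at all.
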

\begin{proof}
Let $E=\{x\in\Omega:v(x)>u(x)+\e\}$ for some $\e>0$, and $w=\max(u,v-\e)$. Notice that $w\in BV(\Omega)\cap C^0(\bar\Omega)$, $w=u$ on $\partial\Omega$ and $w\geq\psi$.  
Now let $\eta\in C_0^\infty(\Omega)$ satisfy $\eta=1$ on $E$ and $0\leq\eta\leq1$ in $\Omega$. 
Set
$$g=\eta\phi_\xi(x,Dv),$$
so that $g\in[C_0^1(\Omega)]^n$. By Theorem 2.1 in \cite{Amar-Bellettini}, we can write  
\begin{align*}
\int_E(u-w)\nabla\cdot g\,dx&=\int_\Omega(u-w)\nabla\cdot g\,dx=-\int_\Omega g\cdot D(u-w)\,dx\\
&=-\int_E g\cdot D(u-w)=-\int_E g\cdot Du+\int_E g\cdot Dv\\
&=-\int_E \phi_\xi(x,Dv)\cdot \frac{Du}{|Du|}|Du|+\int_EDv\cdot\phi_\xi(x,Dv)\,dx\\
&\geq-\int_E\phi(x,\frac{Du}{|Du|})|Du|+\int_E\phi(x,Dv),
\end{align*}
where in the last line we use the norm properties in Lemma \ref{norm-property2} and relation \eqref{norm-property}. 
Since $u-w<0$ and $\nabla\cdot g>0$ in $E$ (condition $(iii)$), we have 
$$\int_E\phi(x,Du)=-\int_E\phi(x,\frac{Du}{|Du|})|Du|>\int_E\phi(x,Dw),$$
which violates the fact that $u$ is a minimal solution of \eqref{obstacle-problem-BC}.
\end{proof}

Here, we prove firstly the regularity of the solutions near the boundary.
\begin{lemma}\label{boundary-regularity}
Suppose $\Omega$ is a bounded, open subset of $\bR^n$ with $C^2$ boundary which the signed distance $d(\cdot)$ to $\partial\Omega$ satisfies the relation \eqref{barrier-condition}. Assume $f\in C^{0,\alpha}(\partial\Omega)$, and $\psi\in C^{0,\alpha/2}$ for some $0<\alpha\leq1$. 
If $u\in C^0(\overline\Omega)\cap BV(\Omega)$ is a solution of \eqref{obstacle-problem-BC}, then there exists positive constants $\delta$ and $C$ depending only on $\|f\|_{C^{0,\alpha}(\partial\Omega)}$, $\|\psi\|_{C^{0,\alpha/2}}$ and $\|u\|_{C^0(\bar\Omega)}$ such that 
\begin{equation*}
|u(x)-u(x_0)|\leq C|x-x_0|^{\alpha/2},
\end{equation*}
whenever $x_0\in\partial\Omega$ and $x\in\bar\Omega$ with $|x-x_0|<\delta$.
\end{lemma}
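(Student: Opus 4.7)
The plan is to derive the estimate via a barrier argument, applying the comparison principle of Lemma~\ref{barriers} (and its reverse form for the upper barrier). Fix $x_0\in\partial\Omega$ and write $U_\delta:=\Omega\cap B_\delta(x_0)$. In $U_\delta$ I will construct two barriers $v^{\pm}\in C^2(\Omega)\cap C^0(\overline\Omega)$ that sandwich $u$ from above and below and that differ from $f(x_0)$ by at most a constant multiple of $|x-x_0|^{\alpha/2}$. The natural ansatz is
\[
v^{\pm}(x)\;=\;f(x_0)\;\pm\;A\,|x-x_0|^{\alpha/2}\;\pm\;B\,d(x),
\]
with $A,B>0$ to be chosen and $d(\cdot)$ the signed distance to $\partial\Omega$, which is $C^2$ near $\partial\Omega$ by hypothesis. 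These functions are $C^2$ in $\Omega$ because $x_0\notin\Omega$, and continuous up to $\overline\Omega$. Outside $U_\delta$ I would extend $v^+$ (resp.\ $v^-$) by a large (resp.\ small) constant chosen larger than $\|u\|_{C^0(\overline\Omega)}+1$ in absolute value, so that the required global ordering on $\partial U_\delta$ is automatic along the ``inner'' piece $\partial B_\delta(x_0)\cap\Omega$.

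First I would fix $A$ to secure the boundary ordering and the obstacle compatibility. On $\partial\Omega\cap B_\delta(x_0)$, where $d=0$, the $\alpha$-Hölder continuity of $f$ combined with the elementary bound $|x-x_0|^{\alpha}\leq\text{diam}(\Omega)^{\alpha/2}|x-x_0|^{\alpha/2}$ yields $v^-\leq f\leq v^+$ provided $A\geq\|f\|_{C^{0,\alpha}(\partial\Omega)}\,\text{diam}(\Omega)^{\alpha/2}$. For the obstacle condition $v^+\geq\psi$ on $U_\delta$, I use $\psi(x)\leq\psi(x_0)+\|\psi\|_{C^{0,\alpha/2}}|x-x_0|^{\alpha/2}$ together with the compatibility $\psi(x_0)\leq f(x_0)$ to see that $A\geq\|\psi\|_{C^{0,\alpha/2}}$ is enough. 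Condition (i) of Lemma~\ref{barriers}, $|\nabla v^{\pm}|>0$, then holds because the two summands in $Dv^{\pm}$ point roughly into $\Omega$ with positive coefficients and do not cancel.

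The main obstacle is verifying the ellipticity $\mathcal{L}v^+<0$ and $\mathcal{L}v^->0$ in $U_\delta$. Two facts make this tractable. First, by the zero-homogeneity of $\phi_\xi(x,\cdot)$ in \eqref{norm-property}, one has $\phi_{\xi_i}(x,Dv^{\pm}(x))=\phi_{\xi_i}(x,\hat n^{\pm}(x))$, where $\hat n^{\pm}:=Dv^{\pm}/|Dv^{\pm}|$; this kills the apparent blow-up of $Dv^{\pm}$ at $x_0$ and shows that $\hat n^{\pm}$ smoothly interpolates between the radial direction $(x-x_0)/|x-x_0|$ (near $x_0$, where the $A$-term dominates) and $\pm Dd(x)$ (where the $B$-term dominates). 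Second, the barrier condition \eqref{barrier-condition}, together with the $C^2$-regularity of $d$ near $\partial\Omega$ and the $W^{2,\infty}$-regularity of $\phi$ from C3--C4, gives $-\sum_i\partial_{x_i}\phi_{\xi_i}(x,Dd(x))\geq c_0>0$ in a neighborhood of $\partial\Omega$ after possibly shrinking $\delta$. Splitting $U_\delta$ into a small ``radial'' subregion near $x_0$, handled by continuity of $\phi_\xi$ and compactness, and a ``tangential'' subregion where the $Bd$-term dominates the direction of $Dv^{\pm}$, I would choose $B$ large (depending on $A$, on the $W^{2,\infty}$-norm of $\phi$, and on the $C^2$-norm of $d$) and $\delta$ small so that the sign of $\mathcal{L}v^{\pm}$ is as required throughout $U_\delta$. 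This balancing is the most technical step of the proof.

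Once (a) the boundary ordering, (b) the sign of $\mathcal{L}v^{\pm}$, (c) the non-vanishing of $|\nabla v^{\pm}|$, and (d) $v^+\geq\psi$ are verified, Lemma~\ref{barriers} gives $u\geq v^-$ in $U_\delta$, and its reverse form yields $u\leq v^+$. Property (d) is precisely what is needed for the reverse comparison, since it ensures that the competitor $w=\min(u,v^++\varepsilon)$ used in the proof of Lemma~\ref{barriers} still satisfies $w\geq\psi$. Combining $v^-\leq u\leq v^+$ with the bound $|v^{\pm}(x)-f(x_0)|\leq A|x-x_0|^{\alpha/2}+B\,d(x)\leq (A+B\delta^{1-\alpha/2})|x-x_0|^{\alpha/2}$ (using $d(x)\leq|x-x_0|$) completes the proof with a constant $C$ depending only on the quantities listed in the statement.
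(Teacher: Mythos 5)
Your high-level strategy (two-sided barriers in $U=B(x_0,\delta)\cap\Omega$ plus Lemma~\ref{barriers}) is the same as the paper's, but your ansatz
$v^{\pm}=f(x_0)\pm A|x-x_0|^{\alpha/2}\pm B\,d(x)$
cannot satisfy the sign condition $\cL v^{+}<0<\cL v^{-}$, and this is exactly the step you deferred as ``the most technical''. Write $r=|x-x_0|$. The gradient of the radial term has magnitude $A\frac{\alpha}{2}r^{\alpha/2-1}\to\infty$ as $r\to0$, while $|B\,Dd|=B$ stays bounded, so for \emph{any} fixed $A,B$ the direction of $Dv^{+}$ tends to the radial direction $e_r=(x-x_0)/r$ near $x_0$, not to $Dd$. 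By the zero-homogeneity \eqref{norm-property}, $\cL v^{+}$ is then governed by the divergence of the radial field: in the Euclidean case $\phi(x,\xi)=|\xi|$ with $B=0$ one gets exactly $\cL v^{+}=\nabla\cdot\big(Dv^{+}/|Dv^{+}|\big)=(n-1)/r>0$, and for general $\phi$ condition C3 (positivity of $D^2_\xi\phi$ on $\xi^{\perp}$, together with its $(-1)$-homogeneity) forces
$\cL v^{+}\geq C(n-1)/r+O(r^{-\alpha/2})>0$
for small $r$; symmetrically $\cL v^{-}<0$ there. These are the \emph{opposite} signs of what Lemma~\ref{barriers} requires, and no choice of $A$, $B$, $\delta$, nor any ``continuity and compactness'' argument in your radial subregion, can repair this: the offending term blows up like $1/r$ while the helpful term coming from $B\,Dd$ and the barrier condition \eqref{barrier-condition} is bounded. (Restricting attention to a region away from $x_0$ does not help either, because the set $\{u>v^{+}+\e\}$ may approach $x_0$ as $\e\to0$; ruling that out is tantamount to the estimate you are trying to prove.)

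The paper's construction avoids this trap by never letting the H\"older term control the gradient direction: it takes $v=|x-x_0|^2+\lambda d(x)$ and $w^{\pm}=f(x_0)\pm K v^{\alpha/2}$. Since $t\mapsto Kt^{\alpha/2}$ is increasing, zero-homogeneity \eqref{norm-property} gives $\phi_\xi(x,Dw^{+})=\phi_\xi(x,Dv)=\phi_\xi\big(x,\tfrac{2}{\lambda}(x-x_0)+Dd\big)$, where now the radial contribution $\tfrac{2}{\lambda}(x-x_0)$ is \emph{uniformly small}; hence for $\lambda$ large, $\cL w^{+}$ is uniformly close to $\sum_i\partial_{x_i}\phi_{\xi_i}(x,Dd)<0$ by \eqref{barrier-condition}, and the exponent $\alpha/2$ is recovered at the end from $d(x)\leq|x-x_0|$, which yields $Kv^{\alpha/2}\leq K(\delta+\lambda)^{\alpha/2}|x-x_0|^{\alpha/2}$. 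One point in your proposal is a genuine refinement rather than a flaw: you are right that the reverse (upper-barrier) form of Lemma~\ref{barriers} needs the competitor $\min(u,v^{+}+\e)$ to respect the obstacle, hence needs $v^{+}\geq\psi$ in $U$; the paper leaves this unstated, and it is the real reason the hypothesis $\psi\in C^{0,\alpha/2}$ enters. Keep that observation, but it must be verified for the corrected barrier $w^{+}$ above (it holds, using $\psi\leq f$ on $\partial\Omega$, the H\"older bounds on $f$ and $\psi$, and $d(x)\leq|x-x_0|$, after enlarging $K$ and $\lambda$); it cannot substitute for the failed ellipticity of your ansatz.
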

\begin{proof}
For each $x_0\in\partial\Omega$ we will construct functions $w^+,w^-\in C^2(U)\cap C^0(\bar U)$ where $U=B(x_0,\delta)\cap\Omega$ for some $\delta>0$  is to be determined later, such that
\begin{itemize}
\item[(i)] $w^+(x_0)=w^-(x_0)=f(x_0)$,
\item[(ii)] $|w^+(x)-f(x_0)|\leq C|x-x_0|^{\alpha/2}$ and $|w^-(x)-f(x_0)|\leq C|x-x_0|^{\alpha/2}$ for every $x\in U$. 
\item[(iii)] $|\nabla w^+|>0$ and $|\nabla w^-|>0$ in $U$.
\item[(iv)] $w^-\leq u\leq w^+$ on $\partial U$.
\item[(v)] $\cL w^+<0<\cL w^-$ in $U$.
\end{itemize}
By applying Lemma \ref{barriers} to $w^+$ and $w^-$, we obtain the inequality $w^-\leq u\leq w^+$ in $U$.
This accomplishes the proof by the property (ii).

In order to construct the function $w^+$, notice that $d\in C^2(\{x:0\leq d(x)<\delta_0\})$ for some $\delta_0>0$, because $\partial\Omega\in C^2$. 
We choose $\delta<\delta_0$ and let
\begin{align*}
&v(x)=|x-x_0|^2+\lambda d(x),\\
&w^+(x)=Kv^{\alpha/2}(x)+f(x_0),
\end{align*}
where $K$ and $\lambda$ are to be determined. Obviously (i) and (ii) are valid. To establish (iii), observe that
\begin{align*}
|\nabla w^+|&=K\frac\alpha2v^{\frac\alpha2-1}|\nabla v|,\\
|\nabla v|&=|2(x-x_0)+\lambda\nabla d|\geq\lambda|\nabla d|-2|x-x_0|\\
&\geq\lambda-2|x-x_0|>0,
\end{align*}
provided $\lambda>2\delta$. We also have used the fact that $|\nabla d|=1$ in the last relation. 

For (iv) on $\partial B(x_0,\delta)\cap\Omega$, we have $w^+(x)\geq K\delta^\alpha\geq\|u\|_{C^0(\bar\Omega)}$ if $K$ is chosen large enough. On $\partial\Omega\cap B(x_0,\delta)$ we have 
$$u(x)=f(x)\leq f(x_0)+ \|f\|_{C^{0,\alpha}}|x-x_0|^\alpha\leq w^+(x),$$
provided $K\geq \|f\|_{C^{0,\alpha}(\partial\Omega)}$.

To establish (v), we note that $\phi_\xi(x,tp)=\phi_\xi(x,p)$ and $Dw^+=K\frac\alpha2v^{\frac\alpha2-1}Dv$, then
\begin{align*}
\cL w^+=&\textsl{div}_x(\phi_\xi(x, Dv))=\textsl{div}_x(\phi_\xi(x, 2(x-x_0)+\lambda Dd(x)))\\
=&\textsl{div}_x(\phi_\xi(x, \frac2\lambda(x-x_0)+ Dd(x))).
\end{align*}
Since $d$ is $C^2$ near the boundary $\partial\Omega$ and satisfies the relation \eqref{barrier-condition}, then for a large value of $\lambda$, we will have uniformly $\cL w^+<0$ in the $\delta$-neighborhood of the boundary.

A similar construction provides function $w^-(x)=-Kv^{\alpha/2}(x)+f(x_0)$ for a suitable positive constant $K$.
\end{proof}

\begin{proof}[Proof of Theorem \ref{Holder regularity}]
For $s<t$, consider the supersets $E_s$, $E_t$ of $u$ and assume that $\textsl{dist}(\partial E_s,\partial E_t)=|x-y|$ where $x\in E_t$ and $y\in E_s$. 
 It is sufficient to show that $|u(x)-u(y)|=|t-s|\leq C|x-y|^{\alpha/2}$ whenever $|x-y|<\delta$, where $\delta$ is given by Lemma \ref{boundary-regularity}.
Observe that $O_t\subset E_t\Subset E_s$. By Proposition \ref{minimizer of distance},  we just have two following cases:

(i) If either $x$ or $y$ belongs to $\partial\Omega$, then our result follows from Lemma \ref{boundary-regularity}.

(ii) $x\in\partial E_t\cap\bar O_t$, then $u(x)=\psi(x)$ and $u(y)\geq\psi(y)$, so
$$0<t-s=u(x)-u(y)\leq\psi(x)-\psi(y)\leq [\psi]_{0,\alpha/2}|x-y|^{\alpha/2}.$$
\end{proof}

\begin{proof}[Proof of Theorem \ref{Lipschitz regularity}]
We just need to modify Lemma \ref{boundary-regularity}, to construct functions  $w^+$ and $w^-$ satisfies the conditions (i)-(v), but in (ii) we must replace 
$$|w^{\pm}(x)-f(x_0)|\leq C|x-x_0|^{\frac{1+\alpha}2}.$$
For this, put 
\begin{equation}\label{lipschitz-eq1}
w^+(x):=Kv^{\frac{1+\alpha}2}(x)+\nabla f(x_0)\cdot(x-x_0)+f(x_0),
\end{equation}
and notice that on $\partial\Omega\cap B(x_0,\delta)$, by the $C^{1,\alpha}$ regularity of $f$ there is a positive constant $C_1$ such that the following inequality is established
$$u(x)=f(x)\leq f(x_0)+\nabla f(x_0)\cdot(x-x_0)+C_1|x-x_0|^{\frac{1+\alpha}2}.$$
Therefore, the relation (iv), $u\leq w^+$, will be obtained provided $K\geq C_1$. The rest of the proof is exactly the same.
\end{proof}
\vspace{.3cm}
\noindent


\end{document}